\newtheorem{thm}{Theorem}[section]
\newtheorem{corollary}[thm]{Corollary}
\newtheorem{lemma}[thm]{Lemma}
\newtheorem{proposition}[thm]{Proposition}
\newtheorem{claim}[thm]{Claim}
\newtheorem{theorem}[thm]{Theorem}
\theoremstyle{definition}
\newtheorem{definition}[thm]{Definition}
\newtheorem{example}[thm]{Example}
\newtheorem{conjecture}[thm]{Conjecture}
\newtheorem{remark}[thm]{Remark}
\numberwithin{equation}{section}
\newenvironment{acknowledgements}{%
  \begin{abstract}
}{%
  \end{abstract}
}
\newcommand{\rk}{{\rm rk}}
\newcommand{\Cf}{{\tt CnFct}(B_3)}
\newcommand{\CF}{{\tt CnFct}(B_4)}
\newcommand{\CFn}{{\tt CnFct}(B_n)}
\newcommand{\LCF}{{\tt LCF}}
\newcommand{\BG}{{\tt BGen}}
\newcommand{\red}{{\tt red}}
\newcommand{\Red}{{\tt Red}}
\newcommand{\s}{{\tt SS}}
\newcommand{\SSS}{{\tt SSS}}
\newcommand{\USS}{{\tt USS}}
\newcommand{\SL}{{\tt SL}}
\newcommand{\sel}{{\tt sl}}
\newcommand{\D}{{\mathcal D}}
\newcommand{\nb}{{\tt nb}}
\providecommand\st{}
\newcommand\RelSymbol[1][]{%
\nonscript\:#1\vert
\allowbreak
\nonscript\:
\mathopen{}}
\DeclarePairedDelimiterX\GenRels[1]\langle\rangle{%
\renewcommand\st{\RelSymbol[\delimsize]}
#1}
\newcommand{\tik}[1]{%
  \begin{tikzpicture}[baseline=-\dimexpr\fontdimen22\textfont2\relax]
  #1
  \end{tikzpicture}%
}
\newcommand{\aone}{%
  \tik{
    \coordinate (1) at (0,-.15);
\coordinate (2) at (.25,-.15);
\coordinate (3) at (.25,.1);
\coordinate (4) at (0,.1);
\foreach \i/\Position in {1/below, 2/below, 3/above, 4/above}{
    \fill (\i) circle (1pt) node [\Position] {};}
    \draw (1)--(2)--cycle;}%
}
\newcommand{\atwo}{%
  \tik{
    \coordinate (1) at (0,-.15);
\coordinate (2) at (.25,-.15);
\coordinate (3) at (.25,.1);
\coordinate (4) at (0,.1);
\foreach \i/\Position in {1/below, 2/below, 3/above, 4/above}{
    \fill (\i) circle (1pt) node [\Position] {};}
    \draw (2)--(3)--cycle;}%
}
\newcommand{\athree}{%
  \tik{
    \coordinate (1) at (0,-.15);
\coordinate (2) at (.25,-.15);
\coordinate (3) at (.25,.1);
\coordinate (4) at (0,.1);
\foreach \i/\Position in {1/below, 2/below, 3/above, 4/above}{
    \fill (\i) circle (1pt) node [\Position] {};}
    \draw (3)--(4)--cycle;}%
}
\newcommand{\afour}{%
  \tik{
    \coordinate (1) at (0,-.15);
\coordinate (2) at (.25,-.15);
\coordinate (3) at (.25,.1);
\coordinate (4) at (0,.1);
\foreach \i/\Position in {1/below, 2/below, 3/above, 4/above}{
    \fill (\i) circle (1pt) node [\Position] {};}
    \draw (4)--(1)--cycle;}%
}
\newcommand{\e}{%
  \tik{
    \coordinate (1) at (0,-.15);
\coordinate (2) at (.25,-.15);
\coordinate (3) at (.25,.1);
\coordinate (4) at (0,.1);
\foreach \i/\Position in {1/below, 2/below, 3/above, 4/above}{
    \fill (\i) circle (1pt) node [\Position] {};}}%
}
\newcommand{\bone}{%
  \tik{
    \coordinate (1) at (0,-.15);
\coordinate (2) at (.25,-.15);
\coordinate (3) at (.25,.1);
\coordinate (4) at (0,.1);
\foreach \i/\Position in {1/below, 2/below, 3/above, 4/above}{
    \fill (\i) circle (1pt) node [\Position] {};}
    \draw (1)--(3)--cycle;}%
}
\newcommand{\btwo}{%
  \tik{
    \coordinate (1) at (0,-.15);
\coordinate (2) at (.25,-.15);
\coordinate (3) at (.25,.1);
\coordinate (4) at (0,.1);
\foreach \i/\Position in {1/below, 2/below, 3/above, 4/above}{
    \fill (\i) circle (1pt) node [\Position] {};}
    \draw (2)--(4)--cycle;}%
}
\newcommand{\atwoaone}{%
  \tik{
    \coordinate (1) at (0,-.15);
\coordinate (2) at (.25,-.15);
\coordinate (3) at (.25,.1);
\coordinate (4) at (0,.1);
\foreach \i/\Position in {1/below, 2/below, 3/above, 4/above}{
    \fill (\i) circle (1pt) node [\Position] {};}
    \draw (1)--(2)--(3)--(1)--cycle;}%
}
\newcommand{\athreeatwo}{%
  \tik{
    \coordinate (1) at (0,-.15);
\coordinate (2) at (.25,-.15);
\coordinate (3) at (.25,.1);
\coordinate (4) at (0,.1);
\foreach \i/\Position in {1/below, 2/below, 3/above, 4/above}{
    \fill (\i) circle (1pt) node [\Position] {};}
    \draw (2)--(3)--(4)--(2)--cycle;}%
}
\newcommand{\afourathree}{%
  \tik{
    \coordinate (1) at (0,-.15);
\coordinate (2) at (.25,-.15);
\coordinate (3) at (.25,.1);
\coordinate (4) at (0,.1);
\foreach \i/\Position in {1/below, 2/below, 3/above, 4/above}{
    \fill (\i) circle (1pt) node [\Position] {};}
    \draw (1)--(3)--(4)--(1)--cycle;}%
}
\newcommand{\aoneafour}{%
  \tik{
    \coordinate (1) at (0,-.15);
\coordinate (2) at (.25,-.15);
\coordinate (3) at (.25,.1);
\coordinate (4) at (0,.1);
\foreach \i/\Position in {1/below, 2/below, 3/above, 4/above}{
    \fill (\i) circle (1pt) node [\Position] {};}
    \draw (1)--(2)--(4)--(1)--cycle;}%
}
\newcommand{\aoneathree}{%
  \tik{
    \coordinate (1) at (0,-.15);
\coordinate (2) at (.25,-.15);
\coordinate (3) at (.25,.1);
\coordinate (4) at (0,.1);
\foreach \i/\Position in {1/below, 2/below, 3/above, 4/above}{
    \fill (\i) circle (1pt) node [\Position] {};}
    \draw (1)--(2)--cycle;
    \draw (3)--(4)--cycle;}%
}
\newcommand{\atwoafour}{%
  \tik{
    \coordinate (1) at (0,-.15);
\coordinate (2) at (.25,-.15);
\coordinate (3) at (.25,.1);
\coordinate (4) at (0,.1);
\foreach \i/\Position in {1/below, 2/below, 3/above, 4/above}{
    \fill (\i) circle (1pt) node [\Position] {};}
    \draw (2)--(3)--cycle;
    \draw (1)--(4)--cycle;}%
}
\newcommand{\delt}{%
  \tik{
    \coordinate (1) at (0,-.15);
\coordinate (2) at (.25,-.15);
\coordinate (3) at (.25,.1);
\coordinate (4) at (0,.1);
\foreach \i/\Position in {1/below, 2/below, 3/above, 4/above}{
    \fill (\i) circle (1pt) node [\Position] {};}
    \draw (1)--(2)--(3)--(4)--cycle;}%
}
\begin{document}
\title{On the negative band number}
\author{Michele Capovilla-Searle}
\address{Department of Mathematics, University of Iowa, Iowa City, IA 52242}
\email{michele-capovilla-searle@uiowa.edu}

\author{Tetsuya Ito}
\address{Department of Mathematics, Graduate School of Science, Kyoto University, Kyoto 606-8502, Japan}
\email{tetitoh@math.kyoto-u.ac.jp}

\author{Keiko Kawamuro}
\address{Department of Mathematics, University of Iowa, Iowa City, IA 52242}
\email{keiko-kawamuro@uiowa.edu}

\author{Rebecca Sorsen}
\address{Department of Mathematics, College of St. Mary, Omaha, NE 68106-2377}
\email{rsorsen@csm.edu}

\maketitle
\newcolumntype{P}[1]{>{\centering\arraybackslash}p{#1}}

\begin{abstract}
We study the negative band number of braids, knots, and links using Birman, Ko, and Lee's left-canonical form of a braid. 
As applications,  we characterize up to conjugacy strongly quasipositive braids and almost strongly quasipositive braids. 
\end{abstract}

\section{Introduction}

The Artin braid group $B_n$  \cite{Artin} is defined by the presentation: 
\[
B_n= \GenRels*{ \sigma_1,\dots,\sigma_{n-1} \st
\begin{medsize}
\begin{aligned} & \sigma_i\sigma_j=\sigma_j\sigma_i,\enspace |i-j|>1 \\[-0.5ex] %
 & \sigma_i\sigma_{i+1}\sigma_i=\sigma_{i+1}\sigma_i\sigma_{i+1}, \enspace i=1,\dots,n-2%
\end{aligned}
\end{medsize}
} 
\]
If we view an element of $B_n$ as braided $n$ strands, the Artin generator $\sigma_i$ exchanges  adjacent $i$th and $i+1$st strands. In this paper we adopt the so called band generators (or Birman-Ko-Lee generators \cite{Birman}) denoted $a_{ij}$, which exchanges $i$th and $j$th strands, as defined in Section \ref{Sec2}. 
We call $a_{ij}$ a {\em positive} band and $a_{ij}^{-1}$ a {\em negative} band. 
We introduce the {\em negative band number} as follows: 
\begin{definition}\label{def-of-m}
For an $n$-braid word $W$ in band generators the negative band number $\nb(W)$ is the number of negative bands that appear in the word $W$. 
For a braid $\beta\in B_n$ we define the negative band number by
$$
\nb(\beta)=\min\left\{ \nb(W) \, \middle\vert \ 
W \text{ represents } \beta
\right\}.
$$
Similarly for the conjugacy class $[\beta]$ of $\beta$ we define:
$$
\nb([\beta])=\min\{\nb(\beta') \mid \text{ $\beta'$ is conjugate to $\beta$ } \}
$$
For a knot or link $K \subset S^3$ we define:
$$
\nb(K)=\min\left\{ \nb(\beta) \mid 
\text{ $\beta$ is a braid representative of $K$ }
\right\}
$$
By the definition we have $0\leq \nb(K) \leq \nb([\beta]) \leq \nb(\beta)\leq \nb(W).$
\end{definition}

Counting the number of negative bands gives a filtration on the braid group: 
Define ${\mathcal F}_i \subset B_n$ to be the set of braids $\beta \in B_n$ with $\nb(\beta)\leq i$. 
Let $P=P(n)\subset B_n$ be the monoid of positive braids in the Artin generators. 
We have ${\mathcal F}_j \cdot {\mathcal F}_k \subset {\mathcal F}_{j+k}$ and a filtration 
$$P \subset {\mathcal F}_0 \subset {\mathcal F}_1 \subset {\mathcal F}_2 \subset {\mathcal F}_3 \subset \cdots \subset \bigcup_{i=0}^\infty {\mathcal F}_i=B_n.$$
The set $\mathcal F_0$ forms a monoid in $B_n$. Elements of $\mathcal F_0$ are called {\em strongly quasipositive} (SQP) braids, originally defined and studied intensively by Rudolph.
We call elements of $\mathcal F_1$ {\em almost strongly quasipositive} (ASQP) braids, and elements in $\mathcal F_1 \setminus \mathcal F_0$ {\em strictly} ASQP braids. 
We also note that in \cite{Etnyre-VHMorris} Etnyre and VanHorn Morris study inclusions of monoids $P(n)\subset SQP(n) =\mathcal F_0 \subset QP(n) \subset B_n$, where $QP(n)$ is the monoid of quasipositive $n$-braids.

Our first goal is to characterize SQP and ASQP braids in terms of the Birman-Ko-Lee left-canonical form. 

Recall that the BKL left-canonical form gave a polynomial-time solution to the word problem and the conjugacy problem: Given two braid words $W$ and $W'$ determining whether $W=W'$ in $B_n$ is called the word problem, and determining whether $W$ and $W'$ are conjugate in $B_n$, is called the conjugacy problem. These problems have been studied and solved by a number of people, including Artin \cite{Artin}, Garside \cite{Garside}, Elrifai and Morton \cite{ElrifaiMorton}, Xu \cite{Xu}, Kang, Ko, and Lee \cite{KangKoLee}, and Birman, Ko and Lee \cite{Birman}. 
In \cite{Birman} they showed that 
for any braid $\beta \in B_n$, there exist unique $r\in\mathbb Z$, unique $k\in \mathbb Z_{\geq 0}$ and unique canonical factors $A_1,\dots,A_k$ such that 
\begin{itemize}
\item
    $\beta=\delta^rA_1 A_2\cdots A_k$ as braid elements in $B_n$ and
\item
every consecutive word $A_iA_{i+1}$ is maximally left weighted. 
\end{itemize}
The unique factorization $\delta^rA_1 A_2\cdots A_k$ of $\beta$ is called the {\em left-canonical form} of $\beta$ and is denoted by $\LCF(\beta)$. It implies that $W=W'$ in $B_n$ if and only if $\LCF(W)=\LCF(W')$.

The unique integers $r$ and $r+k$ in $\LCF(\beta)$ are called $\inf(\beta)$  and $\sup(\beta)$,  respectively. Let $\s([\beta])$ and $\SSS([\beta])$ denote the summit set and the super summit set, respectively of the conjugacy class $[\beta]$ of $\beta$.
Both sets are finite. 

Here are our characterizations of SQP and ASQP braids. 

\begin{lemma}[Lemma~\ref{cor:sqpconj}, Corollary~\ref{cor:SQP problem}]
Let $n\geq 3$. 
A braid $\beta\in B_n$ is conjugate to a SQP braid,
if and only if every element $\beta' \in \SSS([\beta])$ has $\inf(\beta')\geq 0$, 
if and only if there exists an element $\beta' \in \SSS([\beta])$ with $\inf(\beta')\geq 0$.

Thus, given an $n$-braid one can determine whether it is conjugate to a SQP braid in polynomial-time.
\end{lemma}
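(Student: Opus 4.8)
The plan is to translate strong quasipositivity into the language of the Birman--Ko--Lee Garside structure, where it becomes the single condition $\inf\geq 0$, and then to read off the three equivalences from the standard behaviour of $\inf$ on the super summit set.

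First I would establish the bridge: for any $\gamma\in B_n$, the braid $\gamma$ is SQP if and only if $\inf(\gamma)\geq 0$. By Definition~\ref{def-of-m}, $\gamma$ is SQP exactly when $\nb(\gamma)=0$, i.e.\ when $\gamma$ admits a band-generator word using only positive bands; this says precisely that $\gamma$ lies in the positive BKL monoid. In the BKL Garside structure the positive monoid coincides with $\{\gamma : \inf(\gamma)\geq 0\}$. Indeed, if $\inf(\gamma)=r\geq 0$ then $\LCF(\gamma)=\delta^r A_1\cdots A_k$ exhibits $\gamma$ as a product of the positive braids $\delta$ and the canonical factors $A_i$, so $\nb(\gamma)=0$; conversely any positive band word is a product of simple elements and hence has $\inf\geq 0$. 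This is exactly the equivalence I want.

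Next I would recall the defining property of the super summit set from Garside theory: every $\beta'\in\SSS([\beta])$ realizes the maximal infimum over the entire conjugacy class, so $\inf$ is constant on $\SSS([\beta])$ and equals $\inf_s([\beta]):=\max\{\inf(\gamma) \mid \gamma \text{ conjugate to } \beta\}$. Because this common value is genuinely attained on $\SSS([\beta])$, the condition ``every $\beta'\in\SSS([\beta])$ has $\inf(\beta')\geq 0$'' and the condition ``some $\beta'\in\SSS([\beta])$ has $\inf(\beta')\geq 0$'' are literally the same statement, namely $\inf_s([\beta])\geq 0$; this disposes of the second equivalence for free. For the first equivalence I would argue that $\beta$ is conjugate to an SQP braid iff some conjugate $\gamma$ is SQP, iff some conjugate has $\inf(\gamma)\geq 0$ (by the bridge), iff $\inf_s([\beta])\geq 0$ (since $\inf_s$ is the maximum of $\inf$ over the class). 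When this holds, the super summit representatives are themselves SQP, so they provide an explicit SQP conjugate.

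For the algorithmic corollary I would use that a single element of $\SSS([\beta])$ is computable in polynomial time by iterated cycling and decycling from the BKL normal form; having produced one such $\beta'$, one reads $\inf(\beta')$ off its left-canonical form and tests $\inf(\beta')\geq 0$, which by the equivalence decides conjugacy to an SQP braid. The Garside bookkeeping above is routine, and the one external input carrying real weight is the polynomial-time reachability of the super summit set, which I would cite rather than reprove; the only genuine verification specific to this paper is the bridge lemma identifying SQP with $\inf\geq 0$, and although short, it is the step on which the whole characterization rests.
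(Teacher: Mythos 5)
Your proposal is correct and follows essentially the same route as the paper: the ``bridge'' identifying SQP braids with $\{\gamma : \inf(\gamma)\geq 0\}$ is exactly Lemma~\ref{Thm:SQ}, the constancy of $\inf$ on $\SSS([\beta])$ at the value $\inf([\beta])$ is Proposition~\ref{prop:sss}, and the polynomial-time claim rests on the same citation that one element of $\SSS([\beta])$ is reachable by cycling and decycling in polynomial time. No substantive differences to report.
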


\begin{theorem}[Theorem~\ref{theorem:summit}, Corollary~\ref{cor:ASQP problem}]
A braid $\beta \in B_n$ is conjugate to an ASQP braid but not conjugate to a SQP braid if and only if these exists $\beta' \in \s([\beta])$ such that $\inf(\beta')=-1$ and its left-canonical form $\LCF(\beta')\equiv\delta^{-1}A_1\ldots A_{k}$ satisfies $||A_1||=n-2$.

Thus, given an $n$-braid one can determine whether it is conjugate to an ASQP braid but not to a SQP braid in exponential-time.
\end{theorem}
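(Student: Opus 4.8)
The plan is to prove the biconditional directly, producing in each direction an explicit conjugate that carries a single negative band and sits in the summit set. Throughout I would use the standard facts about the Birman--Ko--Lee Garside structure: the canonical factors are exactly the simple elements (the divisors of $\delta$); the function $\|\cdot\|$ is additive along a left‑weighted product, so $\|A\|+\|A^{-1}\delta\|=\|\delta\|=n-1$; every band $a_{ij}$ is both a left‑ and a right‑divisor of $\delta$; and among the simple elements the unique one with $\|A\|=n-1$ is $\delta$ itself, while those with $\|A\|=1$ are precisely the single bands.

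For the direction ($\Leftarrow$), suppose $\beta'\in\s([\beta])$ with $\LCF(\beta')\equiv\delta^{-1}A_1\cdots A_k$ and $\|A_1\|=n-2$. Since $A_1$ is simple, the right complement $R:=A_1^{-1}\delta$ is simple with $\|R\|=\|\delta\|-\|A_1\|=1$, hence $R=a_{pq}$ is a single band. Then $\delta^{-1}A_1=(A_1R)^{-1}A_1=R^{-1}=a_{pq}^{-1}$, so $\beta'=a_{pq}^{-1}A_2\cdots A_k$ is a band word with exactly one negative band; therefore $\nb([\beta])\le\nb(\beta')\le 1$ and $\beta$ is conjugate to an ASQP braid. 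That $\beta$ is \emph{not} conjugate to a SQP braid follows from the first Lemma: because $\beta'$ lies in the summit set, $\inf([\beta])=\inf(\beta')=-1<0$, so every element of $\SSS([\beta])$ has negative infimum.

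For the direction ($\Rightarrow$), assume $\beta$ is conjugate to an ASQP but not a SQP braid, so $\nb([\beta])=1$. Choosing a conjugate that realizes a single negative band and cyclically rewriting it, I would obtain $\gamma=a_{pq}^{-1}Q$ with $Q$ a positive band word. Setting $C:=\delta a_{pq}^{-1}$, which is simple with $\|C\|=n-2$ because $a_{pq}$ is a right‑divisor of $\delta$, gives $\gamma=\delta^{-1}(CQ)$ with $CQ$ positive, whence $\inf(\gamma)\ge -1$. On the other hand the first Lemma together with the not‑SQP hypothesis forces $\inf([\beta])\le -1$, and always $\inf(\gamma)\le\inf([\beta])$; squeezing yields $\inf(\gamma)=\inf([\beta])=-1$, so $\gamma\in\s([\beta])$ and $\LCF(\gamma)\equiv\delta^{-1}A_1\cdots A_k$ with $A_1\cdots A_k=CQ$. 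Finally I would identify $A_1$ with the maximal simple left‑divisor $\gcd(CQ,\delta)$: since $C$ is a simple left‑divisor of $CQ$, we get $C\preceq A_1$ and hence $\|A_1\|\ge\|C\|=n-2$, while $A_1\neq\delta$ (as $\inf(\gamma)=-1$) forces $\|A_1\|\le n-2$; thus $\|A_1\|=n-2$.

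The complement computations and the additivity/uniqueness statements for $\|\cdot\|$ are routine once the non‑crossing‑partition description of canonical factors is in hand; the genuinely load‑bearing step is the squeeze argument, which hinges on the claim that the explicit one‑negative‑band conjugate $\gamma$ already attains the summit infimum, i.e.\ that one cannot push $\inf$ above $-1$ without sacrificing the single negative band. The companion bookkeeping obstacle is showing that the leading canonical factor does not shrink when the positive tail $Q$ is appended, which is exactly what the $\gcd(CQ,\delta)$ characterization of $A_1$ supplies. The complexity corollary then follows: one computes the (possibly exponentially large) summit set $\s([\beta])$ and tests the displayed condition on each element, in contrast to the SQP case where a single super summit representative suffices.
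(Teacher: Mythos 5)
Your proof is correct and follows essentially the same route as the paper: both directions hinge on the same construction $\delta^{-1}A_1 = (\delta a_{pq}^{-1})^{-1}\cdot$(complement), with the forward direction writing an ASQP conjugate as $a^{-1}Q$, setting $A_1=\delta a^{-1}$ of length $n-2$, and forcing $\|A_1\|=n-2$ in the left-canonical form because a larger leading factor would be $\delta$ and make the braid SQP. The only cosmetic difference is that you identify the leading factor via the $\gcd(\cdot,\delta)$ characterization and a squeeze on $\inf$, whereas the paper argues directly that the pair $A_1A_2$ is already maximally left-weighted.
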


It is noteworthy that this ASQP problem is the first example where the summit set $\s([\beta])$ plays an essential role in computational braid theory.
 In contrast, the super summit set $\SSS([\beta])$ and its refinements like the ultra summit set $\USS([\beta])$ \cite{Gebhardt} or sliding circuits ${\tt SC}([\beta])$ \cite{GG-sliding-circuit} can be used to solve other algebraic problems in braid groups, such as, the Conjugacy problem, to compute centralizers \cite{FG-centralizer}, to find $n$th roots \cite{Lee} and possibly more. Furthermore, we can use super summit sets and its refinements based on Birman-Ko-Lee generators for geometric problems such as to determine the the Nielsen-Thurston type \cite{Ca}, or, to solve the SQP problem.

It is not known whether the super summit set or its refinements can be used to solve the ASQP problem. 
However,
for $3$- and $4$-braids, ASQP braids can be detected by using the super summit set, which is much smaller than the summit set:

\begin{theorem}[Theorem~\ref{thm:stronger-version}]\label{thm ASQP-intro}
A braid $\beta \in B_3$ is conjugate to an ASQP braid but not to a SQP braid if and only if 
every $\beta' \in \SSS([\beta])$ has $\inf(\beta')=-1$.

A braid $\beta \in B_4$ is conjugate to an ASQP braid but not to a SQP braid if and only if 
every $\beta' \in \SSS([\beta])$ has $\inf(\beta')=-1$ and its left-canonical form $\LCF(\beta')\equiv\delta^{-1}A_1\ldots A_{k}$ contains a canonical factor  $A_i$ with $||A_i||=2$. 
\end{theorem}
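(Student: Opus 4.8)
The plan is to deduce both statements from the general summit-set criterion (Theorem~\ref{theorem:summit}) by analysing how its two ingredients---the existence of $\gamma\in\s([\beta])$ with $\inf(\gamma)=-1$ and with first factor $\|A_1\|=n-2$---transfer to the \emph{super} summit set. Two standard facts drive everything. First, all elements of $\s([\beta])$, and of $\SSS([\beta])\subseteq\s([\beta])$, share the common summit infimum $\inf_s([\beta])=\max\{\inf(\alpha):\alpha\sim\beta\}$; hence ``$\exists\,\gamma\in\s([\beta])$ with $\inf(\gamma)=-1$'' is the same as ``$\inf_s([\beta])=-1$'', which is the same as ``every $\beta'\in\SSS([\beta])$ has $\inf(\beta')=-1$'' (this is also the mechanism behind Lemma~\ref{cor:sqpconj}). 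Second, the cycling operation restricts to a bijection of $\SSS([\beta])$ that rotates the canonical factors cyclically while applying the length-preserving automorphism $\tau$ (conjugation by $\delta$), so any canonical factor of an $\SSS$-element may be rotated into the first position without leaving $\SSS([\beta])$. Thus the infimum halves of both equivalences are immediate, and I only need to match the \emph{factor-length} conditions. I will show the refined right-hand side is equivalent to the condition of Theorem~\ref{theorem:summit}; the theorem then follows.

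For $B_3$ the factor condition is automatic. Since $\|\delta\|=n-1=2$, every canonical factor satisfies $1\le\|A_i\|\le n-2=1$, i.e. $\|A_i\|=1=n-2$. Hence whenever a summit representative has canonical length $\ge1$ the hypothesis $\|A_1\|=n-2$ of Theorem~\ref{theorem:summit} holds automatically, and the criterion collapses to $\inf_s([\beta])=-1$, which is exactly ``every $\beta'\in\SSS([\beta])$ has $\inf(\beta')=-1$''. The only gap is the degenerate class of canonical length $0$, where $\SSS([\beta])=\{\delta^{-1}\}$: the right-hand side holds but there is no factor $A_1$. This single periodic class I verify by hand---it is correctly excluded because the exponent sum forces $\nb(\delta^{-1})\ge n-1=2$, so $\delta^{-1}$ is not ASQP. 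Modulo this case the $B_3$ equivalence is immediate.

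For $B_4$ the factor condition is genuine, and the key is a bookkeeping identity. Let $e(\beta)$ denote the exponent sum (the image of $\beta$ under the abelianisation $B_n\to\mathbb Z$ sending every band to $1$), a conjugacy invariant. For a representative $\gamma=\delta^{-1}A_1\cdots A_k$ with $\inf(\gamma)=-1$ each factor has $\|A_i\|\in\{1,2\}$, and if $p$ is the number of length-$2$ factors then
\[
e(\beta)=e(\gamma)=-3+\textstyle\sum_{i=1}^{k}\|A_i\|=-3+k+p,\qquad\text{so}\qquad p=e(\beta)+3-k .
\]
On $\SSS([\beta])$ the quantities $e(\beta)$, the infimum $-1$, and the canonical length $k=\ell=\sup-\inf$ are all fixed, so $p=e(\beta)+3-\ell$ is \emph{the same} for every $\beta'\in\SSS([\beta])$; in particular ``some $\SSS$-element contains a length-$2$ factor'' and ``every $\SSS$-element does'' both reduce to $p\ge1$. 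To match this with Theorem~\ref{theorem:summit}: if the $\SSS$-elements carry a length-$2$ factor, cycle any $\beta'\in\SSS([\beta])$ within $\SSS([\beta])$ to rotate such a factor into first position, obtaining $\gamma\in\SSS([\beta])\subseteq\s([\beta])$ with $\inf(\gamma)=-1$ and $\|A_1\|=2=n-2$, whence $\beta$ is ASQP but not SQP. Conversely, if $\gamma\in\s([\beta])$ has $\inf(\gamma)=-1$ and $\|A_1(\gamma)\|=2$, then $\gamma$ has $p_\gamma\ge1$, and since the super summit set minimises the supremum we have $\ell\le k_\gamma$, so
\[
p_{\SSS}=e(\beta)+3-\ell\ \ge\ e(\beta)+3-k_\gamma=p_\gamma\ \ge\ 1 ,
\]
giving the $\SSS$ factor condition. (Here the length-$0$ class $[\delta^{-1}]$ needs no separate treatment, since it has no length-$2$ factor, so the right-hand side correctly fails.)

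The step I expect to be the main obstacle is the passage between the \emph{summit} set of Theorem~\ref{theorem:summit} and the \emph{super} summit set used here: rigorously justifying that cycling acts on $\SSS([\beta])$ as a length-preserving cyclic permutation of canonical factors (so a length-$2$ factor can truly be moved to the front without enlarging $\sup$), and confirming that the first-factor condition is equivalent to the existence of \emph{some} length-$2$ factor. The exponent-sum identity carries the converse direction cleanly by converting the factor count into the invariants $e(\beta)$ and $\ell$, but this is exactly where the hypothesis $n\le4$ is essential: for $B_4$ the only non-minimal factor length is the extremal $n-2=2$, so $\sum\|A_i\|$ pins down the number of length-$(n-2)$ factors, whereas for $n\ge5$ intermediate factor lengths exist and the count no longer detects the presence of a length-$(n-2)$ factor. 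The remaining care is bookkeeping: checking the periodic/degenerate classes directly and confirming that no class with $\inf_s([\beta])=-1$ and $\ell\ge1$ escapes the identity.
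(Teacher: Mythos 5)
Your treatment of the $B_4$ case is, at its core, the paper's own argument: the paper proves $(1)\Rightarrow(2)$ by assuming some $\beta'\in\SSS([\beta])$ has all factors of length $1$, computing $\ell(\beta')=k=3+e(\beta)$ from the exponent sum, and contrasting this with the summit element $\beta''$ supplied by Theorem~\ref{theorem:summit}, whose first factor of length $2$ forces $\ell(\beta'')\leq 2+e(\beta)<\ell(\beta')$, contradicting the minimality of canonical length on $\SSS([\beta])$. Your identity $p=e(\beta)+3-\ell$ together with $\ell\leq k_\gamma$ is exactly this computation, phrased directly instead of by contradiction, and your observation that $p$ is constant on $\SSS([\beta])$ gives $(2)\Leftrightarrow(3)$ a touch more cleanly than the paper's ``trivial.'' However, there are two genuine problems.

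First, your claim that cycling acts on $\SSS([\beta])$ as a ``length-preserving cyclic permutation of canonical factors,'' so that a length-$2$ factor can be rotated into first position, is false as stated: cycling sends $\delta^{r}A_1\cdots A_k$ to $\delta^{r}A_2\cdots A_k\tau^{-r}(A_1)$ only as a braid element, and the left-canonical form of the result must be recomputed, which can redistribute letters among factors. Fortunately this step is unnecessary: for the direction $(3)\Rightarrow(1)$ the paper simply invokes Proposition~\ref{cor:SSS-ASQP1}, which accepts a length-$(n-2)$ factor in \emph{any} position $i$ (the $\delta^{-1}$ is commuted past $A_1,\dots,A_{i-1}$ at the cost of applying $\tau$), so no rotation to the front is needed. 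You should delete the cycling argument and cite that proposition. Second, your handling of the $B_3$ degenerate class $[\delta^{-1}]$ is internally inconsistent: you correctly observe that every element of $\SSS([\delta^{-1}])=\{\delta^{-1}\}$ has infimum $-1$ (so the stated right-hand side of the $B_3$ criterion \emph{holds}) while $\delta^{-1}$ is not ASQP because its writhe forces $\nb\geq 2$ (so the left-hand side \emph{fails}) --- and then declare the class ``correctly excluded.'' It is not excluded; as you have set things up, it is a counterexample to the literal $B_3$ statement unless one additionally assumes $\ell([\beta])\geq 1$. (The body of the paper only proves the $B_4$ case, where the requirement of a length-$2$ factor automatically rules this class out, so you have in fact put your finger on a real edge case in the $B_3$ statement rather than resolved it; you need to either add the hypothesis or say explicitly that the statement requires it.)
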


The if direction of Theorem~\ref{thm ASQP-intro} is true for all $n\geq 5$ (Proposition~\ref{cor:SSS-ASQP1}), but the only-if-direction does not hold in general as discussed in Remark~\ref{remark:wrong}. 
In stead we conjecture the following: 

\begin{conjecture}\label{conj:ASQP}
Let $n \geq 5$. If a braid $\beta\in B_n$ is conjugate to an ASQP braid but not conjugate to a SQP braid then there exists a $\beta' \in \SSS(\beta)$ such that $\inf(\beta')=-1$ and its left-canonical form $\LCF(\beta')\equiv\delta^{-1}A_1\ldots A_{k}$ contains a canonical factor  $A_i$ with $||A_i||=n-2$. 
\end{conjecture}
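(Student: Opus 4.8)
The plan is to deduce the conjecture from the already established summit-set characterization (Theorem~\ref{theorem:summit}) by propagating a canonical factor of norm $n-2$ out of the summit set and into the super summit set along a chain of decyclings.

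First I would reduce the statement to a purely combinatorial claim about canonical factors. Suppose $\beta$ is conjugate to an ASQP braid but not to a SQP braid. By the SQP characterization (Lemma~\ref{cor:sqpconj}), no element of $\SSS([\beta])$ has nonnegative infimum; since the super summit set and the summit set share the common maximal infimum of the class, this maximal (summit) infimum is negative. On the other hand, Theorem~\ref{theorem:summit} produces $\gamma\in\s([\beta])$ with $\inf(\gamma)=-1$, and being a summit element $\gamma$ realizes the summit infimum, which is therefore exactly $-1$. Consequently every element of $\SSS([\beta])\subseteq\s([\beta])$ already has infimum $-1$, so the infimum condition in Conjecture~\ref{conj:ASQP} is automatic. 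What remains is to exhibit a single super summit representative that still carries a canonical factor $A_i$ with $||A_i||=n-2$.

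Next I would invoke the standard fact that $\SSS([\beta])$ is reached from any summit element by iterated decycling, a process that fixes the infimum at its summit value $-1$ and weakly decreases the supremum. Starting from the $\gamma$ of Theorem~\ref{theorem:summit}, whose first factor $A_1$ is a co-atom of the lattice of canonical factors (a norm-$(n-2)$ factor is precisely $\delta$ with one band removed, so that $\delta^{-1}A_1$ is a single negative band — exactly the computation behind the if-direction, Proposition~\ref{cor:SSS-ASQP1}), the task becomes to track this co-atom through each decycling and its subsequent left-renormalization, and to show that a norm-$(n-2)$ factor persists, or is regenerated, all the way into $\SSS([\beta])$.

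I expect the band-redistribution during left-renormalization to be the main obstacle. When the co-atom interacts with its neighbor under a decycling move, the left-weighting can push bands across the junction and, a priori, replace a norm-$(n-2)$ factor and its neighbor by two factors of strictly smaller norm, destroying every co-atom at once; this is precisely why the naive ``every super summit element'' form of the only-if direction fails for $n\geq 5$ (Remark~\ref{remark:wrong}). Any successful argument must therefore be existential and must use the global constraint $\nb([\beta])=1$ rather than a factor-by-factor estimate. Concretely, I would try to prove a sharp lower bound of the shape: if $\LCF(\beta')\equiv\delta^{-1}A_1\cdots A_k$ with $||A_i||\leq n-3$ for every $i$, then $\nb(\beta')\geq 2$. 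Combined with the fact that decycling is a conjugation and hence cannot raise $\nb([\beta])$, such a bound — applied to the super summit elements reached from $\gamma$ — would force at least one surviving norm-$(n-2)$ factor and close the argument. The delicate point, and the reason the statement remains conjectural for $n\geq 5$, is that controlling the evolution of the factor norms along the entire decycling orbit appears to require finer noncrossing-partition combinatorics of co-atoms than the local lattice structure alone provides.
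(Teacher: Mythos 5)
The statement you are trying to prove is Conjecture~\ref{conj:ASQP}: the paper offers no proof of it, and explicitly leaves it open (the text preceding it notes that the only-if direction of the $\SSS$-characterization fails in general for $n\geq 5$, citing Remark~\ref{remark:wrong}). So there is no argument in the paper to compare yours against, and your proposal does not supply one either. The first part of your reduction is sound and matches what the paper's results already give: by Lemma~\ref{cor:sqpconj} and Proposition~\ref{prop:sss} the summit infimum is negative, Theorem~\ref{theorem:summit} pins it at $-1$, and since $\SSS([\beta])\subseteq\s([\beta])$ every super summit element has infimum $-1$; likewise your observation that a norm-$(n-2)$ factor $A_1$ satisfies $\delta^{-1}A_1=C^{-1}$ for a single band $C$ is exactly the computation in Proposition~\ref{cor:SSS-ASQP1}. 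But everything after that is a plan, not a proof, and you say so yourself: the step of tracking a co-atom through decycling and left-renormalization is precisely the open content of the conjecture.

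Moreover, your proposed fallback would not close the argument even if the auxiliary bound were established. You suggest proving that $\LCF(\beta')\equiv\delta^{-1}A_1\cdots A_k$ with all $||A_i||\leq n-3$ forces $\nb(\beta')\geq 2$, and then deriving a contradiction from $\nb([\beta])=1$. The contradiction does not follow: $\nb$ of an individual conjugate can strictly exceed $\nb([\beta])$ (Definition~\ref{def-of-m} only gives $\nb([\beta])\leq\nb(\beta')$), and the class minimum need not be attained on $\SSS([\beta])$ — Remark~\ref{remark:wrong} exhibits exactly a super summit element of an ASQP class with no norm-$(n-2)$ factor. So "every super summit element has $\nb\geq 2$" is perfectly compatible with $\nb([\beta])=1$, and your final step collapses. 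It is also worth noting that where the paper does prove the analogous statement for $n\leq 4$ (Theorem~\ref{thm:stronger-version}), it does so not by tracking factors through decycling but by a canonical-length versus exponent-sum count, which works only because every nontrivial proper canonical factor in $B_4$ has norm $1$ or $2=n-2$; that counting argument is exactly what breaks for $n\geq 5$, where intermediate norms exist.
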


Using $\LCF(\beta)$ we can also obtain upper bounds of the negative band number $\nb(\beta)$. 
\begin{theorem}[Theorem \ref{thm:3braid-nb}] 
Let $\beta\in B_n$. 
\begin{itemize}
\item
If $\inf(\beta)<0 < \sup(\beta)$ 
then
$\nb(\beta)\leq (n-2) |\inf(\beta)|.$
\item
If $\inf(\beta)<\sup(\beta)\leq 0$ 
then
$$\nb(\beta)= -{\tt writhe}(\beta) \leq (n-2) |\inf(\beta)| + |\sup(\beta)|.$$
\end{itemize}
Moreover, when $n=3$ the equality holds for both cases. 
\end{theorem}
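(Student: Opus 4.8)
The plan is to read off both bounds directly from the left-canonical form $\LCF(\beta)\equiv\delta^r A_1\cdots A_k$, keeping track of the \emph{band length} $||A_i||$ of each canonical factor (so $A_i$ is a positive word in $||A_i||$ bands, $||\delta||=n-1$) together with the structural fact that $1\le ||A_i||\le n-2$ for every factor occurring in the normal form. First I would dispatch the two lower bounds. For the \emph{writhe lower bound}, the writhe is the exponent sum, a homomorphism $B_n\to\mathbb Z$; if a word $W$ representing $\beta$ uses $p$ positive and $q$ negative bands then ${\tt writhe}(\beta)=p-q$, so $q=p-{\tt writhe}(\beta)\ge -{\tt writhe}(\beta)$, giving $\nb(\beta)\ge -{\tt writhe}(\beta)$ for every $\beta$, with equality exactly when $\beta$ admits a representative word with $p=0$. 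For the \emph{$\inf$ lower bound}, I would use subadditivity of $\inf$: writing $W=x_1\cdots x_N$ as a product of bands $x_i=a^{\pm1}$, we have $\inf(\beta)\ge\sum_i\inf(x_i)$, and since $\inf(a)=0$ while $\inf(a^{-1})=-\sup(a)=-1$, this yields $\inf(\beta)\ge -q$, hence $\nb(\beta)\ge|\inf(\beta)|$ whenever $\inf(\beta)\le 0$.

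For the upper bound in the first case ($\inf(\beta)<0<\sup(\beta)$), set $m=|\inf(\beta)|$ and observe that $k=\sup(\beta)-\inf(\beta)>m$, so there are strictly more than $m$ positive canonical factors. The key computation is a rewriting of the prefix $\delta^{-m}A_1\cdots A_m$ into a word in negative bands only. Writing $\tau(X)=\delta X\delta^{-1}$ for the length-preserving symmetry of the lattice of canonical factors and sliding each $\delta^{-1}$ to the right via $\delta^{-1}X=\tau^{-1}(X)\delta^{-1}$, one checks inductively that
\[
\delta^{-m}A_1\cdots A_m=(\delta^{-1}C_1)(\delta^{-1}C_2)\cdots(\delta^{-1}C_m),\qquad C_j=\tau^{-(m-j)}(A_j),
\]
with $||C_j||=||A_j||$. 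Since $C_j\preceq\delta$, the right complement $E_j$ defined by $\delta=C_jE_j$ is again a canonical factor and $\delta^{-1}C_j=E_j^{-1}$, a word in $||E_j||=(n-1)-||A_j||$ negative bands. Concatenating and appending the purely positive tail $A_{m+1}\cdots A_k$, I obtain a representative of $\beta$ using $\sum_{j=1}^m\big((n-1)-||A_j||\big)=m(n-1)-\sum_{j=1}^m||A_j||$ negative bands; as each $||A_j||\ge1$, this is at most $m(n-2)=(n-2)|\inf(\beta)|$, the claimed bound.

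For the second case ($\inf(\beta)<\sup(\beta)\le0$) I would first prove the equality $\nb(\beta)=-{\tt writhe}(\beta)$. The inequality $\ge$ is the general writhe lower bound above, and for $\le$ it suffices to exhibit a representative word with no positive bands; this follows because $\inf(\beta^{-1})=-\sup(\beta)\ge0$, so $\beta^{-1}$ lies in the Birman--Ko--Lee positive monoid and is a positive band word, whence $\beta$ is a negative band word. The numerical inequality then reduces to bookkeeping: from ${\tt writhe}(\beta)=r(n-1)+\sum_{i=1}^k||A_i||$ with $r=\inf(\beta)$ one gets $-{\tt writhe}(\beta)=|\inf(\beta)|(n-1)-\sum_i||A_i||$, and the desired $-{\tt writhe}(\beta)\le(n-2)|\inf(\beta)|+|\sup(\beta)|$ is equivalent to $k\le\sum_{i=1}^k||A_i||$, which holds since $||A_i||\ge1$. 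The sharpening to equality when $n=3$ is then immediate: for $n=3$ every proper canonical factor has $||A_i||=1$, so $\sum_i||A_i||=k$ and both estimates become equalities; and in the first case the upper bound $(n-2)|\inf(\beta)|=|\inf(\beta)|$ meets the subadditivity lower bound $\nb(\beta)\ge|\inf(\beta)|$.

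The main obstacle I anticipate is the rewriting identity for $\delta^{-m}A_1\cdots A_m$: one must verify that the $\tau$-shifted factors $C_j$ remain canonical factors of the same band length and that complementation behaves as claimed, so that the count $\sum_j\big((n-1)-||A_j||\big)$ is genuinely achieved. Everything else is a homomorphism/subadditivity estimate or the elementary inequality $||A_i||\ge1$; the real content is in turning the negative power $\delta^{-m}$ into at most $(n-2)$ negative bands per unit of $|\inf|$ by absorbing positive factors, and in recognizing that when $\sup(\beta)\le0$ the braid is outright negative.
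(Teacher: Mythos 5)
Your proof is correct and follows essentially the same route as the paper: absorb each $\delta^{-1}$ in $\LCF(\beta)$ into a positive canonical factor via its complement (this is precisely the reduction operation $\Red$ of Definition~\ref{def:Red}), count the resulting negative bands using $1\le ||A_i||\le n-2$, and combine with the lower bounds $\nb(\beta)\ge |\inf(\beta)|$ and $\nb(\beta)\ge -{\tt writhe}(\beta)$ for the $n=3$ equalities. The only cosmetic difference is that you absorb $\delta^{-m}$ into the first $m$ factors whereas the paper absorbs it into the $m$ factors of largest word length (which is what yields the sharper bound of Remark~\ref{rmk:by Ito}); both choices give the stated inequality.
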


The negative band number has deep connections to low-dimensional topology and contact geometry. Bennequin \cite{Bennequin} found a correspondence between closed braids in $\mathbb R^3$ and transverse links in the standard tight contact structure on $\mathbb R^3$. Let $\SL(K)=\max\left\{ \sel(\beta) \mid \beta\in K \right\}$ 
the maximum self-linking number of braid representatives of a knot/link type $K$ (we write $\beta\in K$), and let $\chi(K)= 2-2g(K) - |K|$ the maximal Euler characteristic. 
Here, $\sel(\beta)$ is the self-linking number of the transverse link $\widehat\beta$, $g(K)$ is the genus of the link, and $|K|$ is the number of link components. The Bennequin inequality states
\begin{equation}\label{eq:Bennequin-inequality}
\SL(K)\leq -\chi(K).
\end{equation}

This inequality largely viewed as the starting point of modern contact geometry led him to a seminal result \cite{Bennequin} which distinguishes the tight contact structure and an overtwisted contact structure on $S^3$.  
Ito and Kawamuro \cite[Definition 1.1]{IK19}  defined the {\em defect} $\D(K)$ of the Bennequin inequality by
\begin{equation}\label{eq:Defect-Def}
\D(K):=\tfrac{1}{2}(-\chi(K)-\SL(K)).
\end{equation}
They showed that $\D(K) \in \mathbb Z$ and the defect gives a lower bound for the negative band number
\begin{equation}\label{eq:inequality}
\D(K) \leq \nb(K).
\end{equation}
In fact, in \cite[Conjecture 1]{IK19} it is conjectured that 
$\D(K) = \nb(K)$. 

We switch a gear and study when a braid representative $\beta\in K$ of a link type $K$ realizes the negative band number; namely $\nb(\beta)=\nb(K)$. Given a braid $\beta\in B_n$ searching a representing braid word $W$ such that $W$ is minimal in word length (in band generators) is called {\em the shortest word problem}. The length,  $|| \beta ||$, of such a shortest word provides a lower bound on the Euler characteristic of the link type $K$;
$$i(\beta) - ||\beta|| \leq \chi(K)$$
where $i(\beta)=n$ the braid index of $\beta$.
Moreover, due to Birman and Finkelstein \cite{Birman-Finkelstein} we have
$$
\max\left\{i(\beta) - ||\beta|| \mid \  \beta \in K \right\} = \chi(K). 
$$
We first observe the following:  
\begin{theorem}[Theorem~\ref{lem:nb(beta)}]
Let $W$ be a word representing a braid $\beta$. 
\begin{enumerate}[(a)]
\item
$W$ realizes $\nb(\beta)$ if and only if $W$ gives a shortest word for $\beta$.
\item
$W$ realizes $\nb([\beta])$ if and only if $W$ gives a shortest word for $[\beta]$.
\end{enumerate}
\end{theorem}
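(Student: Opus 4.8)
The plan is to exploit the abelianization homomorphism $e\colon B_n\to\mathbb Z$, which I will call the exponent sum (or writhe). Since the abelianization of $B_n$ is $\mathbb Z$ and each band generator $a_{ij}$ is conjugate to an Artin generator $\sigma_k$, every positive band maps to $+1$ and every negative band to $-1$ under $e$. The crucial point is that $e(\beta)$ depends only on the braid $\beta$ (indeed only on its conjugacy class), not on the chosen word representing it.

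Now suppose $W$ is any word representing $\beta$, and write $p$ for the number of positive bands and $q=\nb(W)$ for the number of negative bands appearing in $W$. Then the length of $W$ is $|W|=p+q$, while $e(\beta)=p-q$. Solving these two equations gives
\begin{equation}\label{eq:nb-length}
\nb(W)=q=\tfrac{1}{2}\bigl(|W|-e(\beta)\bigr).
\end{equation}
Since $e(\beta)$ is a fixed integer independent of $W$, the right-hand side is a strictly increasing affine function of the word length $|W|$. Consequently, among all words representing $\beta$, a word minimizes $\nb$ if and only if it minimizes length; equivalently, $\nb(\beta)=\tfrac12(\|\beta\|-e(\beta))$. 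This is precisely the assertion of part (a).

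For part (b), I would note that $e$ is invariant under conjugation, so every braid $\beta'$ conjugate to $\beta$ satisfies $e(\beta')=e(\beta)$. Identity \eqref{eq:nb-length} therefore holds uniformly across the entire conjugacy class: for any word $W$ representing any conjugate $\beta'$ one has $\nb(W)=\tfrac12(|W|-e(\beta))$. Minimizing $\nb$ over all words representing all conjugates is thus equivalent to minimizing length over the same collection, which yields the equivalence between realizing $\nb([\beta])$ and giving a shortest word for $[\beta]$.

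I do not anticipate a serious obstacle here; the only point requiring care is the verification that the exponent sum is well defined on band-generator words, i.e.\ that each band generator contributes $+1$. This is immediate from the relation $a_{ij}=(\sigma_{j-1}\cdots\sigma_{i+1})\,\sigma_i\,(\sigma_{i+1}^{-1}\cdots\sigma_{j-1}^{-1})$ expressing each band as a conjugate of a single Artin generator, so that its abelianized image is $1$. Once this is in hand, both parts follow directly from the linear relation \eqref{eq:nb-length}.
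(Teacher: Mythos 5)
Your proof is correct and rests on exactly the same observation as the paper's: the exponent sum (writhe) $p-q$ is an invariant of $\beta$ (and of $[\beta]$), so $\nb(W)=\tfrac12(|W|-e(\beta))$ is a strictly increasing function of word length, making the two minimization problems equivalent. The paper phrases this as a pair of inequalities between a shortest word and an $\nb$-realizing word rather than as a single closed formula, but the substance is identical.
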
 

Thanks to the polynomial-time algorithm for shortest words for $4$-braids by Kang, Ko and Lee \cite{KangKoLee}, the above theorem implies that $\nb(\beta)$ 
can be computed in polynomial-time for any $4$-braids. 

We also study how $\nb(\beta)$ behaves under stabilization, a fundamental braid operation.  
In \cite{Orevkov} 
Orevkov asks: 
{\em Given a quasipositive link, is any braid representing the link with minimal braid index quasi-positive?} 
This question has been (partially) solved by Hayden \cite{Hayden}. 
Here is a related question attributed to Rudolph:  
{\em Given a strongly quasipositive link, is any braid representing the link with minimal braid index strongly quasi-positive?} 
Knowing that $\nb(K)=0$ characterizes SQP links, it is natural to extend the questions to the following: 
{\em Given a link type $K$, does any braid representing $K$ with minimal braid index achieve the negative band number $\nb(K)$?} 
We answer the question negatively: 

\begin{theorem}[Theorem~\ref{prop:stabilization example}]
There exists a sequence of link types $\{ K_n \}_{n\geq 1}$ 
whose minimal braid representative $\beta_n$ has 
$\nb(\beta_n)-\nb(K_n)=n$. 
\end{theorem}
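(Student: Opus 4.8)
The plan is to build the family by connected sum, reducing the entire statement to one carefully chosen base knot together with additivity of the relevant invariants. Concretely, I would first locate a strongly quasipositive knot $J$ of braid index $3$ that carries a $3$-braid representative $\beta\in B_3$ with $\nb(\beta)=1$; this is the mildest possible counterexample to Rudolph's question, namely an SQP knot whose minimal braid representative fails to be SQP. I would then set $K_n=J\,\#\cdots\#\,J$ ($n$ summands), take $\beta_n$ to be the braided connected sum of $n$ copies of $\beta$, living in $B_{2n+1}$, and show $\nb(\beta_n)=n$ while $\nb(K_n)=0$.

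For the base knot I would verify three properties. First, $\nb(\beta)=1$: I would present $\beta$ by a word whose left-canonical form has $\inf(\beta)=-1$ and $\sup(\beta)>0$, so that the equality case of Theorem~\ref{thm:3braid-nb} for $n=3$ gives $\nb(\beta)=|\inf(\beta)|=1$; equivalently, by an explicit band word of length $i(\beta)-\chi(J)$ containing exactly one negative band. Second, $\nb(J)=0$: I would exhibit an explicit positive word in the band generators (necessarily on more than three strands) whose closure is $J$, i.e. a Bennequin surface realizing $\SL(J)=-\chi(J)$ and hence $\D(J)=0$. Third, $i(J)=3$: I would bound the braid index below, for instance through the Morton--Franks--Williams inequality applied to the HOMFLY polynomial of $J$, certifying that the $3$-braid $\beta$ is genuinely a minimal braid representative.

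With the base case fixed I would use the identity $\nb(\gamma)=\tfrac12\bigl(\|\gamma\|-e(\gamma)\bigr)$, valid for any braid $\gamma$ by Theorem~\ref{lem:nb(beta)} (each band contributes $1$ to the length and $\pm1$ to the writhe $e(\gamma)$), together with three additivity facts under connected sum. Strong quasipositivity is preserved (the Bennequin surfaces glue), giving $\nb(K_n)=0$; the braid index is additive (Birman--Menasco), giving $i(\beta_n)=2n+1=i(K_n)$, so $\beta_n$ remains a minimal braid representative; and both writhe and Euler characteristic are additive, giving $e(\beta_n)=n\,e(\beta)$ and $\chi(K_n)=n\chi(J)-(n-1)$. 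The upper bound $\nb(\beta_n)\le n$ is then immediate by concatenating the shortest words of the factors. For the matching lower bound I would combine $\|\beta_n\|\ge i(\beta_n)-\chi(K_n)$ (the Birman--Finkelstein direction) with these additivities: arranging the base braid so that $e(\beta)=2g(J)$, equivalently so that $\beta$'s shortest word already realizes $\chi(J)$, forces $\|\beta_n\|-e(\beta_n)\ge 2n$ and hence $\nb(\beta_n)\ge n$. Together these give $\nb(\beta_n)=n$ and therefore $\nb(\beta_n)-\nb(K_n)=n$.

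The main obstacle is the base example itself: one must produce a bona fide SQP knot whose minimal braid representative carries a negative band and prove both halves honestly — the positive band presentation certifying $\nb(J)=0$ and $\SL(J)=-\chi(J)$, and the braid-index lower bound certifying that the $3$-braid is minimal. A secondary but essential technical point is the genus-minimality of $\beta$'s Bennequin surface, i.e. the identity $\|\beta\|=i(\beta)-\chi(J)$; this is precisely what pins $e(\beta)=2g(J)$ and makes the lower bound for $\nb(\beta_n)$ come out to exactly $n$ rather than something smaller. Verifying this, along with the clean additivity of shortest word length under braided connected sum, is where the genuine work lies.
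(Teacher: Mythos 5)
Your reduction-to-a-base-case strategy is genuinely different from the paper's argument (the paper constructs an explicit family $L_n=\widehat{\beta_n}$ with $\beta_n\in B_{n+3}$, pins down the braid index via the Morton--Franks--Williams inequality and a HOMFLY-PT skein recursion, computes $\chi(L_n)$ from the degree span of the Alexander polynomial of an explicit Seifert matrix, and realizes $\nb(L_n)=\D(L_n)$ by $n$ positive tunnel-stabilizations together with the generalized Jones conjecture). The arithmetic in your reduction is essentially sound: granting the base knot $J$, the identity $\nb(\gamma)=\tfrac12(\|\gamma\|-e(\gamma))$ does follow from Theorem~\ref{lem:nb(beta)}, Birman--Menasco additivity of braid index gives $i(\beta_n)=2n+1$, and the Bennequin-surface length bound $\|\beta_n\|\geq i(\beta_n)-\chi(K_n)$ combined with additivity of writhe and of $\chi$ does force $\nb(\beta_n)\geq n$, matching the concatenation upper bound.

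The genuine gap is the base case, and it is not a technical loose end but the entire content of the theorem under your approach. You need a strongly quasipositive knot $J$ of braid index $3$ whose $3$-braid representative $\beta$ satisfies $\nb(\beta)=1$, i.e.\ is not conjugate to any SQP braid. Such a $J$ is precisely a negative answer to the question the paper attributes to Rudolph (``is any minimal braid index representative of a strongly quasipositive link strongly quasipositive?''), which the paper deliberately leaves open and explicitly sidesteps by generalizing to the $\nb$-realization question instead; the examples it builds have $\nb(L_n)=3n+1>0$ and are therefore not SQP. You supply no candidate $J$ and no mechanism for certifying one: you would have to exhibit a positive band word on $\geq 4$ strands closing to $J$ (giving $\nb(J)=0$), show via Lemma~\ref{cor:sqpconj} that every element of $\SSS([\beta])$ has negative infimum (giving $\nb(\beta)\geq 1$), and additionally arrange $\|\beta\|=3-\chi(J)$ so that $e(\beta)=2g(J)$, without which your lower bound degrades below $n$. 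Until such a $J$ is produced and verified, the construction does not get off the ground; as written, the proposal reduces the theorem to an assertion that is at least as hard as, and is not implied by, anything established in the paper.
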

This result shows that the number of stabilizations required to reach a braid representative that achieves the negative band number can be arbitrarily large. 

\begin{acknowledgements}
The authors would like to thank Juan Gonzalez-Meneses for useful conversations during a workshop held at ICERM Brown University in April 2022. 
MCS was partially supported by LLMJ fellowship and NSF DMS-2038103. 
TI was partially supported by JSPS
KAKENHI Grant Numbers 21H04428 and 23K03110.
KK was partially supported by NSF DMS2005450. 
RS was partially supported by NSF DMS-2038103.
\end{acknowledgements}

\section{Preliminaries}\label{Sec2}

In this section we review basic materials in \cite{KangKoLee, Birman} including band generators of the braid group $B_n$, the left-canonical form, the super summit set, and the reduction operation. 
For diagrammatic expressions, the reader can find detailed discussions and examples in the author's paper \cite{DiagrammaticKeiko}.

\subsection{Band generators}

For each $i,j$ with $1\leq i<j\leq n$, define an element $a_{ij}$ of $B_n$ by
\begin{equation*}
    a_{ij}=(\sigma_{j-1}\sigma_{j-2}\cdots\sigma_{i+1})\sigma_i(\sigma_{j-1}\cdots\sigma_{j-2}\sigma_{i+1})^{-1}.
\end{equation*}
Note that $\sigma_i=a_{i, i+1}$. 
In the braid diagram, $a_{ij}$ swaps the $i$th and $j$th strands, while leaving all other strands fixed. In addition, the two strands being interchanged must lie in front of all fixed strands. 
As this can be thought as attaching a twisted band to the $i$th and $j$th strands, $a_{i,j}$ is called a {\em band generator} of $B_n$. The band generator presentation (or Birman-Ko-Lee presentation) \cite{Birman} is as follows:
\begin{equation*}
B_n= \GenRels*{ a_{i,j} \st
\begin{medsize}
\begin{aligned} & a_{j,k}a_{i,j}=a_{i,j}a_{i,k}=a_{i,k}a_{j,k},\enspace i<j<k \\[-0.5ex] %
 & a_{i,j}a_{k,l}=a_{k,l}a_{i,j}, \enspace i<j<k<l%
\end{aligned}
\end{medsize}} 
\end{equation*}

The set of band generators of $B_n$ is denoted by $\BG(B_n)$. 
Band generators $a_{i,j}$ are often called {\em positive bands} as each creates a positive crossing in the braid diagram, versus their inverses $a_{i, j}^{-1}$ are called {\em negative bands}.

In $B_4$ there are six band generators 
$$
a_1:=a_{1,2}, \,
a_2:=a_{2,3}, \,
a_3:=a_{3,4}, \,
a_4:=a_{1,4}, \,
b_1:=a_{1,3}, \,
b_2:=a_{2,4}.
$$
($a_i$ and $b_j$ are notations in \cite{KangKoLee}), see also Figure~\ref{fig:band-generators}:
\begin{figure}[h]
\centering
\includegraphics[width=6cm]{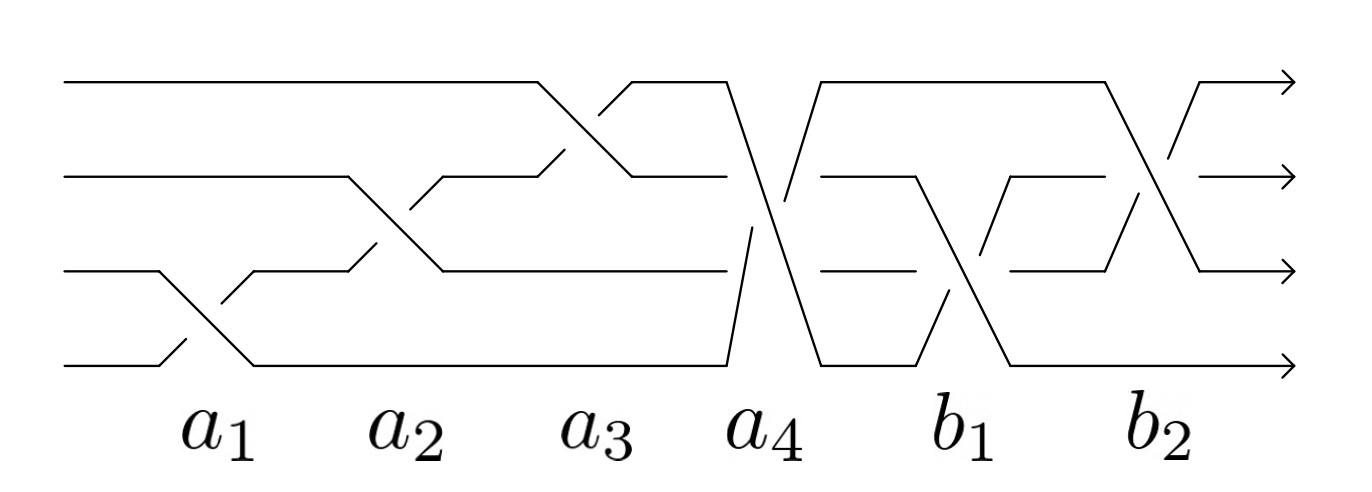}
\caption{Band generators for $B_4$. The braid strands are numbered from 1 (bottom strand) to 4 (top strand).}
\label{fig:band-generators}
\end{figure}

\subsection{Canonical factors}
In this section, we define canonical factors that play an essential role in this paper. 
We start with a partial ordering $\leq$ on the braid group $B_n$. 
\begin{definition}
An $n$-braid is {\em positive} if it is represented by a word in positive band generators. 
Let $SQP(n):=\mathcal F_0$ denote the monoid of positive $n$-braids. 

For two braid words $V,W\in B_n$, we denote $V\leq W$ if $W=PVQ$ for some positive words $P,Q \in SQP(n)$. 
\end{definition}

We introduce three invariants that will appear in the left-canonical form. 
Let $\delta:= \sigma_{n-1} \cdots \sigma_2 \sigma_1$. 
\begin{definition}\label{def:inf-sup}
For $\beta\in B_n$, define
\begin{align*}
    \inf (\beta)&:=\max\{r\in\mathbb{Z}\mid\delta^r\leq \beta\},\\
    \sup(\beta)&:=\min\{s\in\mathbb{Z}\mid \beta\leq\delta^s\},\\
    \ell(\beta)&:=\sup(\beta)-\inf(\beta).
\end{align*}
Here, $\ell(\beta)$ is called the \textit{canonical length} which is different from  $||\beta||$, the usual minimal word length in band generators. 
\end{definition}

\begin{definition}\label{def:CF}
A braid $\beta \in B_n$ is called a {\em canonical factor} if $e \leq \beta \leq \delta$. 
The set of canonical factors is denoted by $\CFn$. 
\end{definition}

For example, $a_2a_1 \in \CFn$ since $a_3a_2 a_1 = \delta$ therefore $e \leq a_2 a_1 \leq \delta$. In terms of canonical length and word length: $\ell(a_2 a_1)=1$ and $||a_2 a_1||=2.$

In \cite[Corollary 3.5]{Birman}, Birman, Ko, and Lee showed that the cardinality of the set $\CFn$ is the $n$th Catalan number $\mathcal{C}_n=(2n)!/n!(n+1)!$. In their proof, the following theorem is implicit.

\begin{theorem}\label{thm:1-1correspondence} 
The set $\CFn$ is in a one-to-one correspondence with the set of noncrossing partitions of $n$ elements. 
\end{theorem}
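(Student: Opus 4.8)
The plan is to realize the correspondence through the natural projection $p\colon B_n\to S_n$ sending each band generator $a_{ij}$ to the transposition $(i\,j)$; under $p$ the Garside element $\delta$ maps to an $n$-cycle $c$. I would show that $p$ restricts to a bijection from $\CFn$ onto the set of permutations for which the partition of $\{1,\dots,n\}$ into cycles (with fixed points counted as singleton blocks) is noncrossing, and that recording this cycle partition is exactly the passage to the underlying noncrossing partition. Since the number of noncrossing partitions of an $n$-element set is the Catalan number $\mathcal C_n$, the bijection simultaneously reproves the cardinality statement behind \cite[Corollary 3.5]{Birman}.

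First I would build the map from partitions to factors. For a block $B=\{b_1<\dots<b_k\}$ let $\xi_B:=a_{b_{k-1},b_k}a_{b_{k-2},b_{k-1}}\cdots a_{b_1,b_2}$ be the \emph{descending cycle}, so that $\xi_{\{1,\dots,n\}}=a_{n-1,n}\cdots a_{1,2}=\delta$ and $p(\xi_B)$ is a single $k$-cycle supported on $B$ (the $k-1$ transpositions form a path on $B$). For a partition $P=\{B_1,\dots,B_r\}$ set $w_P:=\xi_{B_1}\cdots\xi_{B_r}$, so that the cycles of $p(w_P)$ are exactly the blocks of $P$; in particular $P\mapsto w_P$ is injective. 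The first substantive step is the equivalence $w_P\leq\delta\iff P$ is noncrossing. I would prove it by the recursive structure of $NC(n)$: removing the block that contains $n$ partitions the remaining indices into the intervals it cuts out, on each of which $P$ restricts to an independent noncrossing partition, and $\delta$ factors compatibly with this decomposition; inductively one produces a positive word $Q$ with $\delta=w_P\,Q$, whose positivity is guaranteed precisely by the noncrossing hypothesis. (Equivalently $Q$ is the descending-cycle word of the Kreweras complement of $P$.)

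Conversely, take a canonical factor $A$ and consider $p(A)$. Each band of a word for $A$ passes in front of the strands it skips, and the constraint $A\leq\delta$ prevents two such bands from genuinely linking; concretely, a configuration $a<b<c<d$ with $a,c$ in one cycle of $p(A)$ and $b,d$ in another would force a crossing incompatible with $A\leq\delta$, so the cycle partition $P$ of $p(A)$ is noncrossing. It remains to use that $p$ is injective on $\CFn$: a canonical factor is a square-free positive braid and is determined by its image permutation. Since $w_P$ and $A$ are both canonical factors with $p(w_P)=p(A)$, we conclude $A=w_P$, so that $A\mapsto P$ and $P\mapsto w_P$ are mutually inverse, giving the stated one-to-one correspondence.

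The crux is the equivalence $w_P\leq\delta\iff P$ noncrossing, together with the injectivity of $p$ on $\CFn$; both amount to identifying the interval $[e,\delta]$ of divisors of the Garside element $\delta$ with the lattice of noncrossing partitions. I expect to establish the two containments by the induction on $n$ described above — peeling off the block through $n$ — rather than importing that lattice isomorphism as a black box, since the same induction yields the explicit complementary word $Q$. The remaining checks, namely that $p(\xi_B)$ has support $B$, that $\xi_{\{1,\dots,n\}}=\delta$, and that the two maps compose to the identity, are routine once the characterization of a canonical factor as one whose chord diagram is noncrossing has been secured.
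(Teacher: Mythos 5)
First, a point of comparison: the paper does not prove Theorem~\ref{thm:1-1correspondence} itself — it records the statement as implicit in \cite[Corollary 3.5]{Birman} — so there is no in-paper argument to measure you against. Your proposal in effect reconstructs the Birman--Ko--Lee argument: the projection $p\colon B_n\to S_n$, the descending-cycle words $\xi_B$, and the identification of $\CFn$ with noncrossing partitions (with the Kreweras complement supplying the complementary factor $Q$) is exactly how the bijection is set up there, and your conventions check out ($\xi_{\{1,\dots,n\}}=\sigma_{n-1}\cdots\sigma_1=\delta$, and $p(\xi_B)$ is a single cycle supported on $B$). The direction ``$P$ noncrossing $\Rightarrow w_P\leq\delta$'' by peeling off the block containing $n$ is sound and does produce the explicit positive complement.

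There is, however, a genuine gap at the step you dispatch in one sentence: the injectivity of $p$ on $\CFn$, i.e., that a canonical factor is determined by its image permutation. This is not a routine check, and it is not delivered by the peeling induction, which only constructs a complement of $w_P$ for noncrossing $P$. What is actually needed is that \emph{every} positive band word left-dividing $\delta$ equals, as a braid, $w_P$ for the cycle partition $P$ of its image, and that two such words with the same image represent the same element of $B_n$; this uniqueness is the main technical content of \cite{Birman} (their normal-form lemmas for the band-generator monoid, equivalently the fact that the interval $[e,\delta]$ in $B_n$ maps isomorphically onto the interval below an $n$-cycle in the absolute order on $S_n$). Asserting that a canonical factor is ``square-free and determined by its permutation'' assumes precisely this, so the argument as written is circular at its crux. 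Similarly, the claim that the cycle partition of $p(A)$ is noncrossing whenever $A\leq\delta$ is supported only by a picture of linking bands; a rigorous version observes that in any factorization $AQ=\delta$ into positive bands the exponent sum forces $||A||+||Q||=n-1$, hence reflection lengths add in $p(A)p(Q)=c$ and $p(A)$ lies below the $n$-cycle $c$ in absolute order, whose lower interval is $NC(n)$ by Biane's theorem. Either import these facts from \cite{Birman} explicitly (which is all the paper itself does) or supply the monoid-theoretic uniqueness argument; as it stands the proposal is an accurate roadmap rather than a proof.
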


\begin{example}
There are 14 canonical factors in $B_4$.  See \cite{DiagrammaticKeiko} for a diagrammatic reformulation of the canonical factors. 
\begin{align*}
\CF&=\{
e, a_1, a_2, a_3, a_4, b_1, b_2, a_2a_1, a_3a_2, a_4a_3, a_1a_4, a_1a_3, a_2a_4, \delta\}\\
&=\{(\e),(\aone),(\atwo),(\athree),(\afour),(\bone),(\btwo),\\&\enspace\enspace\enspace\enspace(\atwoaone),(\athreeatwo),(\afourathree),(\aoneafour),
    (\aoneathree),(\atwoafour),(\delt)\}.
\end{align*}
\end{example}

Let $\tau:B_n\rightarrow B_n$ be an inner automorphism defined by 
$$\tau(\beta)=\delta^{-1}\beta \delta.$$ 
Since $\tau(\sigma_i)=\sigma_{i+1}$, 
diagrammatically $\tau$ rotates each diagram $2\pi/n$ counterclockwise.
For example, $\tau(a_i)= a_{i+1} $ and $\tau(b_i)= b_{i+1}$. 
In particular, $\tau$ preserves the set $\CFn$.

\begin{theorem}\label{cor_prec}
The following are equivalent. Let $A, B \in \CFn$. 
\begin{itemize}
\item
$PAQ=B$ for some $P, Q \in \CFn$, i.e., $A \leq B$.
\item 
$AQ=B$ for some $Q\in\CFn$, which we denote $A\prec B$. 
\item
$PA=B$ for some $P \in \CFn$. 
\end{itemize}
\end{theorem}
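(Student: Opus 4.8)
The plan is to prove the two nontrivial implications---``two-sided divisibility forces left-divisibility'' and its right-handed analogue---since the implications ``$AQ=B$ with $Q\in\CFn$'' $\Rightarrow A\leq B$ and ``$PA=B$ with $P\in\CFn$'' $\Rightarrow A\leq B$ are immediate, by taking $P=e$ (respectively $Q=e$) in the definition of $\leq$. These two nontrivial implications are moreover interchanged by the order-reversing anti-automorphism $\phi\colon B_n\to B_n$ determined by $\phi(\sigma_i)=\sigma_{n-i}$: one checks that $\phi$ respects the braid relations, carries positive braids to positive braids, and fixes $\delta$ (since $\phi(\sigma_{n-1}\cdots\sigma_1)=\sigma_{n-1}\cdots\sigma_1$), hence preserves $\CFn$, preserves $\leq$, and exchanges left- with right-divisibility. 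So it suffices to prove the single statement: if $A,B\in\CFn$ and $B=PAQ$ with $P,Q$ positive, then $A\prec B$.

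First I would pass to permutations. The band-length (number of band generators in a word) is additive and well defined on positive braids, because every defining relation of the Birman--Ko--Lee presentation is length-homogeneous; for a canonical factor $C$ it equals $n$ minus the number of blocks of the associated noncrossing partition $\pi_C$ from Theorem~\ref{thm:1-1correspondence}. Let $\mu\colon B_n\to S_n$ be the homomorphism $\sigma_i\mapsto(i\ i{+}1)$, so each band generator $a_{ij}$ maps to the transposition $(i\ j)$ and a positive word of band-length $m$ maps to a product of $m$ transpositions. For a canonical factor $C$ the permutation $\mu(C)$ has cycle partition $\pi_C$ and reflection length $\ell_R(\mu(C))=n-\#\{\text{blocks of }\pi_C\}=\|C\|$.

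The crux is that the factorization $B=PAQ$ is automatically \emph{reduced} at the level of transpositions. Indeed, concatenating band words for $P,A,Q$ produces a band word for $B$ of length $\|P\|+\|A\|+\|Q\|=\|B\|=\ell_R(\mu(B))$, so $\mu(B)$ is exhibited as a product of exactly $\ell_R(\mu(B))$ transpositions, i.e.\ a geodesic transposition factorization. I would then invoke the classical fact that in any geodesic transposition factorization of a permutation $w$, the pairs $\{i,j\}$ of the transpositions generate, under union--find, precisely the cycle partition of $w$. Applied to the whole word this gives that all the pairs union to $\pi_B$; applied to the sub-block coming from $A$ alone---itself a geodesic factorization of $\mu(A)$, since $A$ is a canonical factor---it gives $\pi_A$. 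As the $A$-pairs are a subset of all the pairs, $\pi_A$ refines $\pi_B$.

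Finally, using that the bijection of Theorem~\ref{thm:1-1correspondence} is an isomorphism of posets $(\CFn,\prec)\cong(\mathrm{NC}(n),\text{refinement})$, the refinement $\pi_A\le\pi_B$ yields $A\prec B$: concretely $B=AQ'$ with $Q'=A^{-1}B$, and $Q'\in\CFn$ because $Q'$ left-divides the canonical factor $A^{-1}\delta$ and any left-divisor of a canonical factor is again a canonical factor. Precomposing the whole argument with $\phi$ then yields the third bullet, $B=PA$ with $P\in\CFn$, and all three conditions are equivalent. The main obstacle is exactly the implication $A\leq B\Rightarrow A\prec B$: the difficulty is that $A$ sits as a \emph{middle} factor of $B$, so it is not visibly a left-divisor; the resolution is the reducedness observation above, which converts the algebraic ``middle factor'' into a combinatorial ``sub-forest'' of transpositions, after which the noncrossing-partition order finishes the job.
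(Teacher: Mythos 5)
The paper itself offers no proof of Theorem~\ref{cor_prec}: it is stated as a preliminary imported from the Birman--Ko--Lee/Kang--Ko--Lee development of the band-generator (dual Garside) structure, so there is no in-paper argument to compare against. Your route --- reduce to the single implication $A\leq B\Rightarrow A\prec B$ via the anti-automorphism $\phi(\sigma_i)=\sigma_{n-i}$ (which does fix $\delta$ and preserve $\CFn$, as you say), then pass to $S_n$ and observe that homogeneity of the BKL relations makes the concatenated band word for $B=PAQ$ a \emph{geodesic} transposition factorization of $\mu(B)$, so that the union--find/cycle-counting argument forces $\pi_A$ to refine $\pi_B$ --- is the standard modern proof and is correct as far as it goes. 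The reducedness observation is exactly the right key idea, and the cycle-partition fact about geodesic transposition factorizations is applied correctly both to the whole word and to the $A$-block.

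The one genuine soft spot is the last step, where you convert $\pi_A\leq\pi_B$ (refinement) back into $A\prec B$. You appeal to Theorem~\ref{thm:1-1correspondence} being an isomorphism of posets, but that theorem only asserts a bijection of sets; the compatibility of $\prec$ with refinement order is precisely the lattice-theoretic content of the Birman--Ko--Lee construction and is not available for free. Worse, your ``concrete'' justification is circular: you set $Q'=A^{-1}B$ and argue $Q'\in\CFn$ because ``$Q'$ left-divides the canonical factor $A^{-1}\delta$,'' but left-divisibility here must mean divisibility in the positive monoid, and the assertion that $Q'$ is positive is exactly the statement $A\prec B$ you are trying to prove. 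To close this you either need to cite the poset isomorphism $(\CFn,\prec)\cong(\mathrm{NC}(n),\text{refinement})$ explicitly from the literature, or prove directly that refinement of noncrossing partitions implies left-divisibility of the corresponding canonical factors (e.g.\ via the description of canonical factors as commuting products of descending cycles, where a sub-block of a block visibly left-divides the cycle on that block). With that one step repaired or properly attributed, the argument is complete.
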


\begin{corollary}\label{cor:complementary} \cite{DiagrammaticKeiko}
For every canonical factor $A\in \CFn$ there exist $A'$ and $A'' \in\CFn$ such that $A' A=A A''=\delta.$ 
\end{corollary}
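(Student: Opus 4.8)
The plan is to obtain this as a direct application of Theorem~\ref{cor_prec} with the target canonical factor taken to be $\delta$ itself. First I would record that $\delta\in\CFn$: the defining inequalities $e\leq\delta\leq\delta$ of Definition~\ref{def:CF} hold trivially. Next, for an arbitrary $A\in\CFn$, the very definition of a canonical factor gives $e\leq A\leq\delta$, so in particular $A\leq\delta$. This is exactly the first of the three equivalent conditions in Theorem~\ref{cor_prec} in the case $B=\delta$.

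I would then read off the other two equivalent conditions. The third condition supplies a $P\in\CFn$ with $PA=\delta$; putting $A':=P$ yields the left complement $A'A=\delta$. The second condition, namely $A\prec\delta$, supplies a $Q\in\CFn$ with $AQ=\delta$; putting $A'':=Q$ yields the right complement $AA''=\delta$. This finishes the proof.

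There is essentially no obstacle beyond the two bookkeeping checks $\delta\in\CFn$ and $A\leq\delta$, both immediate from Definition~\ref{def:CF}; the genuine content lives entirely in Theorem~\ref{cor_prec}. If a reader preferred an argument that does not route through that theorem, one can verify the claim combinatorially via Theorem~\ref{thm:1-1correspondence}: under the identification of $\CFn$ with noncrossing partitions of $\{1,\dots,n\}$, the factor $\delta$ corresponds to the single-block partition, and the complements $A'$ and $A''$ are realized by the Kreweras complement of the partition attached to $A$ (and its inverse). I would nevertheless present the one-line derivation from Theorem~\ref{cor_prec}, noting in passing that $A'$ and $A''$ need not coincide, since the corollary asserts only existence and not uniqueness of a two-sided complement.
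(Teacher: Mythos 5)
Your derivation is correct. The paper itself supplies no argument for this corollary---it is simply cited from \cite{DiagrammaticKeiko}---so your proof is not ``the same approach'' so much as an approach where the paper has none. Your route is the natural one given what the paper has already stated: $\delta\in\CFn$ and $A\leq\delta$ both follow immediately from Definition~\ref{def:CF}, and Theorem~\ref{cor_prec} with $B=\delta$ then hands you $P,Q\in\CFn$ with $PA=\delta$ and $AQ=\delta$, i.e.\ the two complements $A'$ and $A''$. The one point worth being explicit about is that the first bullet of Theorem~\ref{cor_prec} equates ``$A\leq B$'' (which, per the paper's definition of $\leq$, allows $P,Q$ to be arbitrary positive words in $SQP(n)$) with the existence of $P,Q$ in the smaller set $\CFn$; you are using that equivalence, which is exactly what the theorem asserts, so there is no gap---but the whole content of the corollary is thereby pushed into Theorem~\ref{cor_prec}, which the paper also states without proof. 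Your alternative sketch via Theorem~\ref{thm:1-1correspondence} and the Kreweras complement of noncrossing partitions is also correct and is closer in spirit to what the cited reference actually does; it has the advantage of being independent of Theorem~\ref{cor_prec}. Your closing observation that $A'$ and $A''$ generally differ is right as well (indeed $A''=\tau(A')$).
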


Figure~\ref{fig:HasseDiagram} describes the partially ordered set $(\CF, \prec)$. 
\begin{figure}[h]
\centering
\includegraphics[width=8cm]{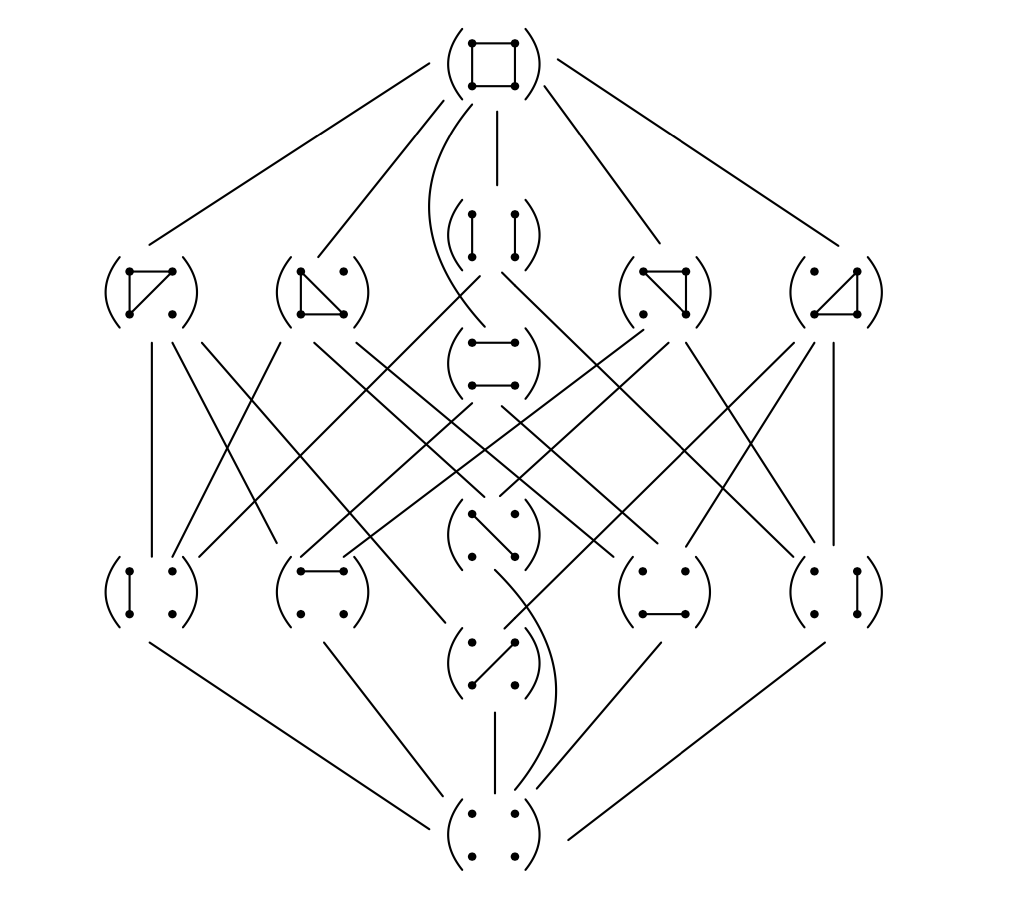}
\caption{The partially ordered set $(\CF, \prec)$. The line connecting a lower diagram $A$ and an upper diagram $B$ means $A \prec B$.} 
\label{fig:HasseDiagram}
\end{figure}

\begin{definition}\label{def:Rightarrow}
Let $A,B,A',B'\in\CFn$. We denote
$$        AB\Rightarrow A'B' $$
if the following are satisfied:
    \begin{enumerate}[a)]
        \item $AB=A'B'$ as braid elements in $B_n$.
        \item $A \prec A'$. 
    \end{enumerate}
In other words, the word $A'B'$ is more {\em left-weighted} than $AB$. 
In case of $B'=e$, $B'$  may be omitted.
\end{definition}

\begin{example}\label{ex:1}
\begin{enumerate}
\item
If $A \in \CFn$ then $A \enspace\delta \Rightarrow \delta\enspace\tau(A).$
\item
$(\bone)(\atwoafour)\Rightarrow (\afourathree)(\atwo)\Rightarrow (\delt) (\e)= (\delt).$
\item
$(\aoneathree)(\bone)
\Rightarrow (\aoneathree)\diamond(\bone)  = (\delt).$
\end{enumerate}
\end{example}

We define maximally left-weighted words that play an important role in the definition of the left-canonical form:

\begin{definition}
Let $A, B \in \CFn$. We say $AB$ is {\em maximally left-weighted} if no non-trivial $A', B' \in \CFn$ satisfy $AB\Rightarrow A'B'$.
\end{definition}

\begin{example}\label{ex:max left-weight}
The word $(\aoneathree)(\btwo)$ is maximally left-weighted.
\end{example}

\subsection{The left-canonical form}\label{sec:LCF}
\begin{theorem}\label{thm:LCF}
\cite{Xu, KangKoLee, Birman}

For any $n$-braid $\beta$, there exist unique $r\in\mathbb Z$, unique $k\in \mathbb Z_{\geq 0}$ and unique canonical factors $A_1,\dots,A_k\in\CFn \setminus \{e, \delta\}$ such that 
\begin{itemize}
\item
    $\beta=\delta^rA_1 A_2\cdots A_k$ as braid elements in $B_n$ and
\item
any consecutive pairs 
$A_1 A_2, \cdots, A_{k-1}A_k$ are maximally left weighted. 
\end{itemize}
Moreover
$$\inf(\beta)=r, \quad \sup(\beta)=r+k, \ \mbox{ and } \ \ell(\beta)=k.$$
The unique factorization $\delta^rA_1 A_2\cdots A_k$ of $\beta$ is called the {\em left-canonical form} of $\beta$ and denoted by $\LCF(\beta)$. 
\end{theorem}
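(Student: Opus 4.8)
The plan is to derive the statement from the Garside-theoretic structure of $B_n$ in the band presentation, the essential combinatorial input being that $(\CFn,\prec)$ is a lattice (by Theorem~\ref{thm:1-1correspondence} it is the lattice $NC(n)$ of noncrossing partitions, with $\prec$ the refinement order and $\wedge,\vee$ the lattice operations). Concretely, I would first record the structural facts I treat as the backbone, all established in \cite{Birman}: the monoid $SQP(n)=\mathcal F_0$ of positive braids is cancellative and atomic; the set of left-divisors of $\delta$ in $SQP(n)$ is exactly $\CFn$; conjugation by $\delta$ realizes $\tau$ and permutes $\CFn$, so $\delta\,SQP(n)=SQP(n)\,\delta$; and every $\beta\in B_n$ admits a fraction form $\beta=\delta^{-m}P$ with $P\in SQP(n)$ and $m\ge 0$ (a high power of $\delta$ left-divides any braid). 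The only one of these I would handle with real care is the lattice property, which I obtain from the identification with $NC(n)$.

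Granting these, I would prove existence greedily. Choose $m\ge 0$ with $P:=\delta^m\beta\in SQP(n)$ and factor $P$ by repeatedly extracting its \emph{head}: the $\prec$-maximal canonical factor left-dividing the current positive braid. This maximum exists and is unique, being the left-gcd $P\wedge\delta$: any canonical factor left-dividing $P$ is $\le P$ and $\le\delta$, hence $\le P\wedge\delta$, while $P\wedge\delta$ itself lies in $\CFn$ and left-divides $P$. Iterating and stripping heads by cancellativity yields $\delta^m\beta=C_1C_2\cdots C_s$ with every consecutive pair maximally left-weighted. A factor $C_i=\delta$ can occur only in an initial block: if $C_i=\delta$ with $C_{i-1}=A\ne\delta$, then $A\delta\Rightarrow\delta\,\tau(A)$ by Example~\ref{ex:1}(1), contradicting left-weightedness of $C_{i-1}C_i$. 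Collecting the leading $\delta$'s gives $\delta^m\beta=\delta^{t}C_{t+1}\cdots C_s$ with no remaining factor equal to $\delta$ (or to $e$, which the greedy procedure never produces); setting $r=t-m$ and $A_i=C_{t+i}$ produces $\beta=\delta^r A_1\cdots A_k$ in the required form, with $A_i\in\CFn\setminus\{e,\delta\}$.

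For uniqueness, the crux is that the head of a positive braid is forced (it is the lattice operation $P\wedge\delta$), so the greedy factorization of $P\in SQP(n)$ is unique by induction on word length, stripping $C_1$ and recursing via cancellativity. To pin down $r$ and $k$, I would show the normalized form makes $\inf$ and $\sup$ manifest. In a left-greedy form the first factor is the head, so $\delta\preceq A_1\cdots A_k$ would force $A_1=\delta$; since $A_1\ne\delta$ we get $\delta^r\le\beta$ but $\delta^{r+1}\not\le\beta$, i.e. $r=\inf(\beta)$. Dually, using a complement $A_k''$ with $A_kA_k''=\delta$ from Corollary~\ref{cor:complementary} one checks $\beta\le\delta^{r+k}$ while $\beta\not\le\delta^{r+k-1}$ (the left-weighted, nontrivial factors cannot collapse), giving $\sup(\beta)=r+k$ and $\ell(\beta)=k$. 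Because $r$ is thereby forced, any two canonical forms of $\beta$ agree in $r$, and after cancelling $\delta^r$ the uniqueness of the greedy form of $A_1\cdots A_k$ finishes the proof.

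\textbf{Main obstacle.} The genuinely hard step is the lattice/Garside input, not the normal-form bookkeeping: one must establish that $SQP(n)$ is cancellative and that the left-divisors of $\delta$ form a lattice with well-defined meet and join. I would secure this through the noncrossing-partition model of Theorem~\ref{thm:1-1correspondence}, under which the required meets and joins are the lattice operations on $NC(n)$, while cancellativity and the fraction form $\delta^{-m}P$ come from the Birman--Ko--Lee analysis of the band monoid in \cite{Birman} (cf.\ also \cite{Xu,KangKoLee}). Once these are in hand, everything else is a formal consequence of the greedy head-extraction algorithm together with the pushing relation of Example~\ref{ex:1}(1).
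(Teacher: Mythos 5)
The paper offers no proof of this statement: Theorem~\ref{thm:LCF} is imported verbatim from \cite{Xu, KangKoLee, Birman}, and the surrounding text only describes the algorithm for computing $\LCF(\beta)$, not a justification of existence or uniqueness. So there is no in-paper argument to compare against; the relevant comparison is with the cited sources, and your proposal is essentially a faithful reconstruction of the Birman--Ko--Lee/Garside-theoretic proof: cancellativity and atomicity of the band monoid, the identification of left-divisors of $\delta$ with $\CFn\cong NC(n)$ as a lattice, the fraction form $\delta^{-m}P$, and greedy head extraction $P\mapsto P\wedge\delta$. That is the right architecture, and your handling of the initial block of $\delta$'s via $A\delta\Rightarrow\delta\,\tau(A)$ is correct.

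One step deserves to be made explicit rather than absorbed into ``the greedy factorization is unique.'' Uniqueness of the greedy factorization only pins down the output of the head-extraction algorithm; to conclude that an \emph{arbitrary} factorization $A_1\cdots A_k$ with each consecutive pair maximally left-weighted coincides with the greedy one, you need the local-to-global lemma: pairwise left-weightedness forces $A_1=(A_1\cdots A_k)\wedge\delta$. This is provable from your ingredients (let $H$ be the head; $A_1\preceq H$, write $H=A_1H'$ by cancellation, note $H'$ is a canonical factor left-dividing $A_2\cdots A_k$, apply induction to get $H'\preceq A_2$, and then left-weightedness of $A_1A_2$ forces $H'=e$), but it is a genuine lemma, not bookkeeping, and it is also what underwrites your claims that $r=\inf(\beta)$ and that the factors ``cannot collapse'' in the $\sup$ computation. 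With that lemma stated and proved, your outline closes.
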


See \cite{KangKoLee} for the details of Kang, Ko, and Lee's algorithm for the left-canonical form for 4-braids, which has been extended to general $n$-braids \cite{Birman} as follows. An important observation is that this is a polynomial-time algorithm.

Suppose 
$\beta=A_1 A_2 \cdots A_k \in SQP(n)$ where $A_i \in \BG(B_4) \subset \CF$. 
Change the factorization of $\beta$ by the following rules.
\begin{enumerate}
\item[(i)] If $A_i=e$ for some $i$, then remove $A_i$ from the word $A_1\cdots A_k$. 
\item[(ii)] Replace $A_i A_{i+1}$ with $A'_i A'_{i+1}$ so that $A_i A_{i+1} \Rightarrow A'_i A'_{i+1}$. 
\end{enumerate}
Apply (i) and (ii) until the partial order cannot be raised anymore. The resulting word is the left-canonical form, $\LCF(\beta)$, as defined in Theorem~\ref{thm:LCF} and it does not depend on the way we apply (a) and (b).

Suppose that $\beta\notin SQP(n)$.
Let $\beta=A_1^{\epsilon_1}A_2^{\epsilon_2}\cdots A_k^{\epsilon_k}$, where $A_i \in \CF$, 
$\epsilon_i=\pm 1$, and 
$\epsilon_i=-1$ for some $i$. 
We will get rid of the negative exponent terms as follows. 
By Corollary~\ref{cor:complementary} there exists a canonical factor $A_i' \in \CF$ such that $A_i A_i' = \delta$. 
We replace $A_i^{-1}$ with $A_i'\delta^{-1}$. 
Applying the rule $A\delta^{-1}=\delta^{-1}\tau^{-1}(A)$ for any $A\in\CFn$ a number of times will shift $\delta^{-1}$ to the beginning of the word. 
Repeat this procedure until we exhaust all the factors with negative exponents. It will give us a word $\delta^{-r}P$ for some $r \in \mathbb N$ and positive word $P \in SQP(n)$. Finally we  apply the above algorithm to the positive word $P$ to obtain $\LCF(\beta$).

\subsection{Summit set and super summit set} 

For later use, we review some properties of summit sets and super summit sets.

\begin{definition}\label{def:inf[beta]}
For the conjugacy class $[\beta]$ of $\beta \in B_n$ we define: 
\begin{eqnarray*}
\inf([\beta])&:=& \max\{ \inf(\beta') \mid \beta' \mbox{ is conjugate to } \beta\} \\
\sup([\beta]) &:=& \min\{ \sup(\beta') \mid \beta' \mbox{ is conjugate to } \beta\}
\end{eqnarray*}
\end{definition}

\begin{definition}
The \emph{summit set} of a braid $\beta$, denoted by $\s([\beta])$, is the set of conjugates $\beta'$ of $\beta$ such that its infimum $\inf(\beta')$ is maximal among all conjugates of $\beta$. That is, $\inf(\beta')=\inf([\beta])$.
\end{definition}

The summit set appears in Garside's original solution of the conjugacy problem \cite{Garside}. 
It has served as a prototype for the later developments of more effective solutions of the conjugacy problem for braid groups and Garside groups.

\begin{definition}\label{def:SSS}\cite{Birman, ElrifaiMorton, Garside} 
The {\em super summit set} of a braid $\beta$, denoted $\SSS([\beta])$, is the set of conjugates $\beta'$ of $\beta$ such that the canonical length
$\ell(\beta')$ is minimal among all the conjugates of $\beta$. 
\end{definition}

In \cite[Corollary 4.6]{KangKoLee} it is proved that the $\inf[\beta]$ and $\sup[\beta]$ can be achieved simultaneously by some $\beta' \in [\beta]$. Since the canonical length $\ell(\beta)=\sup(\beta)-\inf(\beta)$ we have the following:

\begin{proposition}\label{prop:sss}
Any super summit element $\beta' \in \SSS([\beta])$ realizes $\inf([\beta])$ and $\sup([\beta])$. Namely $\inf(\beta')=\inf([\beta])$ and $\sup(\beta')=\sup([\beta])$. 
\end{proposition}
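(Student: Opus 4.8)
The plan is to derive the result from the cited fact that $\inf([\beta])$ and $\sup([\beta])$ are realized simultaneously, combined with two elementary inequalities that hold for every conjugate. First I would record that, directly from Definition~\ref{def:inf[beta]}, every conjugate $\beta'$ of $\beta$ satisfies
\begin{equation*}
\inf(\beta')\leq \inf([\beta]) \quad\text{and}\quad \sup(\beta')\geq \sup([\beta]),
\end{equation*}
since $\inf([\beta])$ is a maximum and $\sup([\beta])$ is a minimum taken over the whole conjugacy class. Subtracting these gives the lower bound
\begin{equation*}
\ell(\beta')=\sup(\beta')-\inf(\beta')\geq \sup([\beta])-\inf([\beta])
\end{equation*}
on the canonical length of any conjugate.

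Next I would invoke the result of Kang, Ko and Lee cited just above the statement: there exists $\beta_0\in[\beta]$ with $\inf(\beta_0)=\inf([\beta])$ and $\sup(\beta_0)=\sup([\beta])$ simultaneously. For this element $\ell(\beta_0)=\sup([\beta])-\inf([\beta])$, which by the previous bound is the smallest canonical length occurring in the conjugacy class. Hence $\beta_0\in\SSS([\beta])$, and more importantly the minimal canonical length defining the super summit set equals $\sup([\beta])-\inf([\beta])$.

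Finally I would run a squeeze argument. Let $\beta'\in\SSS([\beta])$ be arbitrary. By the definition of the super summit set its canonical length is minimal, so $\sup(\beta')-\inf(\beta')=\sup([\beta])-\inf([\beta])$, which rearranges to
\begin{equation*}
\sup(\beta')-\sup([\beta])=\inf(\beta')-\inf([\beta]).
\end{equation*}
The left-hand side is $\geq 0$ and the right-hand side is $\leq 0$ by the inequalities of the first step, so both sides must vanish. This gives $\inf(\beta')=\inf([\beta])$ and $\sup(\beta')=\sup([\beta])$, as claimed.

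I do not expect any serious obstacle here: the substantive content is entirely packaged in the cited simultaneity result, which I am allowed to assume, and everything else is a short squeeze using the monotonicity of $\inf$ and $\sup$ over the conjugacy class. The only point that deserves explicit care is the observation that minimality of $\ell$ forces the two first-step inequalities to become equalities, rather than merely bounding the canonical length; making that step transparent is what the rearrangement above is designed to do.
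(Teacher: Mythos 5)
Your argument is correct and is essentially the same as the paper's: the paper simply cites the Kang--Ko--Lee simultaneity result and the identity $\ell(\beta)=\sup(\beta)-\inf(\beta)$, and your squeeze argument is the (correct) expansion of that one-line justification.
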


An important fact proved by Elrifai and Morton is that: 
two braids are conjugate if and only if their super summit sets are identical if and only if their super summit sets intersect \cite{ElrifaiMorton,Garside}. 
The conjugacy problem in terms of band generators has been solved for $B_3$ by Xu \cite{Xu}, for $B_4$ by Kang, Ko, and Lee \cite{KangKoLee} applying the technique of Elrifai and Morton, and for $B_n$ by Birman, Ko, and Lee \cite{Birman}. Their algorithms use the $\LCF(\beta)$ and explicitly compute the $\SSS([\beta])$.
Although computing the whole $\SSS([\beta])$ requires exponential-time in general, finding one element of $\s([\beta])$ or $\SSS([\beta])$, (and hence computing $\inf([\beta]), \sup([\beta])$ and $\ell([\beta])$) can be done in polynomial-time \cite{Birman}.

\subsection{Reduction and the shortest word problem} 
\begin{definition}\label{def:Red}
Given a word  $W=\delta^rW_1W_2\cdots W_s$ with $W_i\neq e$ and 
$W_i$ or $W_i^{-1} \in \CFn \setminus \{\delta, e\}$ 
define $\red(W)$ as follows:
\begin{enumerate}[(1)]
    \item if $r\geq 0$ or $W_i<e$ for all $i=1,\dots,s$, then $\red(W)=W$;
    \item otherwise (i.e., $r<0$ and there exists some $i$ with $W_i\in\CF$), choose a word $W_k\in \CFn$ whose word length $||W_k||$ is maximal among all $W_1, \dots, W_k$. 
By Corollary~\ref{cor:complementary} there exists $V_k \in \CF$ such that $W_k \diamond V_k = W_k V_k = \delta$. Put $W_k' = (V_k)^{-1}.$
We define
    \begin{equation*}
        \red(W)=\delta^{r+1}\tau(W_1)\cdots\tau(W_{k-1})W_k'W_{k+1}\cdots W_s,
    \end{equation*}
\end{enumerate}
Define $\Red(W):=\red^{|r|}(W)$, i.e., repeat the above algorithm until the exponent of $\delta$ is non-negative. We note that both $\red(W)$ and $\Red(W)$ depend on choices. 
\end{definition}

\begin{example}
Let 
    \begin{align*}
        W&=\delta^{-2}(a_3a_2)(a_4a_3)a_4b_1b_2
        =(\delt)^{-2}(\athreeatwo)(\afourathree)(\afour)(\bone)(\btwo)\\
        &=\delta^{-2}W_1W_2W_3W_4W_5.
    \end{align*}
    Here, $r<0$, so we proceed to option (2) in the algorithm. We choose the word $W_2=(\afourathree)=(a_4a_3)$ of maximal word length among all $W_i$. 
Since $(\afourathree)\diamond(\atwo)=(\delt)$ we set $W_2'=(\atwo)=a_2$.      
Thus,
$$
        \red(W)=\delta^{-1}\tau(W_1)W_2'W_3W_4W_5
=\delta^{-1}(a_4a_3)a_2^{-1}a_4b_1b_2.
$$
    We repeat this process one more time. We choose the word $\tau(W_1)=a_4a_3$ of maximal length among all $W_i$. Then,
    \begin{equation*}
        \Red(W)=\red^2(W)=a_2^{-1}a_2^{-1}a_4b_1b_2.
    \end{equation*}
\end{example}

Here is Kang, Ko and Lee's solution to the shortest word problem for a 4-braid and also for the conjugacy class of a $4$-braid.
Both solutions come with a polynomial-time algorithm since computing one element of $\SSS([\beta])$ can be done in polynomial-time.

\begin{theorem}\label{lem:KKL-reduced} 
\cite[Lemma 5.1]{KangKoLee}
Let $n\leq 4$ and $\beta\in B_n$. For any word representative $W$ of $\beta$, every reduced word of the left-canonical form minimizes the word length of $\beta$; namely, 
$$||\Red(\LCF(\beta))|| \leq ||W||.$$

Similarly, for every $\beta' \in \SSS([\beta])$, $||\Red(\LCF(\beta'))||$ minimizes the word length of braids conjugate to $\beta$.
\end{theorem}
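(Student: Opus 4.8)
The plan is to recast the shortest word problem as the problem of minimizing the number of negative bands, trap that minimum between a lower bound coming from $\inf$ and $\sup$ and the explicit count produced by $\Red$, and show the two coincide precisely when $n\le 4$. For the bookkeeping, let $\nb(W)$ be the number of negative bands of a band word $W$ representing $\beta$ and let $e(\beta)$ be its exponent sum (the writhe, a homomorphic invariant). Since the number of positive bands equals $\nb(W)+e(\beta)$, one has $\|W\|=e(\beta)+2\,\nb(W)$, so with $e(\beta)$ fixed, minimizing $\|W\|$ is the same as minimizing $\nb(W)$; the theorem is thus equivalent to the claim that $\Red(\LCF(\beta))$ attains $\min_W\nb(W)$. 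Expanding $\beta$ as the product of the bands of $W$ and using the standard superadditivity of $\inf$ and subadditivity of $\sup$ in $B_n$ (a positive band has $\inf=0,\sup=1$, a negative band has $\inf=-1,\sup=0$) yields the \emph{inf bound} $\nb(W)\ge\max(0,-\inf(\beta))$ and, since the positive band count is $\nb(W)+e(\beta)\ge\max(0,\sup(\beta))$, the \emph{sup bound} $\nb(W)\ge\max(0,\sup(\beta))-e(\beta)$. Both depend only on $\beta$.

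Next I would compute $\nb(\Red(\LCF(\beta)))$ from $\LCF(\beta)=\delta^rA_1\cdots A_k$. If $r\ge 0$ the word is positive and there is nothing to prove, so assume $r<0$. By Corollary~\ref{cor:complementary}, converting a positive canonical factor $A$ to its $\delta$-complement absorbs one $\delta^{-1}$ and contributes $(n-1)-\|A\|$ negative bands, because complementary canonical factors have lengths summing to $\|\delta\|=n-1$ (from the noncrossing-partition description of Theorem~\ref{thm:1-1correspondence}). As $\tau$ preserves word length, the greedy rule of Definition~\ref{def:Red} converts, over its $|r|$ steps, the $|r|$ longest of $A_1,\dots,A_k$. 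When $\sup(\beta)\le 0$ all factors are converted and the surviving $\delta^{\sup(\beta)}$ supplies $(n-1)|\sup(\beta)|$ more negative bands, giving $\nb=-e(\beta)$, which already meets the sup bound for every $n$. When $\sup(\beta)>0$ one gets $\nb(\Red(\LCF(\beta)))=\sum_{j=1}^{|r|}\big((n-1)-\ell_{(j)}\big)$, where $\ell_{(1)}\ge\cdots\ge\ell_{(k)}$ are the sorted factor lengths; this count is independent of the tie-breaking in the reduction.

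The crux is the case $\inf(\beta)<0<\sup(\beta)$, and this is exactly where $n\le 4$ is indispensable. Subtracting the two bounds from the reduction count gives
$$\nb(\Red(\LCF(\beta)))-(\text{inf bound})=\sum_{j=1}^{|r|}\big((n-2)-\ell_{(j)}\big),\qquad \nb(\Red(\LCF(\beta)))-(\text{sup bound})=\sum_{j=|r|+1}^{k}\big(\ell_{(j)}-1\big),$$
both nonnegative since $1\le\ell_{(j)}\le n-2$. For $n\le 4$ every canonical factor other than $e$ and $\delta$ has length $1$ or $n-2$, so the sorted $\ell_{(j)}$ take only these two values: if some top-$|r|$ factor has length $<n-2$ then all lower factors equal $1$ and the second sum vanishes, while if some bottom factor has length $>1$ then all top factors equal $n-2$ and the first sum vanishes. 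Either way one excess is $0$, so the reduction count equals $\max$ of the two lower bounds, proving $\Red(\LCF(\beta))$ is shortest. The hard part is precisely this dichotomy: for $n\ge 5$ the lengths range over $\{1,\dots,n-2\}$, the sorted argument collapses, and neither bound need be attained — which is what confines the theorem to $n\le 4$.

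Finally, for the conjugacy statement I would use monotonicity of the bounds across $[\beta]$. By Proposition~\ref{prop:sss} any $\beta'\in\SSS([\beta])$ realizes $\inf([\beta])$ and $\sup([\beta])$, so every conjugate $\gamma$ satisfies $-\inf(\gamma)\ge-\inf(\beta')$ and $\sup(\gamma)\ge\sup(\beta')$, whence both lower bounds for $\gamma$ dominate those for $\beta'$. Combining this with $\|\gamma\|=e(\beta)+2\,\nb(\gamma)$ and the per-element result gives $\|\gamma\|\ge\|\Red(\LCF(\beta'))\|$ for all $\gamma\in[\beta]$, with equality at $\gamma=\beta'$; hence $\Red(\LCF(\beta'))$ minimizes word length over the conjugacy class.
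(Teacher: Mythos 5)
Your argument is correct, and it is worth noting that the paper itself offers no proof of Theorem~\ref{lem:KKL-reduced} at all: the statement is imported verbatim as \cite[Lemma 5.1]{KangKoLee}, so there is nothing internal to compare against except the surrounding machinery. What you have written is a legitimate self-contained derivation, and it isolates exactly the right mechanism. The reduction of the shortest-word problem to minimizing $\nb(W)$ via $||W||=e(\beta)+2\,\nb(W)$ is the same observation the paper proves later as Theorem~\ref{lem:nb(beta)}; the lower bound $\nb(W)\geq -\inf(\beta)$ is the paper's Theorem~\ref{thm:inequality}; and your count of negative bands in $\Red(\LCF(\beta))$ agrees with the computation in the proof of Theorem~\ref{thm:3braid-nb} and Remark~\ref{rmk:by Ito}. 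The genuinely new content you supply is the dual \emph{sup bound} and the observation that for $n\leq 4$ the lengths of canonical factors in $\CFn\setminus\{e,\delta\}$ lie in $\{1,n-2\}$, which forces one of the two excesses $\sum_{j\leq |r|}((n-2)-\ell_{(j)})$ and $\sum_{j>|r|}(\ell_{(j)}-1)$ to vanish; this pinpoints precisely why the theorem is confined to $n\leq 4$ and why $\Red(\LCF(\beta))$ can fail to be shortest for $n\geq 5$. Two small points you should make explicit if this were to stand as a full proof: the superadditivity of $\inf$ and subadditivity of $\sup$ under products (standard in Garside theory, but nowhere stated in this paper), and the identity $||A||+||A'||=n-1$ for a complementary pair $AA'=\delta$, which follows from additivity of the exponent sum on positive words rather than needing Theorem~\ref{thm:1-1correspondence}. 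The conjugacy statement is handled correctly via Proposition~\ref{prop:sss} and the conjugacy invariance of the exponent sum.
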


\section{Detection of SQP and ASQP braids}\label{Sec:6}

In this section, we characterize SQP braids and ASQP braids in terms of the left-canonical form. 
We assume $n\geq 3$.

\begin{lemma}\label{Thm:SQ}
An $n$-braid $\beta\in B_n$ is SQP if and only if $\inf(\beta)\geq 0$.
\end{lemma}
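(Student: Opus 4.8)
The plan is to prove the two-sided chain of equivalences
$$
\beta \text{ is SQP} \iff e \leq \beta \iff \inf(\beta) \geq 0,
$$
using the condition $e \leq \beta$ as the natural bridge between the algebraic notion of positivity (being SQP) and the Garside-theoretic invariant $\inf$. Both equivalences should fall out by directly unwinding the definition of the partial order $\leq$ and of $\inf$, so the argument is short; the only care needed is in checking that explicitly positive factors can be freely inserted into, and absorbed by, the surrounding positive words, which is precisely what the definition of $\leq$ together with the positivity of $\delta$ provides.

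For the first equivalence I would argue as follows. By definition $\beta$ is SQP exactly when it lies in the monoid $SQP(n)=\mathcal F_0$, i.e. when $\beta$ is represented by a word in positive band generators. On the other hand, $e \leq \beta$ means, by the definition of $\leq$, that $\beta = P\,e\,Q = PQ$ for some positive words $P,Q \in SQP(n)$. If $\beta$ is SQP, then writing $\beta = \beta\cdot e\cdot e$ with both $\beta$ and $e$ positive shows $e \leq \beta$; conversely, if $e \leq \beta$ then $\beta = PQ$ is a product of positive words, hence positive, hence SQP. This settles the first equivalence with no computation.

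For the second equivalence, recall from Section~\ref{Sec2} that $\delta = \sigma_{n-1}\cdots\sigma_1$ is itself a positive braid, so $\delta^{r}$ is positive for every $r \geq 0$. If $\inf(\beta)\geq 0$, then since $\inf(\beta)$ is defined as a maximum we have $\delta^{\inf(\beta)} \leq \beta$, say $\beta = P\,\delta^{\inf(\beta)}\,Q$ with $P,Q$ positive; as $\inf(\beta)\geq 0$ the factor $\delta^{\inf(\beta)}$ is positive, so $\beta = \bigl(P\,\delta^{\inf(\beta)}\bigr)\,e\,Q$ exhibits $e \leq \beta$. Conversely, if $e \leq \beta$ then $\delta^{0}=e \leq \beta$ shows that $r=0$ is admissible in the set defining $\inf(\beta)$, whence $\inf(\beta)\geq 0$.

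Since the proof is essentially definitional, I do not expect a serious obstacle. The one point to handle cleanly is that the relation $\leq$ permits absorbing an explicitly positive factor such as $\delta^{r}$ (for $r\geq 0$) into the adjacent positive words without disturbing the validity of $e \leq \beta$; this is immediate from the definition of $\leq$ together with the positivity of $\delta$ recalled in the preliminaries. All the required inputs, namely the definition of $\inf$, the positivity of $\delta$, and the monoid structure of $SQP(n)$, are already available.
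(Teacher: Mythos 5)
Your proof is correct. It differs from the paper's own argument in the forward direction: the paper takes a positive word $W$ representing $\beta$, runs the left-canonical-form algorithm on it, and reads off $\inf(\beta)\geq 0$ from the resulting normal form $\delta^{r}A_1\cdots A_k$, implicitly using the identification $\inf(\beta)=r$ from Theorem~\ref{thm:LCF} together with the fact that the algorithm applied to a positive word never introduces a negative power of $\delta$. You bypass the normal form entirely and argue straight from Definition~\ref{def:inf-sup}: being SQP gives $e=\delta^{0}\leq\beta$, so $0$ lies in the set whose maximum defines $\inf(\beta)$, hence $\inf(\beta)\geq 0$. Your route is more elementary and self-contained, using only the monoid structure of $SQP(n)$, the positivity of $\delta$, and the definition of $\leq$; the paper's route instead ties the statement to the LCF machinery that is used to compute $\inf$ throughout the rest of the paper. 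The backward direction --- $\inf(\beta)\geq 0$ yields $\beta=P\,\delta^{\inf(\beta)}\,Q$, a product of positive braids, hence SQP --- is exactly what the paper dismisses as trivial, and matches yours.
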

\begin{proof} 
($\Leftarrow$) Trivial. 
($\Rightarrow$) Let $\beta\in B_n$ be strongly quasipositive. By definition, $\beta$ can be represented by a positive word, $W$, in band generators of $B_n$. Applying the LCF-algorithm  (Section~\ref{sec:LCF}) to $W$ we obtain $\inf(\beta)\geq 0$. 
\end{proof}

\begin{lemma}\label{cor:sqpconj}
The following are equivalent:
\begin{itemize}
\item A braid $\beta\in B_n$ is conjugate to a SQP braid.
\item Every element $\beta' \in \SSS([\beta])$ has $\inf(\beta')\geq 0$.
\item There exists an element $\beta' \in \SSS([\beta])$ with $\inf(\beta')\geq 0$.
\end{itemize}
\end{lemma}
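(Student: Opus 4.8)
The plan is to reduce all three bullet points to the single numerical condition $\inf([\beta]) \geq 0$ and then invoke Lemma~\ref{Thm:SQ}. The key structural input is Proposition~\ref{prop:sss}, which says that every $\beta' \in \SSS([\beta])$ satisfies $\inf(\beta') = \inf([\beta])$. This immediately collapses the distinction between the second and third conditions: since all super summit elements share the common infimum $\inf([\beta])$, and since $\SSS([\beta])$ is a nonempty finite set, the existence of one $\beta'$ with $\inf(\beta') \geq 0$ is equivalent to every $\beta'$ having $\inf(\beta') \geq 0$, and both are equivalent to $\inf([\beta]) \geq 0$. So the whole lemma reduces to the assertion that $\beta$ is conjugate to a SQP braid if and only if $\inf([\beta]) \geq 0$.

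For the forward direction I would argue as follows. Suppose $\beta$ is conjugate to a SQP braid $\gamma$. By Lemma~\ref{Thm:SQ} we have $\inf(\gamma) \geq 0$. Since $\gamma$ lies in the conjugacy class $[\beta]$, and $\inf([\beta])$ is by Definition~\ref{def:inf[beta]} the maximum of $\inf(\cdot)$ over that class, we obtain $\inf([\beta]) \geq \inf(\gamma) \geq 0$. For the converse, assume $\inf([\beta]) \geq 0$ and pick any $\beta' \in \SSS([\beta])$. Proposition~\ref{prop:sss} gives $\inf(\beta') = \inf([\beta]) \geq 0$, so Lemma~\ref{Thm:SQ} shows $\beta'$ is SQP; and $\beta'$ is conjugate to $\beta$ by construction. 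Hence $\beta$ is conjugate to a SQP braid.

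I do not anticipate a serious obstacle; once Proposition~\ref{prop:sss} is in hand the argument is essentially bookkeeping. The only point that deserves a word of care is that $\SSS([\beta])$ must be genuinely nonempty for the ``there exists'' clause not to be vacuous, which follows from the standard fact that super summit sets are nonempty and finite for every braid. One could instead organize the proof as a cycle of implications (first $\Rightarrow$ second $\Rightarrow$ third $\Rightarrow$ first), but because Proposition~\ref{prop:sss} makes the second and third conditions literally coincide with $\inf([\beta]) \geq 0$, the direct two-directional argument above is cleaner and avoids redundant steps.
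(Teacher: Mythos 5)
Your proof is correct and follows exactly the route the paper takes: the paper's own proof is a one-line citation of Proposition~\ref{prop:sss} and Lemma~\ref{Thm:SQ}, and your argument is simply the careful unpacking of that reduction. Nothing is missing.
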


\begin{proof}
The statement follows from Proposition~\ref{prop:sss} and Lemma~\ref{Thm:SQ}. 
\end{proof}

\begin{corollary}[SQP problem]\label{cor:SQP problem}
The problem to determine whether a given braid is conjugate to a SQP braid can be solved in polynomial-time with respect to both the word length and the number of braid strands. 
\end{corollary}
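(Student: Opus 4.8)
The plan is to reduce the SQP-conjugacy decision to a single numerical computation---namely computing $\inf([\beta])$---and then to invoke the polynomial-time complexity of the Birman--Ko--Lee machinery already recorded in the preliminaries.

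First I would combine Lemma~\ref{cor:sqpconj} with Proposition~\ref{prop:sss}. By Proposition~\ref{prop:sss} every $\beta' \in \SSS([\beta])$ satisfies $\inf(\beta') = \inf([\beta])$, so the third bullet of Lemma~\ref{cor:sqpconj} (``there exists $\beta' \in \SSS([\beta])$ with $\inf(\beta') \geq 0$'') is equivalent to the single condition $\inf([\beta]) \geq 0$. Hence $\beta$ is conjugate to an SQP braid if and only if $\inf([\beta]) \geq 0$, and the decision problem collapses to computing the integer $\inf([\beta])$ and testing its sign.

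Second I would quote the complexity fact stated in the super summit subsection: finding a single element of $\SSS([\beta])$, and hence computing $\inf([\beta])$, can be carried out in polynomial time by the algorithm of Birman, Ko, and Lee \cite{Birman}. Concretely, from a word $W$ representing $\beta$ one first computes $\LCF(\beta)$ in polynomial time, then applies cycling and decycling (equivalently, repeated $\red$ moves together with their $\sup$-lowering counterpart) to raise $\inf$ to its summit value; both the number of such steps and the cost of each are polynomial in the word length $||W||$ and the number of strands $n$. Reading off $r=\inf(\beta')$ from the resulting left-canonical form and checking whether $r \geq 0$ then decides the SQP-conjugacy problem.

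The logical reduction is routine; the only point needing care is the complexity bookkeeping. The main thing to confirm is that the Birman--Ko--Lee procedure is polynomial \emph{simultaneously} in $||W||$ and $n$---that is, that converting $W$ to left-canonical form and then reaching one super summit conjugate hides no exponential dependence on the number of strands---rather than merely polynomial in $||W||$ for each fixed $n$. Since we only ever need \emph{one} super summit element, and never the full set $\SSS([\beta])$ (which may be exponentially large), this is exactly the bound furnished by \cite{Birman}, and no further estimate is required.
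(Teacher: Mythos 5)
Your proposal is correct and follows essentially the same route as the paper, which gives no explicit proof but clearly intends exactly this reduction: by Lemma~\ref{cor:sqpconj} and Proposition~\ref{prop:sss} the question collapses to testing whether $\inf([\beta])\geq 0$, and the preliminaries already record that one element of $\SSS([\beta])$ (hence $\inf([\beta])$) is computable in polynomial time via \cite{Birman}. Your closing remark about polynomiality simultaneously in $||W||$ and $n$ is the right point of care and is precisely what the citation to \cite{Birman} is meant to supply.
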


Here is a sufficient condition for an ASQP braid: 

\begin{proposition}\label{cor:SSS-ASQP1}
A braid $\beta\in B_n$ is conjugate to an ASQP braid but not conjugate to any SQP braid if there exists a $\beta' \in \SSS(\beta)$ such that $\inf(\beta')=-1$ and at least one canonical factor  $A_i$ in $\LCF(\beta')\equiv\delta^{-1} A_1 \cdots A_k$ has length $||A_i||=n-2$. 
\end{proposition}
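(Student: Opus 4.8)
The goal is to show that the stated condition on $\beta'$ forces $\beta$ to be conjugate to an ASQP braid (i.e.\ $\nb([\beta])\le 1$) while excluding SQP (i.e.\ $\nb([\beta])\ge 1$). The plan is to handle the two assertions separately. First I would establish that $\beta$ is \emph{not} conjugate to any SQP braid: by Lemma~\ref{cor:sqpconj}, $\beta$ is conjugate to a SQP braid if and only if some (equivalently every) super summit element has $\inf\ge 0$. Since we are handed a $\beta'\in\SSS([\beta])$ with $\inf(\beta')=-1$, and by Proposition~\ref{prop:sss} every super summit element realizes $\inf([\beta])$, every element of $\SSS([\beta])$ has $\inf=-1<0$. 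Hence the SQP criterion fails, and $\beta$ is not conjugate to a SQP braid. This direction is essentially immediate from the already-established SQP characterization.

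The substantive part is to exhibit, after conjugation, a braid word for $\beta$ with exactly one negative band. I would work directly with $\LCF(\beta')\equiv\delta^{-1}A_1\cdots A_k$. The single factor of $\delta^{-1}$ is what must be cancelled at the cost of one negative band. The key structural input is Corollary~\ref{cor:complementary}: for the distinguished factor $A_i$ with $\|A_i\|=n-2$ there is a complementary canonical factor $A_i'$ with $A_i A_i'=\delta$, and since $\|A_i\|=n-2$ one checks that the complement $A_i'$ has $\|A_i'\|=1$, i.e.\ $A_i'$ is a single positive band $a_{pq}$. The idea is then to use the reduction-type move $A\delta^{-1}=\delta^{-1}\tau^{-1}(A)$ from Section~\ref{sec:LCF} to shuttle the $\delta^{-1}$ rightward through $A_1,\dots,A_{i-1}$ (replacing them by their $\tau^{-1}$-images, which are still \emph{positive} canonical factors and contribute no negative bands), until $\delta^{-1}$ sits immediately before $A_i$. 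At that point I would rewrite $\delta^{-1}A_i = (A_i A_i')^{-1}A_i = (A_i')^{-1} = a_{pq}^{-1}$, converting the entire $\delta^{-1}$ together with $A_i$ into the single negative band $a_{pq}^{-1}$, while the remaining factors $A_{i+1}\cdots A_k$ stay positive. The resulting word is a product of positive bands with exactly one negative band, so $\nb(\beta')\le 1$, and therefore $\beta$ is conjugate to an ASQP braid.

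The main obstacle I anticipate is the bookkeeping in the fact that $\|A_i\|=n-2$ forces the complement $A_i'$ to be a single band: one must verify, using the noncrossing-partition correspondence of Theorem~\ref{thm:1-1correspondence} together with $\|A_i\|+\|A_i'\|=\|\delta\|=n-1$, that the complementary partition is a single transposition rather than a longer factor. A secondary care point is to confirm that moving $\delta^{-1}$ past the earlier factors really leaves each $\tau^{-1}(A_j)$ as a genuine (positive) canonical factor — this is guaranteed because $\tau$ preserves $\CFn$, as noted before Theorem~\ref{cor_prec}. Once these two points are pinned down, the rest is a direct rewriting that introduces no further negative bands, so the bound $\nb([\beta])\le 1$ follows and, combined with the first paragraph, yields exactly the ASQP-but-not-SQP conclusion.
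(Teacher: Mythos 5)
Your proof is correct and follows essentially the same route as the paper's: rule out SQP via Lemma~\ref{cor:sqpconj} and Proposition~\ref{prop:sss}, then push $\delta^{-1}$ through the leading factors and absorb it together with $A_i$ into the single negative band $(A_i')^{-1}$ supplied by Corollary~\ref{cor:complementary}. The only slip is cosmetic: moving $\delta^{-1}$ rightward past $A_j$ replaces it by $\tau(A_j)$, not $\tau^{-1}(A_j)$, but either way the factors remain in $\CFn$ and the conclusion stands.
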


\begin{proof}
Suppose that $\beta' \in \SSS(\beta)$ has $\inf(\beta')=-1$ and $\LCF(\beta')\equiv\delta^{-1}A_0\cdots A_k$ contains an $A_i$ with length $||A_i||=n-2$. 
Then by Lemma~\ref{cor:sqpconj} $\beta$ cannot be conjugate to a SQP braid. 

Corollary~\ref{cor:complementary}  implies that there is a unique $C_i \in \CFn$ such that 
$A_i C_i = \delta$. 
Since $||\delta^{-1}A_i||=(n-1)-(n-2)=1$ we have $||C_i||=1$. 
Applying the relation $\delta^{-1} A \delta =\tau(A)$ for $A\in\CFn$ to the $\LCF(\beta')$ we obtain
$$\beta' = \tau(A_0) \cdots \tau(A_{i-1})C_i^{-1} A_{i+1}\cdots A_k.$$ 
This shows $\beta'$ is ASQP since $\tau(A)\in\CFn$. 
\end{proof}

\begin{theorem}
\label{theorem:summit}
A braid $\beta \in B_n$ is conjugate to an ASQP braid but not conjugate to a SQP braid if and only if these exists $\beta' \in \s([\beta])$ such that $\inf(\beta')=-1$ and its left-canonical form $\LCF(\beta')\equiv\delta^{-1}A_1\ldots A_{k}$ satisfies $||A_1||=n-2$.
\end{theorem}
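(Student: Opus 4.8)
The plan is to prove both implications separately, dispatching the sufficiency direction quickly and spending the effort on necessity, where the summit set and a first-factor analysis of the left-canonical form carry the argument.

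For the $(\Leftarrow)$ direction, suppose $\beta'\in\s([\beta])$ has $\inf(\beta')=-1$ and $\LCF(\beta')\equiv\delta^{-1}A_1\cdots A_k$ with $||A_1||=n-2$. Since $\beta'\in\s([\beta])$ realizes $\inf([\beta])=-1<0$, Lemma~\ref{cor:sqpconj} immediately gives that $\beta$ is not conjugate to any SQP braid. For the ASQP claim I would apply Corollary~\ref{cor:complementary} to choose $C_1\in\CFn$ with $A_1C_1=\delta$; the additivity of word length along a complementary factorization (exactly the computation used in the proof of Proposition~\ref{cor:SSS-ASQP1}) gives $||C_1||=(n-1)-(n-2)=1$, so $C_1$ is a single band generator and $A_1=\delta C_1^{-1}$. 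Substituting yields $\beta'=\delta^{-1}A_1\cdots A_k=C_1^{-1}A_2\cdots A_k$, a word with exactly one negative band $C_1^{-1}$ and a positive tail, so $\nb(\beta')\le 1$ and $\beta'$ (hence $\beta$) is ASQP.

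For the $(\Rightarrow)$ direction I would first pin down $\inf([\beta])$. Because $\beta$ is conjugate to an ASQP but not to a SQP braid, we have $\nb([\beta])=1$, so some conjugate $\gamma$ is represented by a positive word carrying exactly one negative band; cyclically permuting (that is, conjugating by the positive prefix) I may assume $\gamma=C^{-1}R$ with $R$ positive and $C$ a band generator. Writing $C^{-1}=\delta^{-1}C''$ with $C''C=\delta$ (so that $||C''||=n-2$) gives $\gamma=\delta^{-1}(C''R)$ with $C''R$ positive, whence $\inf(\gamma)\ge -1$. On the other hand, $\beta$ not being conjugate to SQP forces $\inf([\beta])\le -1$ through Lemma~\ref{cor:sqpconj}, so $\inf([\beta])=-1$ and therefore $\gamma\in\s([\beta])$.

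The step I expect to be the main obstacle is controlling the leading factor $A_1$ of $\LCF(\gamma)$. Since $\inf(\gamma)=-1$ one has $\inf(C''R)=0$, so $\LCF(\gamma)\equiv\delta^{-1}A_1\cdots A_k$ where $A_1\cdots A_k=\LCF(C''R)$. The key point is that $A_1$ is the greatest canonical factor left-dividing $C''R$; since $C''$ itself left-divides $C''R$, this forces $C''\preceq A_1$ and hence $||A_1||\ge||C''||=n-2$ by monotonicity of word length along $\prec$. Conversely $A_1\neq\delta$, and $\delta$ is the unique canonical factor of word length $n-1$, so $||A_1||\le n-2$. Combining the two bounds gives $||A_1||=n-2$, and $\beta'=\gamma$ is the required witness. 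I expect the only delicate bookkeeping to be the two length facts invoked here — that the leading normal-form factor is the maximal simple left-divisor, and that word length is monotone along $\prec$ with $\delta$ the unique top-length factor — both of which follow from the canonical-factor/noncrossing-partition dictionary of Theorem~\ref{thm:1-1correspondence} together with the lattice structure behind Theorem~\ref{cor_prec}.
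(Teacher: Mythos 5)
Your proof is correct and follows the same overall strategy as the paper: the if direction via the complementary factor $C_1$ with $A_1C_1=\delta$ and $||C_1||=1$, and the only-if direction by conjugating the ASQP representative to the form $a^{-1}R$ with $R$ positive and rewriting it as $\delta^{-1}(\delta a^{-1})R$. The one place you diverge is in establishing $||A_1||=n-2$: the paper argues that $\delta^{-1}(\delta a^{-1})A_2\cdots A_k$ is \emph{already} the left-canonical form, because if $A_1A_2$ were not maximally left-weighted then $A_1$ (having length $n-2$) could only grow to $\delta$, which would cancel the $\delta^{-1}$ and make $\beta'$ SQP, a contradiction; you instead sandwich the length of the true leading factor between $||C''||=n-2$ and $n-2$ using the fact that the first factor of the left normal form of a positive braid is its maximal simple left-divisor. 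Both arguments are valid, but note that the maximal-left-divisor characterization, while standard Garside theory for the Birman--Ko--Lee structure, is not among the facts stated in this paper (left-weightedness is only defined here as a local condition on consecutive pairs), whereas the paper's contradiction argument needs nothing beyond the uniqueness of $\delta$ as the length-$(n-1)$ canonical factor; if you keep your route you should cite the normal-form property explicitly.
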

\begin{proof}
If part follows from Proposition~\ref{cor:SSS-ASQP1}.  
To see the only if part, assume $\beta$ is conjugate to an ASQP braid $\beta'$. Without loss of generality we may assume that $\beta'=a^{-1}A_2\cdots A_k$  for some band generator $a \in \BG(B_n)$ and $A_2, \dots, A_k \in \CFn$ where $\LCF(a \beta')\equiv A_2\cdots A_k$.  
Let $A_1 = \delta a^{-1}$ so that $\beta'=\delta^{-1}A_1\cdots A_k$. 
Note that $A_1$ is a canonical factor with $||A_1||=n-2$.

We claim that $\delta^{-1}A_1\cdots A_k$ is the left-canonical form of $\beta'$. 
If not, $A_1A_2$ must not be maximally left-weighted. 
Since $||A_1||=n-2$ this means that $\LCF(A_1 A_2)\equiv\delta A'$ for some $A' \in \CFn$. It implies that $\beta' = A'A_3\cdots A_k$ is a SQP braid, a contradiction.
\end{proof}

Although the summit set $\s([\beta])$ is much larger than the super summit set $\SSS([\beta])$, we know $\s([\beta])$ is still a finite set and one can compute it almost the same way as the super summit set via repeated cycling.
Theorem \ref{theorem:summit} implies the following exponential-time solution to the ASQP problem:

\begin{corollary}[ASQP problem, exponential-time]\label{cor:ASQP problem}
The problem to determine whether a given braid is conjugate to an ASQP braid but is not conjugate to any SQP braid can be solved in exponential-time with respect to both the word length and the number of braid strands.
\end{corollary}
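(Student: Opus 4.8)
The plan is to show Corollary~\ref{cor:ASQP problem} follows directly from the characterization in Theorem~\ref{theorem:summit} together with standard facts about the summit set $\s([\beta])$. The statement asserts that deciding ``conjugate to ASQP but not to SQP'' is solvable in exponential time in both the word length and the braid index $n$, so the proof is really a complexity bookkeeping argument. First I would reduce the problem to checking the explicit criterion: by Theorem~\ref{theorem:summit}, a braid $\beta$ satisfies the condition if and only if there exists $\beta' \in \s([\beta])$ with $\inf(\beta') = -1$ whose left-canonical form $\LCF(\beta')\equiv\delta^{-1}A_1\cdots A_k$ has $||A_1||=n-2$. Since the summit set is a conjugacy invariant and every element of $\s([\beta])$ shares the same $\inf$ value $\inf([\beta])$, the decision procedure is: compute $\inf([\beta])$; if $\inf([\beta]) \neq -1$ then (after first ruling out $\inf([\beta])\geq 0$, which is the SQP case handled by Corollary~\ref{cor:SQP problem}) answer ``no''; otherwise enumerate $\s([\beta])$ and test each element for the leading-factor length condition.

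Next I would account for the cost of each step. Computing $\inf([\beta])$ and $\sup([\beta])$, and finding a single summit element, is polynomial-time by the results quoted in Section~\ref{Sec2} (Birman–Ko–Lee). The SQP sub-case is polynomial by Corollary~\ref{cor:SQP problem}. The remaining and dominant cost is enumerating the full summit set $\s([\beta])$. Here I would invoke the standard structure theory: $\s([\beta])$ is finite and can be generated from any one summit element by repeated conjugation (the cycling/decycling or convexity argument that makes the summit set a connected, closed set under the relevant moves), exactly as in Garside's original solution to the conjugacy problem referenced in the paragraph preceding the corollary. Enumerating all of $\s([\beta])$ requires a number of steps bounded by its cardinality, which is at worst exponential in the word length and in $n$; each individual conjugation, $\LCF$ computation, and comparison of the leading canonical factor against $||A_1||=n-2$ is polynomial. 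Testing the criterion on each enumerated element is therefore cheap relative to the enumeration itself, so the total running time is exponential, as claimed.

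The main obstacle — really the only genuine content beyond citing Theorem~\ref{theorem:summit} — is justifying the exponential bound on $|\s([\beta])|$ and the termination/completeness of the enumeration of $\s([\beta])$ via cycling. I would handle this by appealing to the fact, noted in the text just above the corollary, that $\s([\beta])$ is finite and computable ``almost the same way as the super summit set via repeated cycling''; the finiteness and the reachability of every summit element through cycling are classical (Garside, Elrifai–Morton), and the crude exponential size bound follows since every element of $\s([\beta])$ is a product $\delta^{\inf([\beta])}A_1\cdots A_k$ of canonical factors of bounded canonical length, and there are at most $\mathcal{C}_n^{\,k}$ such words. I would remark that no better bound is claimed precisely because the summit set, unlike the super summit set, is not known to admit a polynomial-time enumeration — which is exactly the point contrasted in the surrounding discussion. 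Assembling these observations gives the stated exponential-time decision procedure.
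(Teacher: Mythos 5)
Your proposal is correct and follows essentially the same route as the paper, which presents the corollary as an immediate consequence of Theorem~\ref{theorem:summit} together with the remark that $\s([\beta])$ is finite and computable by repeated cycling, the enumeration being the exponential-time bottleneck while each membership test ($\inf(\beta')=-1$ and $\|A_1\|=n-2$) is polynomial. Your added crude size bound on $\s([\beta])$ (canonical factors counted by Catalan numbers, with canonical length uniformly bounded via the fixed exponent sum) is a reasonable elaboration of what the paper leaves implicit.
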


For the $4$-braid case the super summit set can be used to solve the ASQP problem and we have a stronger version of Theorem~\ref{theorem:summit}.

\begin{theorem}\label{thm:stronger-version}
Let $\beta \in B_4$. The following are equivalent:
\begin{enumerate}
\item
A braid $\beta$ is conjugate to an ASQP braid but not conjugate to any SQP braid.
\item
Every $\beta' \in \SSS([\beta])$ has $\inf(\beta')=-1$ and its left-canonical form $\LCF(\beta')\equiv\delta^{-1}A_1\ldots A_{k}$ contains a canonical factor  $A_i$ with $||A_i||=2$. 
\item
There exists a $\beta' \in \SSS([\beta])$ with $\inf(\beta')=-1$ and its left-canonical form $\LCF(\beta')\equiv\delta^{-1}A_1\ldots A_{k}$ contains a canonical factor  $A_i$ with $||A_i||=2$. 
\end{enumerate}
\end{theorem}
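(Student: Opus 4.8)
The plan is to prove Theorem~\ref{thm:stronger-version} by establishing the cycle of implications $(2) \Rightarrow (3) \Rightarrow (1) \Rightarrow (2)$, leveraging that in $B_4$ we have $n-2=2$, so the super summit condition on $||A_i||$ matches exactly the canonical factor length appearing in Proposition~\ref{cor:SSS-ASQP1}. The implication $(2)\Rightarrow(3)$ is immediate once we know $\SSS([\beta])$ is nonempty, which holds since super summit sets are always nonempty finite sets. The implication $(3)\Rightarrow(1)$ is essentially Proposition~\ref{cor:SSS-ASQP1} applied with $n=4$: if some $\beta'\in\SSS([\beta])$ has $\inf(\beta')=-1$ and a canonical factor of length $2$, then $\beta$ is conjugate to an ASQP braid, and since $\inf(\beta')=-1<0$ Lemma~\ref{cor:sqpconj} guarantees $\beta$ is not conjugate to any SQP braid.

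The substance of the proof lies in the implication $(1)\Rightarrow(2)$, which has two parts. First, if $\beta$ is conjugate to an ASQP braid but not to a SQP braid, then by Lemma~\ref{cor:sqpconj} no super summit element can have nonnegative infimum; combined with the fact that $\beta$ is ASQP (so its negative band number is at most $1$), I expect to argue that $\inf(\beta')=-1$ for every $\beta'\in\SSS([\beta])$. The key point is that $\inf([\beta])$ is realized by every super summit element (Proposition~\ref{prop:sss}), so it suffices to bound $\inf([\beta])$ from below by $-1$: being conjugate to an ASQP braid forces $\inf([\beta])\geq -1$, while not being SQP forces $\inf([\beta])<0$, pinning it to $-1$. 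The second and harder part is showing that the left-canonical form $\delta^{-1}A_1\cdots A_k$ of \emph{every} super summit element must contain a canonical factor of length exactly $2$.

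The hard part will be this last claim, since it is a statement about \emph{all} super summit representatives rather than the mere existence of one, and it is precisely where the restriction to $B_4$ is essential (the if-direction generalizes but the only-if-direction fails for $n\geq 5$, per Remark~\ref{remark:wrong}). The plan is to argue by contradiction: suppose some $\beta'\in\SSS([\beta])$ with $\inf(\beta')=-1$ has $\LCF(\beta')\equiv\delta^{-1}A_1\cdots A_k$ in which every $A_i$ has $||A_i||\neq 2$. Since the $A_i$ are nontrivial canonical factors in $\CF$ and the only lengths available are $1$, $2$, or $3$ (length $3$ being $\delta$ itself, which is excluded), this forces every $A_i$ to have $||A_i||=1$, i.e., every $A_i$ is a single band generator. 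I would then analyze how $\beta'$ can be ASQP: writing $\beta'=\delta^{-1}A_1\cdots A_k$ and converting to a word in band generators via $A_1^{-1}=\delta^{-1}$-absorption as in Proposition~\ref{cor:SSS-ASQP1}, the total negative band count is controlled by the length of $\delta^{-1}A_1$. When $||A_1||=1$ we have $||\delta^{-1}A_1||=2$, producing two negative bands after reduction, so $\beta'$ would have $\nb(\beta')\geq 2$ and fail to be ASQP — unless some maximally-left-weighted constraint can be exploited. The delicate step is verifying, using the explicit Hasse diagram of $(\CF,\prec)$ in Figure~\ref{fig:HasseDiagram} together with the maximal-left-weightedness of consecutive pairs $A_iA_{i+1}$, that no further cancellation is possible when all factors have length $1$; this is a finite but case-heavy check intrinsic to the $B_4$ combinatorics, and I expect it to be the crux of the argument. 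Once this contradiction is secured, every super summit element must contain a length-$2$ canonical factor, completing $(1)\Rightarrow(2)$.
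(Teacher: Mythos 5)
Your outer architecture matches the paper: $(2)\Rightarrow(3)$ is trivial, $(3)\Rightarrow(1)$ is exactly Proposition~\ref{cor:SSS-ASQP1} with $n=4$, and your argument pinning $\inf([\beta])=-1$ (hence $\inf(\beta')=-1$ for all of $\SSS([\beta])$ via Proposition~\ref{prop:sss}) is sound. The reduction of the remaining case to ``all $A_i$ have $\|A_i\|=1$'' is also correct, since in $B_4$ the nontrivial canonical factors other than $\delta$ have length $1$ or $2$.

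The gap is in your contradiction target for the crux of $(1)\Rightarrow(2)$. You aim to show that if every $A_i$ has length $1$ then $\nb(\beta')\geq 2$, ``so $\beta'$ fails to be ASQP.'' But hypothesis (1) only says $\beta$ is \emph{conjugate to} an ASQP braid; the super summit element $\beta'$ need not itself be ASQP, so $\nb(\beta')\geq 2$ contradicts nothing --- you would need $\nb([\beta'])\geq 2$, and the ``finite but case-heavy check'' with the Hasse diagram that you defer to is not an argument for that. There are two ways to close this. One is to stay with your strategy but invoke the $B_4$-specific machinery: the second statement of Theorem~\ref{lem:KKL-reduced} says that for $\beta'\in\SSS([\beta])$ the word $\Red(\LCF(\beta'))$ is shortest \emph{in the conjugacy class}, and Theorem~\ref{lem:nb(beta)}(b) then says it realizes $\nb([\beta])$; since reducing $\delta^{-1}A_1\cdots A_k$ with all $\|A_i\|=1$ produces exactly two negative bands, this gives $\nb([\beta])=2$, contradicting $\nb([\beta])\leq 1$. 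The paper instead uses a shorter counting argument that avoids shortest words entirely: the exponent sum is a conjugacy invariant, so if all $\|A_i\|=1$ then $e(\beta)=-3+k$ and $\ell(\beta')=k=3+e(\beta)$; but Theorem~\ref{theorem:summit} supplies $\beta''\in\s([\beta])$ with $\LCF(\beta'')=\delta^{-1}A_1'\cdots A_{k'}'$ and $\|A_1'\|=2$, forcing $e(\beta)\geq -3+2+(k'-1)$, i.e.\ $\ell(\beta'')=k'\leq 2+e(\beta)<k=\ell(\beta')$, contradicting the minimality of canonical length that defines $\SSS([\beta])$. Either repair works, but as written your proposal does not yet constitute a proof of $(1)\Rightarrow(2)$.
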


\begin{proof}
(2)$\Rightarrow$(3) is trivial.
(3)$\Rightarrow$(1) follows by Proposition~\ref{cor:SSS-ASQP1}. 
We prove (1)$\Rightarrow$(2). 
Assume, to the contrary that there is $\beta' \in \SSS([\beta])$ such that every canonical factor $A_i$ in $\LCF(\beta')\equiv\delta^{-1}A_1\ldots A_{k}$ has $||A_i||=1$. 
Then the exponent sum $e(\beta')=e(\beta) = -3+k$.
Hence the canonical length $\ell(\beta')=k= 3+e(\beta)$. 
On the other hand, we already know by Theorem~\ref{theorem:summit} that there is $\beta'' \in \s([\beta])$ such that $\LCF(\beta'')= \delta^{-1}A'_1\ldots A'_{k'}$ with $||A'_1||=2$. Thus
$-3+2+(k'-1)\leq e(\beta'') = e(\beta)$ so its canonical length satisfies $\ell(\beta'')=k' < 2+e(\beta) < k=
\ell(\beta')$. This contradicts that $\beta' \in \SSS([\beta])$, the minimality of its canonical length.
\end{proof}

This leads to the following polynomial-time solution to the ASQP problem for $3$- and $4$-braids:

\begin{corollary}[ASQP problem, polynomial-time]
For $3$- and $4$-braids, the problem whether given braid is conjugate to an ASQP braid but is not conjugate to any SQP braid can be solved in polynomial-time with respect to the word length.
\end{corollary}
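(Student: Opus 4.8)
The plan is to read the statement off directly from Theorem~\ref{thm:stronger-version} (for $4$-braids) and from the $B_3$ part of Theorem~\ref{thm ASQP-intro} (for $3$-braids), combined with the fact recalled in Section~\ref{Sec2} that, for a fixed braid index, a single element of $\SSS([\beta])$ together with $\inf([\beta])$ and $\sup([\beta])$ can be computed in polynomial time with respect to the word length via the Birman-Ko-Lee algorithm. The essential structural point is that, because the braid index is fixed ($n=3$ or $4$), the set $\CFn$ of canonical factors has constant size (the $n$th Catalan number), so any scan over canonical factors costs nothing.

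First I would dispose of the $3$-braid case. By Proposition~\ref{prop:sss} every super summit element realizes $\inf([\beta])$, so the condition ``every $\beta'\in\SSS([\beta])$ has $\inf(\beta')=-1$'' appearing in Theorem~\ref{thm ASQP-intro} is equivalent to the single numerical test $\inf([\beta])=-1$. Since $\inf([\beta])$ is computable in polynomial time, the algorithm for $B_3$ is simply to compute $\inf([\beta])$ and output ``yes'' precisely when it equals $-1$.

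For $B_4$ I would run the following procedure: (i) compute one element $\beta'\in\SSS([\beta])$ and its left-canonical form $\LCF(\beta')\equiv\delta^{r}A_1\cdots A_k$; (ii) test whether $\inf(\beta')=r=-1$; (iii) scan the factors $A_1,\dots,A_k$ for one with $||A_i||=2$; then output ``yes'' iff both (ii) and (iii) hold. Step (i) is polynomial time by the cited results, and since the number of factors equals $\ell(\beta')=\sup(\beta')-\inf(\beta')$, which is bounded by the word length, steps (ii)--(iii) are trivial. Correctness of testing only the single computed representative is exactly the content of the equivalence $(1)\Leftrightarrow(2)\Leftrightarrow(3)$ in Theorem~\ref{thm:stronger-version}: if the computed $\beta'$ passes (ii) and (iii) then condition (3) holds and hence (1); conversely, if $\beta'$ fails either test then it witnesses the failure of the ``for all'' condition (2), so (1) fails as well.

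The main point --- and the reason this improves on the exponential-time Corollary~\ref{cor:ASQP problem} --- is precisely that one may inspect a \emph{single} super summit representative rather than enumerate the whole, exponentially large, super summit set. I therefore expect no genuine obstacle beyond carefully invoking the polynomial-time bounds for computing a super summit element and its canonical form, and observing that the canonical length is polynomially bounded so that the factor-length scan is free. The one-element shortcut is legitimate only because the ``for all'' form (2) and the ``there exists'' form (3) coincide in $B_3$ and $B_4$; this is special to small braid index and, as noted around Conjecture~\ref{conj:ASQP}, is not known for general $n$.
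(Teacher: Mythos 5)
Your proposal is correct and follows essentially the same route as the paper: compute a single element of $\SSS([\beta])$ in polynomial time via cycling/decycling, test the conditions of Theorem~\ref{thm:stronger-version}(3) on it, and justify the one-representative shortcut by the equivalence of the ``for all'' form (2) and the ``there exists'' form (3). The only difference is that you spell out the $B_3$ case (reducing it to the single test $\inf([\beta])=-1$), which the paper's proof leaves implicit by treating only $\beta\in B_4$.
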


\begin{proof}
Let $\beta \in B_4.$ 
Let $\alpha \in [\beta]$. Applying cycling and decycling to $\LCF(\alpha)$ will give an element, say $\beta'$, of $\SSS([\beta])$ in polynomial-time. 
Theorem~\ref{thm:stronger-version} states that 
$\beta'$ satisfies the conditions in (3), which can be verified in polynomial-time, if and only if $\beta$ is conjugate to an ASQP braid but not conjugate to any SQP braid.
\end{proof}

Conjecture~\ref{conj:ASQP} claims that the converse of Proposition~\ref{cor:SSS-ASQP1} to be true for all $n\geq 5$. 

\begin{remark}\label{remark:wrong}
In Conjecture~\ref{conj:ASQP} `there exists a $\beta'$ such that' cannot be replaced with `every $\beta'$ has'.   
For example, the $5$-braids $\beta$ and $\beta'$ in 
the figure are conjugate. 
One can verify that both are in $\SSS(\beta)$ and also in their left-canonical forms. The first canonical factor in $\LCF([\beta])$ has length $3= n-2$ but none of the canonical factors in $\LCF(\beta')$ have length $3$. 
\begin{figure}[htbp]
\begin{center}
   \includegraphics*[width=7cm, bb=110 30 410 120]{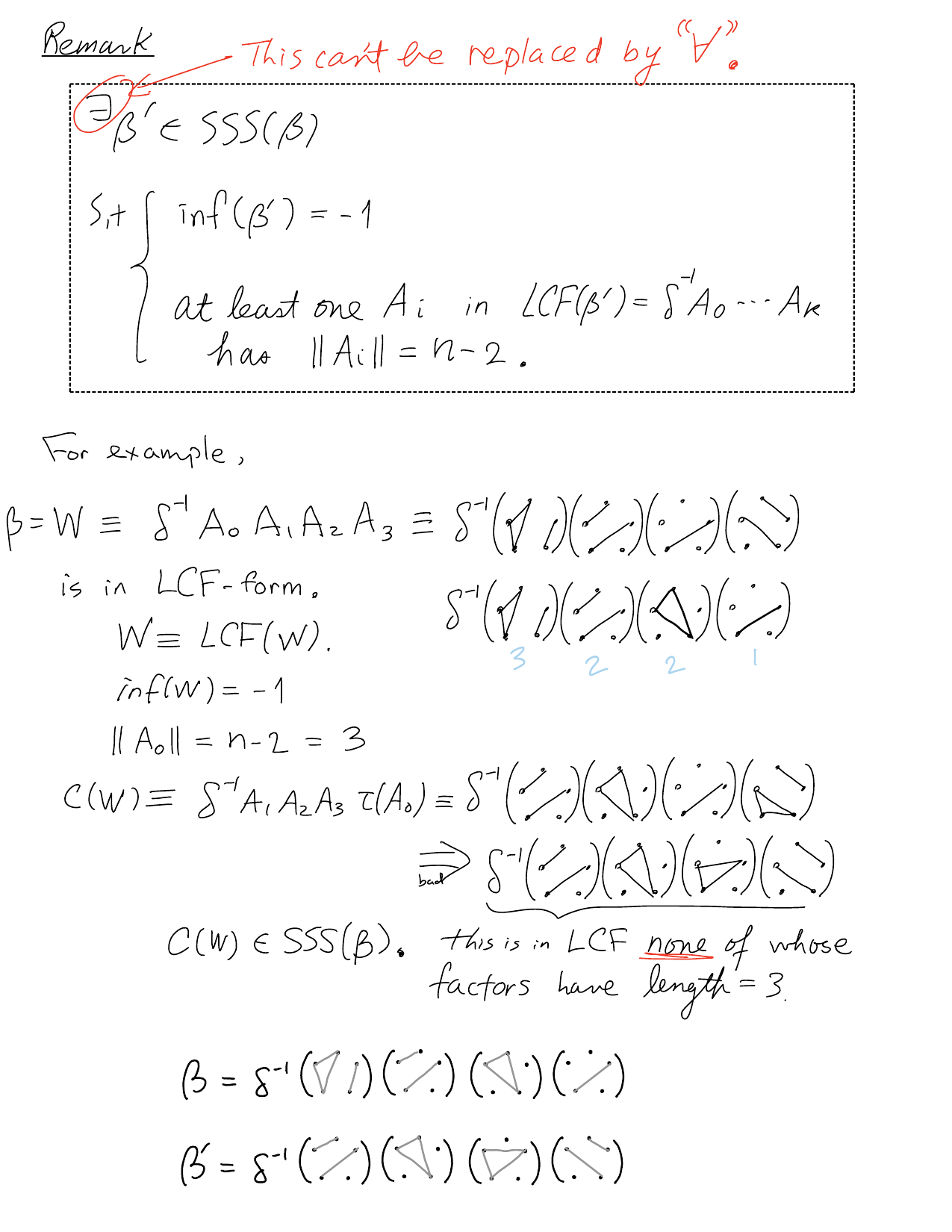} 
\end{center}
\end{figure}
It is interesting to point out that $\beta'\in\USS([\beta])$ is an element of the {\em ultra summit set} \cite{Gebhardt} of $\beta$; namely, there exists some natural number $d$ such that applying the cycling operation \cite{ElrifaiMorton} to $\beta'$ for $d$ times will give us back  $\beta'$. However, $\beta \notin \USS([\beta])$. 

\end{remark}

\section{Bounds of $\nb(\beta)$ via $\LCF(\beta)$}\label{Sec:7}

We will study the negative band numbers $\nb(\beta)$ in terms of the left-canonical form. 
A basic observation is: $\nb(\beta)\geq 1$ if and only if $\inf(\beta) \leq -1$. 
We start with a lower bound of $\nb(\beta)$. 

\begin{theorem}\label{thm:inequality}
Let $\beta\in B_n$ with $n\geq 3$ and $\inf(\beta) \leq -1$. Then 
$$
0< -\inf(\beta)= |\inf(\beta)| \leq \nb(\beta). 
$$
\end{theorem}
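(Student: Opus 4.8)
The plan is to bound $\inf(\beta)$ from below by the number of negative bands in an \emph{arbitrary} representing word, and then optimize over all words. Concretely, I would fix any word $W$ representing $\beta$ and write $W=g_1g_2\cdots g_\ell$, where each $g_i$ is a band generator $a\in\BG(B_n)$ or its inverse $a^{-1}$. The positive factors will contribute nonnegatively to the infimum and each negative factor will contribute at least $-1$, so summing these contributions should force $\inf(\beta)\geq -\nb(W)$. Taking the minimum over all representatives $W$ then yields $-\inf(\beta)\leq \nb(\beta)$, and since $\inf(\beta)\leq -1$ by hypothesis the quantity $|\inf(\beta)|=-\inf(\beta)$ is positive, giving the full chain.

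The two ingredients are (i) \emph{superadditivity of the infimum}, $\inf(\alpha\gamma)\geq \inf(\alpha)+\inf(\gamma)$ for all $\alpha,\gamma\in B_n$, and (ii) the per-generator estimates $\inf(a)\geq 0$ and $\inf(a^{-1})\geq -1$ for every band generator $a$. For (ii): each band generator is a canonical factor (Definition~\ref{def:CF}), so $e\leq a$ gives $\delta^0\leq a$ and hence $\inf(a)\geq 0$; moreover, by Corollary~\ref{cor:complementary} there is a canonical factor $C$ with $aC=\delta$, so $a^{-1}=C\delta^{-1}=C\,\delta^{-1}\,e$ exhibits $\delta^{-1}\leq a^{-1}$, i.e.\ $\inf(a^{-1})\geq -1$.

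For (i), I would argue directly from the definition, in which $\delta^{r}\leq\alpha$ means exactly $\alpha=P\delta^{r}Q$ with $P,Q$ positive. Writing $\alpha=P_1\delta^{r}Q_1$ and $\gamma=P_2\delta^{s}Q_2$ with $r=\inf(\alpha)$, $s=\inf(\gamma)$, and using the commutation relation $\mu\delta=\delta\tau(\mu)$ (whence $\mu\delta^{s}=\delta^{s}\tau^{s}(\mu)$ for all $s\in\mathbb Z$), I would slide the two central powers together:
\begin{equation*}
\alpha\gamma=P_1\delta^{r}(Q_1P_2)\delta^{s}Q_2=P_1\,\delta^{r+s}\,\tau^{s}(Q_1P_2)\,Q_2.
\end{equation*}
Since $\tau$ is an automorphism permuting the band generators, $\tau^{s}$ sends positive words to positive words for every $s$, so the outer factors $P_1$ and $\tau^{s}(Q_1P_2)Q_2$ are positive; this gives $\delta^{r+s}\leq\alpha\gamma$, hence $\inf(\alpha\gamma)\geq r+s$. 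Combining (i) applied inductively with (ii) yields $\inf(\beta)\geq\sum_{i=1}^{\ell}\inf(g_i)\geq -\nb(W)$, as desired.

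I expect the only genuine obstacle to be the clean verification of superadditivity (i); the rest is bookkeeping and an additive count. If preferred, (i) can instead be quoted as a standard Garside-theoretic fact, in which case the proof collapses to the per-generator estimate (ii) together with counting the negative factors.
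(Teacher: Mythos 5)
Your proof is correct and is essentially the paper's argument in a cleaner package: the paper takes a word realizing $\nb(\beta)$, replaces each negative band $a^{-1}$ by $C\delta^{-1}$ via Corollary~\ref{cor:complementary}, and slides the $\delta^{-1}$'s to the front using $\mu\delta^{-1}=\delta^{-1}\tau^{-1}(\mu)$, which is exactly your per-generator estimate $\inf(a^{-1})\geq -1$ combined with your superadditivity computation. Isolating superadditivity of $\inf$ as a lemma makes rigorous what the paper phrases informally as ``some $\delta^{-1}$ vanishes if cancellation occurs,'' so no changes are needed.
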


\begin{proof}
The first inequality follows from Lemma~\ref{Thm:SQ}. 

Suppose that $\beta$ is represented by a word $W$ in band generators with $\nb(\beta) (\geq 1)$ negative bands. Apply the LCF-algorithm to $W$. The algorithm first replaces each negative band with an $A \delta^{-1}$ for some canonical factor $A \in \CFn$, then shifting $\delta^{-1}$ to the left. During the shifting process it is possible that some $\delta^{-1}$ vanishes if cancellation occurs. 
Therefore, we obtain 
$-\nb(\beta) \leq \inf(\beta) <0$.
\end{proof}

Here is an upper bound for $\nb(\beta)$. 
\begin{theorem}\label{thm:3braid-nb}
Let $\beta\in B_n$ with $n\geq 3$ and $\inf(\beta) \leq -1$. 
\begin{itemize}
\item
If $\inf(\beta)<0 < \sup(\beta)$ 
then
$\nb(\beta)\leq (n-2) |\inf(\beta)|.$
\item
If $\inf(\beta)<\sup(\beta)\leq 0$ 
then
$$\nb(\beta)= -{\tt writhe}(\beta) \leq (n-2) |\inf(\beta)| + |\sup(\beta)|.$$
\end{itemize}
Moreover, when $n=3$ the equality holds for the both cases. 
\end{theorem}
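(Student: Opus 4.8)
\textbf{Proof plan for Theorem~\ref{thm:3braid-nb}.}
The plan is to bound $\nb(\beta)$ from above by explicitly constructing a word representative of $\beta$ out of the left-canonical form and then counting the negative bands that appear. I would start from $\LCF(\beta)\equiv\delta^{-r}A_1\cdots A_k$ where $r=|\inf(\beta)|\geq 1$. The central idea is to ``absorb'' the negative powers of $\delta$ into the canonical factors. Concretely, I would peel off one $\delta^{-1}$ at a time from the left, using the relation $\delta^{-1}A=\tau^{-1}(A)\delta^{-1}$ to commute it past canonical factors, and use the complementarity statement of Corollary~\ref{cor:complementary}: for each canonical factor $A$ there is $A'\in\CFn$ with $A'A=\delta$, so that $\delta^{-1}A=(A')^{-1}$. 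The key length bookkeeping is that for any $A\in\CFn$ the complementary factor $A'=\delta A^{-1}$ satisfies $||A'||=(n-1)-||A||$, hence $||(A')^{-1}||\leq n-1$; more to the point, writing $\delta^{-1}A$ as the inverse word $(A')^{-1}$ produces exactly $||A'||\leq n-2$ negative bands \emph{when} $A\neq e$, since $A'$ ranges over canonical factors of length at most $n-2$ whenever $A$ is nontrivial. Carrying out this absorption once for each of the $r$ copies of $\delta^{-1}$ yields a word whose negative bands number at most $(n-2)r=(n-2)|\inf(\beta)|$, giving the first bullet.

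For the second bullet, where $\sup(\beta)\leq 0$, the same LCF carries no leftover positive canonical factors after the $\delta^{-r}$ (indeed $k=\sup-\inf$ with $\sup\le 0$), so after absorbing $\delta^{-r}$ there are additional $|\sup(\beta)|$-many $\delta^{-1}$'s left unmatched. Each such extra $\delta^{-1}$ contributes $||\delta^{-1}||=||\delta||=n-1$ negative bands in the naive count, but a careful accounting via the identity $\delta^{-1}=(\sigma_1\cdots\sigma_{n-1})^{-1}$ written in band generators, combined with the first-bullet absorption, yields the stated bound $(n-2)|\inf(\beta)|+|\sup(\beta)|$. The equality $\nb(\beta)=-{\tt writhe}(\beta)$ in this regime should follow because when $\sup(\beta)\leq 0$ the braid $\beta$ is (up to conjugacy) a negative braid, so any band word for it consists entirely of negative bands; then $\nb(\beta)$ equals the total number of bands, which equals the negative of the exponent sum (writhe) in band generators. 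I would prove $\nb(\beta)\geq -{\tt writhe}(\beta)$ by the observation that the writhe is an invariant (a homomorphism $B_n\to\mathbb Z$ in band generators), so $-{\tt writhe}(\beta)=(\text{neg.\ bands})-(\text{pos.\ bands})\leq \nb(W)$ for every word $W$; combined with the upper bound this pins down equality.

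The main obstacle I anticipate is the \emph{sharpness} claim for $n=3$, asserting that the upper bounds are actually achieved. Here the lower bound $|\inf(\beta)|\leq\nb(\beta)$ from Theorem~\ref{thm:inequality} gives $|\inf(\beta)|=(n-2)|\inf(\beta)|$ when $n=3$, so the first bullet's bound is automatically tight from below, matching the upper bound; this is the clean case. For the second bullet with $n=3$ I would need to show $\nb(\beta)=|\inf(\beta)|+|\sup(\beta)|$, which should reduce to the $-{\tt writhe}$ formula: in $B_3$ the quantity $-{\tt writhe}(\beta)$ equals $|\inf(\beta)|+|\sup(\beta)|$ precisely when every canonical factor in the LCF has length $1$, and I would argue that in the $\sup\leq 0$ regime the reduction $\Red(\LCF(\beta))$ of Theorem~\ref{lem:KKL-reduced} produces a shortest word whose length realizes exactly this count. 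The delicate point is verifying that no further cancellation in band generators can reduce the negative band count below the writhe bound, i.e.\ that the constructed word is genuinely length-minimizing; for $n=3$ this is underwritten by Theorem~\ref{lem:KKL-reduced}, which guarantees $||\Red(\LCF(\beta))||$ is the minimal word length, so I would translate the minimal word length directly into the minimal negative band count using the writhe identity.
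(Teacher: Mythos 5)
Your proposal is correct and follows essentially the same route as the paper: absorbing each $\delta^{-1}$ into a canonical factor via the complement $A'A=\delta$ is exactly the reduction operation $\Red(\LCF(\beta))$ the paper uses, the bound $||A'||\leq n-2$ is the same bookkeeping, and the writhe-invariance argument plus the lower bound $|\inf(\beta)|\leq\nb(\beta)$ from Theorem~\ref{thm:inequality} are precisely how the paper handles the negative-braid case and the $n=3$ sharpness. The only cosmetic difference is that the paper absorbs the $\delta^{-1}$'s into the $r$ factors of largest word length (which only matters for the refined bound in Remark~\ref{rmk:by Ito}), whereas you absorb them left to right.
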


\begin{proof}
Put $r:=-\inf(\beta)=|\inf(\beta)|>0$ and $k:=\ell(\beta)\geq 0$. 
We have $$\sup(\beta)=-r+k.$$
Suppose that 
$\LCF(\beta)=\delta^{-r} A_1 \cdots A_k$ where $A_1,\cdots,A_k \in \CFn \setminus \{e, \delta\}$.  
By Corollary~\ref{cor:complementary} for each $A_i$ there exists $A_i' \in \CFn \setminus \{e, \delta\} $ such that $A_i \diamond A_i'=A_i A_i'=\delta$.

Recall the inner automorphism $\tau:B_n \to B_n$ defined by $\tau(\beta)=\delta^{-1}\beta\delta$. For a canonical factor $A\in \CFn$, $\tau(A)$ is diagrammatically counterclockwise $2\pi/n$ rotation of $A$ and $\tau(A^{-1})=(\tau(A))^{-1}$. 
Thus $\tau$ preserves the word length; $$||\tau(A)||=||A||=||\tau(A^{-1})||.$$

{\bf Case 1:} 
Suppose $\inf(\beta)<0 < \sup(\beta)$. 
We apply the reduction operation (Definition~\ref{def:Red}-(2)) to $\LCF(\beta)$ and obtain a $\Red(\LCF(\beta))$. 
Among the canonical factors $A_1, \cdots, A_k$ in $\LCF(\beta)$ the ones with $r$ largest word length, say $A_{i_1}, \cdots, A_{i_r}$, are replaced by the negative words ${(A'_{i_1})}^{-1}, \cdots, {(A'_{i_r})}^{-1}$ up to rotation by $\tau$, and $\delta^{-r}$ disappears in $\Red(\LCF(\beta))$. 
The rest of the $k-r$ canonical factors in $\LCF(\beta)$ are kept the same up to rotation by $\tau$.

Since every canonical factor in $\CFn \setminus \{e, \delta\}$ has word length at most $n-2$ and $\tau$ preserves the word lengths of canonical factors (and their inverses), 
the number of negative bands in $\Red(\LCF(\beta))$ is at most $(n-2)r$.
This gives $$\nb(\beta) \leq (n-2)r = (n-2)|\inf(\beta)|.$$ 

When $n=3$ we can say further.
Every canonical factor in $$\Cf\setminus\{e, \delta\}=\{a_1, a_2, a_3\}$$ has word length exactly $1$. This means each of ${(A'_{i_1})}^{-1},$ $\cdots,$ ${(A'_{i_r})}^{-1}$ contributes exactly one negative band and we obtain $$\nb(\beta) = r= |\inf(\beta)|.$$

{\bf Case 2:}
If $\inf(\beta)<\sup(\beta)\leq 0$ 
then $\beta$ can be represented by a negative word in band generators. 
Thus 
\begin{eqnarray*}
\nb(\beta) &=& -{\tt writhe}(\beta)\\
&=&
||\delta^{-r}|| - (||A_1||+ \cdots + ||A_k||) \\
&\leq& 
(n-1)r - k \\
&=&
 (n-2)|\inf(\beta)| + |\sup(\beta)|.
 \end{eqnarray*}
For 3-braids, since $||\delta||=2$ and $||A_i||=1$, we get $\nb(\beta)=-{\tt writhe}(\beta)=|\inf(\beta)|+|\sup(\beta)|$. 
\end{proof}

\begin{remark}\label{rmk:by Ito}
For {\bf Case 1} of the above proof we have the following slight improvement of the upper bound: 
For $\LCF(\beta)=\delta^{-r} A_1 \cdots A_k$, the negative band number satisfies 
\begin{equation}\label{eq:ito-upperbound}
\nb(\beta) \leq (n-1)r - \frac{r}{k}\sum_{i=1}^{k} || A_i || 
\end{equation}
where $|| A_i ||$ is the word length of the canonical factor in band generators.  
Thus, $\sum_{i=1}^{k} || A_i ||= {\tt writhe}(\beta)+ (n-1) |\inf(\beta)|$. 

In the spacial case where $|| A_1 ||=\cdots=|| A_k ||=1$, the right hand side of (\ref{eq:ito-upperbound}) will be equal to $r(n-2)$ which is the same upper bound as in Theorem~\ref{thm:3braid-nb}.

The bound (\ref{eq:ito-upperbound}) is obtained as follows. 
Since $A_i \diamond A_i' = \delta$, we have $||A_i'||=||\delta||-||A_i||=(n-1)-||A_i||.$ We also recall that $A_{i_1}, \cdots, A_{i_r}$ are chosen so that 
$\frac{1}{k}(||A_1||+\cdots+||A_k||) \leq \frac{1}{r}(||A_{i_1}||+\cdots+||A_{i_r}||)$ holds. Counting the negative bands in $\Red(\beta)$ we obtain 
\begin{eqnarray*}
\nb(\beta) &\leq& \sum_{j=1}^r ((n-1)-||A_i||) \\
&\leq& r(n-1) - \frac{r}{k}(||A_1||+\cdots+||A_k||).
\end{eqnarray*}
\end{remark}

\section{Shortest words and $\nb(\beta)$}

The negative band number $\nb(\beta)$ is realized by a shortest word: 

\begin{theorem}\label{lem:nb(beta)}
Let $W$ be a word representing $\beta \in B_n$. 
\begin{enumerate}[(a)]
\item
$W$ realizes $\nb(\beta)$ if and only if $W$ gives a shortest word for $\beta$.
\item
$W$ realizes $\nb([\beta])$ if and only if $W$ gives a shortest word for $[\beta]$.
\end{enumerate}
\end{theorem}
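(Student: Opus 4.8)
The plan is to prove both directions of each equivalence by relating word length directly to the count of negative bands, using the exponent sum (writhe) as the bridge. The key algebraic fact is that for any word $W$ in band generators, if $p(W)$ denotes the number of positive bands and $\nb(W)$ the number of negative bands, then the word length is $\|W\|_{\text{word}}=p(W)+\nb(W)$ while the exponent sum (writhe) is $e(\beta)=p(W)-\nb(W)$. Since $e(\beta)$ is a conjugacy invariant of $\beta$ (it depends only on $\beta$, not on the chosen word $W$), these two equations can be solved simultaneously to give
\begin{equation}\label{eq:nb-length-writhe}
\nb(W)=\tfrac{1}{2}\bigl(\|W\|_{\text{word}}-e(\beta)\bigr).
\end{equation}
This is the engine of the whole argument: because $e(\beta)$ is fixed once $\beta$ is fixed, minimizing $\nb(W)$ over all words $W$ representing $\beta$ is \emph{equivalent} to minimizing the word length $\|W\|_{\text{word}}$.

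For part (a), I would argue as follows. Fix $\beta\in B_n$ with exponent sum $e:=e(\beta)$. By \eqref{eq:nb-length-writhe}, every word $W$ representing $\beta$ satisfies $\nb(W)=\tfrac12(\|W\|_{\text{word}}-e)$, a strictly increasing affine function of $\|W\|_{\text{word}}$. Hence $W$ minimizes $\nb(\,\cdot\,)$ among words representing $\beta$ if and only if $W$ minimizes $\|\,\cdot\,\|_{\text{word}}$ among such words. Since $\nb(\beta)=\min_W\nb(W)$ by Definition~\ref{def-of-m} and a shortest word is by definition one achieving $\min_W\|W\|_{\text{word}}$, this gives both directions of (a) at once. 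I should be careful to note that $e(\beta)$ is genuinely an invariant of $\beta$: the band generator relations $a_{j,k}a_{i,j}=a_{i,j}a_{i,k}=a_{i,k}a_{j,k}$ and $a_{i,j}a_{k,l}=a_{k,l}a_{i,j}$ all preserve the signed count of generators, so exponent sum descends to a homomorphism $B_n\to\mathbb Z$.

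For part (b), the same identity \eqref{eq:nb-length-writhe} applies verbatim once one observes that the exponent sum is also a conjugacy invariant: if $\beta'=\gamma\beta\gamma^{-1}$ then $e(\beta')=e(\beta)$, since $e$ is a homomorphism. Therefore, writing $e:=e([\beta])$ for this common value, any word $W$ representing any conjugate $\beta'$ of $\beta$ still satisfies $\nb(W)=\tfrac12(\|W\|_{\text{word}}-e)$. Consequently minimizing $\nb(W)$ over all words representing all conjugates of $\beta$ is equivalent to minimizing $\|W\|_{\text{word}}$ over the same set, and the definitions of $\nb([\beta])$ and of a shortest word for $[\beta]$ give both directions.

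I expect the argument to be essentially short, and the only real subtlety—the ``main obstacle,'' such as it is—is the bookkeeping that confirms exponent sum is well-defined on $B_n$ (and hence on conjugacy classes) in the band-generator presentation, i.e. that it is invariant under the defining relations. This is a routine check, but it is the crucial hypothesis that makes \eqref{eq:nb-length-writhe} a statement about $\beta$ rather than about a particular word, and it is what allows the minimization over $\nb$ and over word length to be transferred to one another. Once that invariance is in hand, both parts follow immediately from the affine relationship in \eqref{eq:nb-length-writhe}.
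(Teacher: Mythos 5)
Your proof is correct and is essentially the same as the paper's: the paper compares a shortest word with an $\nb$-realizing word using $p+n = \|W\|$ and $p-n = {\rm writhe}(W)$, which is exactly your identity $\nb(W)=\tfrac{1}{2}\bigl(\|W\|-e(\beta)\bigr)$ combined with the invariance of the exponent sum. Your phrasing via the affine relation is a slightly cleaner packaging of the same two-inequality argument, so there is nothing to add.
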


\begin{proof}
We only show the statement (a). The statement (b) follows by a similar argument. 
Let $W$ be a shortest word representing $\beta$ and let $p$ (resp. $n$) be the number of positive (resp. negative) bands in the word $W$. 
Assume that a braid word $W'$ represents $\beta$ and realizes $\nb(\beta)$. 
Let $p'$ (resp. $n'$) be the number of positive (resp. negative) bands in the word $W'$.
We get $$n'=\nb(\beta)\leq n.$$

Since $W$ is a shortest word, 
$$
p'+n'=||W'|| \geq ||W||=p+n.
$$
Since $W$ and $W'$ belong to the same conjugacy class $[\beta]$, they have the same writhe. 
$$
p'-n'={\rm writhe}(W')={\rm writhe}(W)=p-n
$$
Therefore, $n' \geq n$. 

Above two paragraphs give $n=n'=\nb(\beta)$. 
\end{proof}

As a corollary of Theorem~\ref{lem:KKL-reduced}  and Lemma~\ref{lem:nb(beta)}, we observe the following: 

\begin{corollary}\label{cor:nb(beta)} 
Let $n\leq 4$ and $\beta\in B_n$. Every reduced word $W=\Red(\LCF(\beta))$ of the left-canonical form achieves $\nb(\beta)$.
\end{corollary}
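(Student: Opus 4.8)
The plan is to combine the two results cited in the statement directly, without introducing any new machinery. The target claim, Corollary~\ref{cor:nb(beta)}, asserts that for $n \leq 4$ and $\beta \in B_n$, any reduced word $W = \Red(\LCF(\beta))$ realizes $\nb(\beta)$. The two ingredients are Theorem~\ref{lem:KKL-reduced}, which says that $\Red(\LCF(\beta))$ is a shortest word for $\beta$, and Theorem~\ref{lem:nb(beta)}(a), which says that a word realizes $\nb(\beta)$ if and only if it is a shortest word for $\beta$. So the entire argument is a one-line chaining of these two facts.

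\begin{proof}
Let $W = \Red(\LCF(\beta))$ be a reduced word of the left-canonical form. By Theorem~\ref{lem:KKL-reduced}, since $n \leq 4$, the word $W$ minimizes the word length among all word representatives of $\beta$; that is, $W$ is a shortest word for $\beta$. By Theorem~\ref{lem:nb(beta)}(a), a word representing $\beta$ realizes $\nb(\beta)$ if and only if it is a shortest word for $\beta$. Applying this equivalence to $W$, we conclude that $W$ realizes $\nb(\beta)$.
\end{proof}

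First I would verify that the hypotheses of the two cited theorems are met: Theorem~\ref{lem:KKL-reduced} requires exactly $n \leq 4$, which matches the hypothesis of the corollary, and it delivers the inequality $\|\Red(\LCF(\beta))\| \leq \|W'\|$ for every word representative $W'$ of $\beta$, which is precisely the statement that $\Red(\LCF(\beta))$ is a shortest word. Theorem~\ref{lem:nb(beta)}(a) has no restriction on $n$, so it applies freely. The logical structure is therefore completely rigid: shortest word $\Leftrightarrow$ realizes $\nb(\beta)$, and $\Red(\LCF(\beta))$ is shortest, hence it realizes $\nb(\beta)$.

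There is essentially no obstacle here, since the corollary is a formal consequence of two already-established theorems; the only subtlety worth a moment's care is making sure that the notion of "shortest word" is the same in both theorems (minimal word length in band generators), which it is, as both are phrased in terms of $\|\cdot\|$. The restriction $n \leq 4$ enters solely through Theorem~\ref{lem:KKL-reduced} (the Kang--Ko--Lee result), which is the only place where the polynomial-time shortest-word guarantee is available; Theorem~\ref{lem:nb(beta)} itself holds for all $n$, so the corollary would extend automatically to any $n$ for which the reduced left-canonical form is known to be a shortest word.
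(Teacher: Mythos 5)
Your proof is correct and is exactly the paper's argument: the paper presents this corollary as an immediate consequence of Theorem~\ref{lem:KKL-reduced} (the reduced left-canonical form is a shortest word for $n\leq 4$) combined with Theorem~\ref{lem:nb(beta)}(a) (shortest words are precisely the words realizing $\nb(\beta)$). Nothing is missing.
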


For $n\geq 5$ the result does not hold. There exists words $\beta \in B_n$ for $n\geq5$ such that $Red(LCF(\beta))$ is not a shortest word and therefore by Theorem~\ref{lem:nb(beta)} does not achieve the negative band number $\nb(\beta)$. An algorithm for computing $\nb(\beta)$ for $n\geq 5$ will be explored in future work.

\section{Behavior of $\nb(\beta)$ under stabilization}

For a braid word $W$ let $F_W$ denote the Bennequin surface \cite{Bennequin} associated to $W$. If $W$ is an $n$-braid the surface $F_W$ is obtained by $n$ parallel disks joined by twisted bands corresponding to the word $W$. 
For a braid $\beta$ define $$\chi(F_\beta):=\max\left\{ \chi(F_W) \mid W \in \beta \right\},$$ 
where $W\in \beta$ means $W$ is a word representing the braid $\beta$. 
Thus, $\chi(F_\beta)$ is achieved by a shortest word $W$ representing $\beta$.

Immediately the Bennequin inequality (\ref{eq:Bennequin-inequality}) extends to the following:
\begin{equation}\label{eq:long-Bennequin}
\sel(W)\leq \sel(\beta) \leq \SL(K)\leq -\chi(K) \leq -\chi(F_\beta) \leq \chi(F_W).
\end{equation}
Recall the {\em defect} of the Bennequin inequality 
$\D(K):=\tfrac{1}{2}(-\chi(K)-\SL(K))$ which gives a lower bound for the negative band number
$\D(K) \leq \nb(K).$
Similarly we may define 
$\D(\beta):=\frac{1}{2}(-\chi(F_\beta)-\sel(\beta))$. Then we have: 
$$
\begin{array}{ccccc}
&& \nb(K) &&\\
&\leq & & \leq &\\
\D(K) & & & & \nb(\beta)\\
&\leq & & \leq &\\
&& \D(\beta) &&
\end{array}
$$
For $W\in \beta \in K$ the following statements are in general independent: 
\begin{itemize}
\item
$W$ realizes $\chi(K)$, i.e., $\chi(F_W)=\chi(K).$
\item
$W$ realizes $\SL(K)$, i.e., $\sel(W)=\sel(\beta)=\SL(K).$
\end{itemize}
See Remark~\ref{final-remark} for such examples. 
By (\ref{eq:long-Bennequin}) it is easy to see that:
\begin{itemize}
\item
If $W$ realizes $\D(K)$ then $W$ realizes both $\chi(K)$ and $\SL(K)$. 
\end{itemize}
Therefore, it is interesting to find a braid word that simultaneously realizes $\chi(K), \SL(K), \D(K)$ and $\nb(K)$.

We first study how $\nb(\beta)$ behaves under braid stabilization, a basic braid operation. 
For the usual $\pm$-stabilization $B_n \ni \beta \mapsto \beta \sigma_n^\pm \in B_{n+1}$ it is easy to see for many cases 
$$
\nb(\beta)=\nb(\beta \sigma_n) = \nb(\beta \sigma_n^{-1})-1.
$$
However for certain stabilizations this is not always the case. 
We give such stabilizations in the following:

\begin{figure}[h]
 \centering
\includegraphics*[width=12cm]{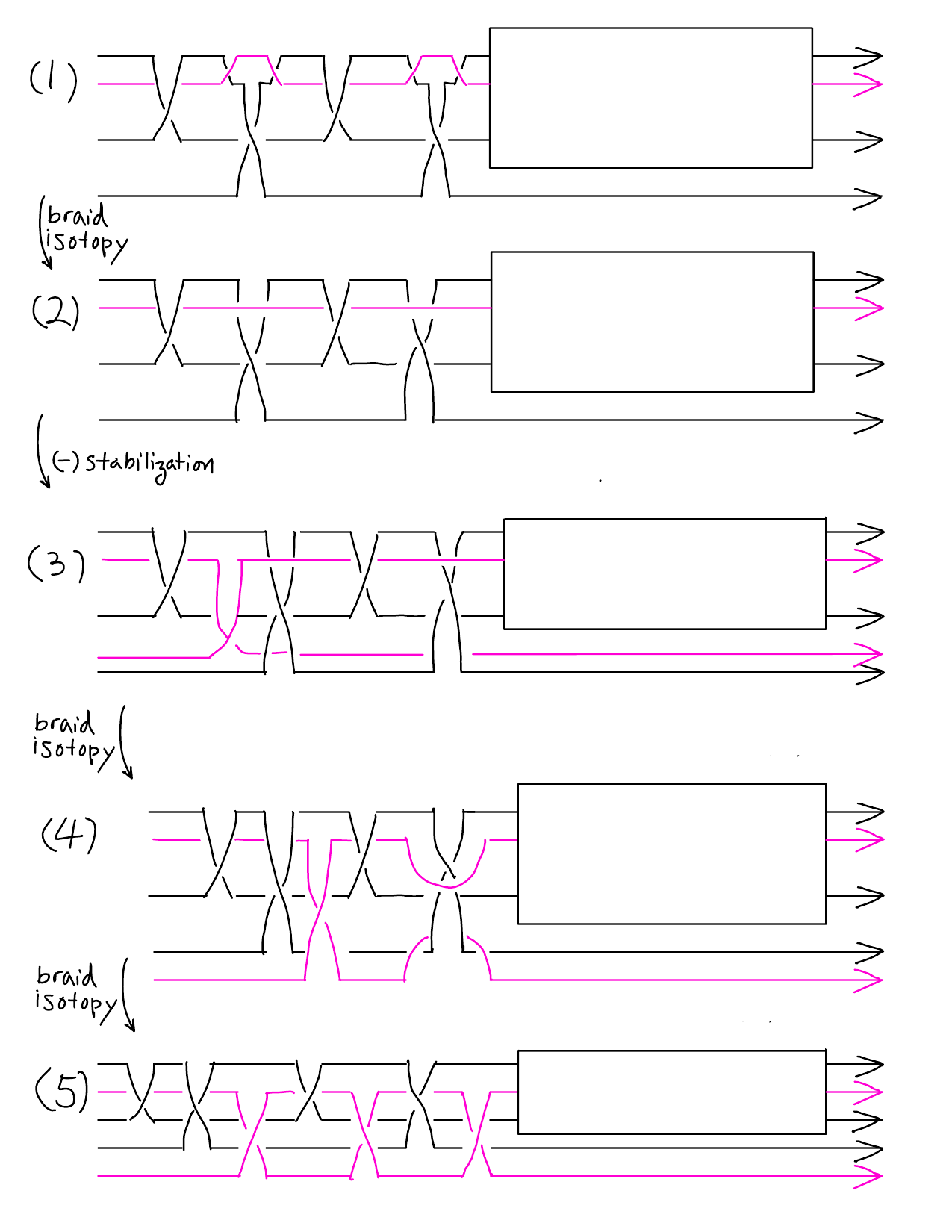}
 \caption{The effect of a negative tunnel-stabilization.}
\label{fig:stabilization-sequence}
\end{figure}
Consider a braid in Figure~\ref{fig:stabilization-sequence} (1), where the box contains some braiding. 
We may assume the braid bounds a Seifert surface $\Sigma$ containing a tunnel (Figure~\ref{fig:tunnel3} (1)) through witch other part of the Seifert surface (depicted by the green arrow) goes.  The idea of such a tunnel surface came from Ko and Lee's work in \cite{KoLee}. 
As in the passage $(1) \rightarrow \cdots \rightarrow (6)$ of Figure~\ref{fig:tunnel3} we can locally isotope the surface. 
\begin{figure}[h]
 \centering
\includegraphics[width=10cm]{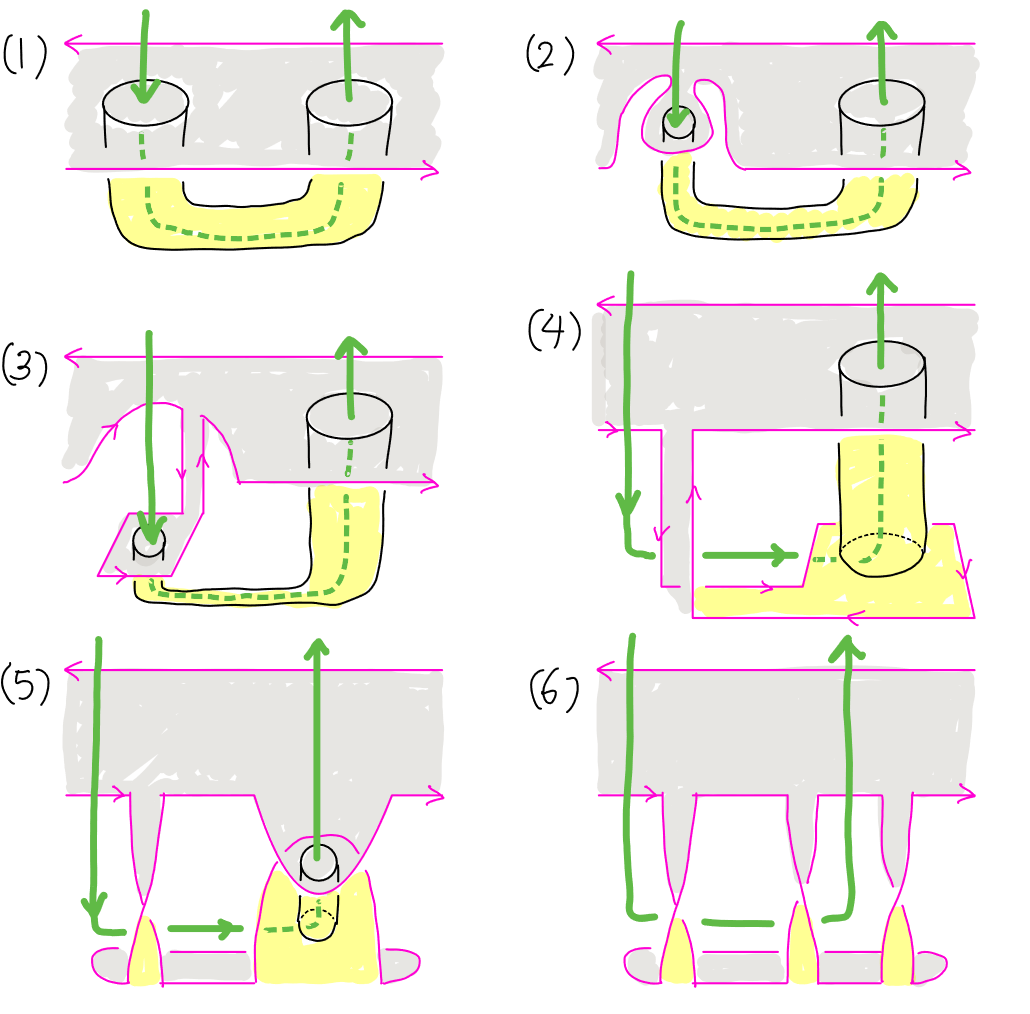}
 \caption{From (1) to (3) the pink strand is pinched and dragged through the tunnel. From (3) to (4) the strand is flipped performing a Reidemeister I move introducing the stabilization. From (4) to (5) the tunnel is being shortened performing a Reidemeister move II between the top and bottom strand. From (5) to (6) the tunnel is represented by two bands positive and negative bands.}
\label{fig:tunnel3}
\end{figure}

The effect of the isotopy on the boundary $\partial \Sigma$ can be understood as a negative stabilization and braid isotopy as shown in Figure~\ref{fig:stabilization-sequence}. 
We call it a {\em negative tunnel-stabilizaion}. 
In stead of using the {\em left} hole of the tunnel (Figure~\ref{fig:tunnel3} (2)), if we use the {\em right} hole we have a positive stabilization, which we call a {\em positive tunnel-stabilzation}. 

Note that a negative tunnel-stabilization preserves the number of negative bands however it decreases the number of positive bands by $1$. 
(Compare  Figure~\ref{fig:stabilization-sequence} (1) and (5).)
Thus the total number of bands decreases by one. 
This observation gives us the following: 

\begin{proposition}\label{prop:key-obs}
Let $W_\pm$ be the braid word obtained by a positive/negative tunnel-stabilization of $W$. 
\begin{itemize} 
\item 
A positive tunnel-stabilization decreases the negative band number  by $1$, and a negative stabilization preserves it; 
$$\nb(W) = \nb(W_+) +1 = \nb(W_-).$$ 
\item 
A tunnel-stabilization increases the Euler characteristic of the Bennequin surface by $2$. Namely, 
$$\chi(F_{W_-})= \chi(F_{W_+})=\chi(F_W)+2.$$
\end{itemize}
\end{proposition}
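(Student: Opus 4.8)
The plan is to verify Proposition~\ref{prop:key-obs} by carefully analyzing the local pictures in Figures~\ref{fig:stabilization-sequence} and~\ref{fig:tunnel3}, treating each of the two bullet points separately. The entire content is a local computation: both tunnel-stabilizations are defined as an explicit isotopy of the Seifert surface $\Sigma$ supported in a neighborhood of the tunnel, so all band-counting and Euler characteristic bookkeeping happens inside that neighborhood and the rest of $W$ is untouched.

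For the first bullet, I would track the bands through the passage $(1)\to(6)$ of Figure~\ref{fig:tunnel3}. The key observation, already stated in the paragraph preceding the proposition, is that a negative tunnel-stabilization replaces the local picture by two bands, one positive and one negative, while eliminating one of the original positive bands; hence the negative band count is unchanged, giving $\nb(W)=\nb(W_-)$. For the positive tunnel-stabilization one uses the right hole of the tunnel instead of the left (Figure~\ref{fig:tunnel3}~(2)), and the mirror-symmetric version of the same isotopy produces one positive band in place of a negative one, so the negative band count drops by exactly $1$, yielding $\nb(W)=\nb(W_+)+1$. The two equalities are then assembled into the displayed chain $\nb(W)=\nb(W_+)+1=\nb(W_-)$. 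Here I would be careful to phrase the conclusion in terms of $\nb$ of \emph{words} (not braids), since the statement is about the specific word obtained from the stabilization, so no minimization over representatives is involved and the band-counting is literal.

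For the second bullet, I would compute the change in $\chi(F_W)$ directly from the formula $\chi(F_W)=n-(\text{number of bands})$, where $n$ is the number of parallel disks (the braid index). A tunnel-stabilization is a $\pm$-stabilization on the boundary, so it increases the braid index by $1$, adding one disk. By the band count from the first bullet, the total number of bands \emph{decreases} by $1$ under a tunnel-stabilization (the negative case trades one positive band for a positive-negative pair, a net increase of one band in the surface picture, but the relevant bookkeeping is that relative to the new disk the band total changes so that $\chi$ rises by $2$). I would pin down the exact band count from Figure~\ref{fig:stabilization-sequence}~(1) versus~(5) and confirm that in both the positive and negative cases the net effect on $\chi(F)$ is $+2$, so that $\chi(F_{W_-})=\chi(F_{W_+})=\chi(F_W)+2$. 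Symmetry between the left-hole and right-hole constructions makes the positive case identical after reflection.

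The main obstacle will be getting the band-count bookkeeping exactly right, since the isotopy in Figure~\ref{fig:tunnel3} simultaneously introduces a Reidemeister~I move (the stabilization, $(3)\to(4)$), a Reidemeister~II move shortening the tunnel ($(4)\to(5)$), and a replacement of the tunnel by a positive-negative band pair ($(5)\to(6)$). Each move changes both the strand/disk count and the band count, and I must confirm that these changes combine to give precisely $+2$ for $\chi$ and the claimed $\pm 1$ or $0$ for $\nb$. The cleanest way to avoid error is to compute $\chi(F_W)$ and the negative band count on the explicit before/after words read off from Figure~\ref{fig:stabilization-sequence}, rather than tracking each intermediate Reidemeister move; once those two endpoint pictures are written as words, both assertions reduce to counting letters and applying $\chi(F_W)=n-\#\{\text{bands}\}$.
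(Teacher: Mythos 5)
Your proposal is correct and matches the paper's (essentially proof-less) treatment: the paper derives the proposition from exactly the observation you describe, namely that comparing Figure~\ref{fig:stabilization-sequence}~(1) and~(5) the negative band count is preserved (negative case) or drops by one (positive case, via the mirror right-hole construction), the total band count drops by one, and the strand count rises by one, so $\chi(F_W)=\#\{\text{disks}\}-\#\{\text{bands}\}$ increases by $2$. Your parenthetical in the second bullet momentarily asserts a \emph{net increase} of one band, which contradicts the correct count you use everywhere else, but since you explicitly defer to counting letters in the endpoint words this is a slip of phrasing rather than a gap.
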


\begin{theorem}\label{prop:stabilization example}
Let $n\geq 1$. There exists a sequence of links $\{ L_n \}_{n\geq 1}$ such that it requires $n$ stabilizations for a minimal braid representative $\beta_n$ to achieve the negative band number $\nb(L_n)$ and the defect $\D(L_n)$. 
\end{theorem}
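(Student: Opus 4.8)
The plan is to construct an explicit family $\{L_n\}$ realizing the claimed gap, building directly on the tunnel-stabilization machinery of Proposition~\ref{prop:key-obs}. First I would start from a single strongly quasipositive braid $\beta_0$ on a fixed number of strands whose closure is a knot/link $L_0$ with $\nb(L_0)=0$, but which admits a tunnel as in Figure~\ref{fig:tunnel3}---that is, a Seifert surface with a tunnel through which another sheet of the surface passes. The key move is to apply $n$ successive \emph{negative} tunnel-stabilizations to $\beta_0$, producing a braid $\beta_n$ on $(\text{initial index})+n$ strands whose closure $L_n$ is, by construction, a link that is still SQP (hence $\nb(L_n)=0$), since negative tunnel-stabilization corresponds to an ambient isotopy of the underlying link together with a Reidemeister~I move on the surface. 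By the first bullet of Proposition~\ref{prop:key-obs}, each negative tunnel-stabilization preserves the negative band number of the \emph{word}, so $\nb(\beta_n)=\nb(\beta_{n-1})=\cdots=\nb(\beta_0)$; I must arrange the base case so that $\nb(\beta_0)$ is already strictly larger than $\nb(L_n)=0$ by at least the right amount, or, more cleanly, track the accumulation of the defect.

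The cleaner route is to run the argument through the defect $\D$ rather than trying to control $\nb(\beta_n)$ of the specific representative directly. I would compute $\D(L_n)$ using the Bennequin inequality chain \eqref{eq:long-Bennequin} and the fact that negative stabilizations do not increase the self-linking number while the Euler characteristic of the link is an isotopy invariant. Concretely, each negative tunnel-stabilization increases $\chi(F_W)$ by $2$ (second bullet of Proposition~\ref{prop:key-obs}) while decreasing the writhe (and hence the self-linking number of that word) so that the \emph{minimal braid representative} $\beta_n$ fails to realize $\SL(L_n)$; the failure accumulates linearly in $n$. Since $\D(K)\le\nb(K)\le\nb(\beta)$ and $\D(\beta)\le\nb(\beta)$, I would show that the minimal-braid-index representative $\beta_n$ has $\nb(\beta_n)-\nb(L_n)=n$ by exhibiting that undoing the $n$ negative tunnel-stabilizations (which each raise $\chi$ by $2$) is exactly what is needed to restore the strongly quasipositive, band-number-minimizing representative, and that each undoing removes precisely one unit of excess negative-band count.

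The key steps, in order, are: (1) fix the base link $L_0$ and an explicit braid word $\beta_0$ with a visible tunnel and verify $\nb(L_0)=0$, i.e.\ $L_0$ is strongly quasipositive with a known SQP representative; (2) define $L_n$ and $\beta_n$ as the result of $n$ negative tunnel-stabilizations and check that $\beta_n$ is a \emph{minimal braid index} representative of $L_n$ (so that no cheaper representative with fewer strands exists); (3) apply Proposition~\ref{prop:key-obs} iteratively to get $\nb(\beta_n)=\nb(\beta_0)+(\text{correction})$ and $\chi(F_{\beta_n})=\chi(F_{\beta_0})+2n$; (4) compute $\nb(L_n)$ and $\D(L_n)$ from the SQP representative obtained by reversing the tunnel-stabilizations, concluding $\nb(\beta_n)-\nb(L_n)=n$ and $\D(L_n)=\nb(L_n)$.

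The hard part will be step (2): proving that $\beta_n$ really is a \emph{minimal braid index} representative of $L_n$, rather than merely a convenient one. Asserting the gap $\nb(\beta_n)-\nb(L_n)=n$ is vacuous unless $\beta_n$ cannot be replaced by a lower-index braid that already achieves $\nb(L_n)$; I would need a lower bound on the braid index of $L_n$ (for instance via the HOMFLY/Morton--Franks--Williams inequality or an explicit genus/Euler-characteristic count showing the stabilizations are essential) matching the strand count of $\beta_n$. A secondary subtlety is ensuring that each negative tunnel-stabilization is genuinely \emph{essential}, i.e.\ that the resulting braids are pairwise non-isotopic after destabilization fails---otherwise the index could collapse. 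I expect this minimal-index verification, tying the topological invariants of the constructed family to the strand count, to be the main obstacle, whereas the band-number bookkeeping follows mechanically from Proposition~\ref{prop:key-obs}.
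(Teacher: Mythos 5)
There is a genuine gap, and it is fatal to the construction rather than a fixable detail. You build $\beta_n$ by applying $n$ tunnel-stabilizations to a base braid $\beta_0$. But a tunnel-stabilization is, on the boundary, an ordinary braid stabilization plus braid isotopy: it does not change the link type and it \emph{increases} the strand count by one. Consequently all of your links $L_n$ are the same link $L_0$, and your $\beta_n$ lives on $(\text{index of }\beta_0)+n$ strands while $\beta_0$ itself represents the same closure on fewer strands. So $\beta_n$ can never be a minimal braid index representative of $L_n$ --- the very property you flag as ``the hard part'' of step (2) is impossible by construction, not merely hard. Moreover, since negative tunnel-stabilization preserves the negative band count of the word, your gap $\nb(\beta_n)-\nb(L_n)$ is at most $\nb(\beta_0)-0$, a constant independent of $n$ (and if $\beta_0$ is chosen SQP as in your step (1), the gap is zero). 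The defect route does not rescue this: $\D$ is a link invariant, hence constant along your sequence. You cannot produce a gap that grows linearly in $n$ by stabilizing a fixed link.

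The paper runs the construction in the opposite direction, and that reversal is the whole content of the theorem. It defines genuinely distinct links $L_n$ by explicit words $W_n=a_{1,3}^{-1}\gamma_1\cdots\gamma_n$ on $n+3$ strands, certifies that $n+3$ \emph{is} the braid index via the Morton--Franks--Williams inequality (computing the $v$-span of the HOMFLY-PT polynomial by a skein recursion), computes $\chi(L_n)=-5n+2$ via the degree span of the Alexander polynomial of an explicit Seifert matrix, and only \emph{then} applies $n$ positive tunnel-stabilizations to $W_n$, each of which strictly decreases the negative band count by one. The resulting word on $2n+3$ strands has $\nb=3n+1$, which is pinned to $\nb(L_n)$ from below by the defect $\D(L_n)=3n+1$ (using $\SL(L_n)=\sel(\beta_n)$ via the generalized Jones conjecture at minimal braid index). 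The stabilizations go \emph{up} from the minimal representative to reach the $\nb$-realizing one; your proposal goes down, which collapses both the sequence of links and the gap. To repair your argument you would essentially have to discard the base-point-and-stabilize scheme and instead exhibit an infinite family of distinct links with controlled braid index, Euler characteristic, and self-linking number, which is exactly what the paper's three claims accomplish.
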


\begin{proof}
Let 
$\gamma_j:=  (a_{1,2}\ a_{2, 3+j}\ a_{1,2}^{-1}\ a_{1,3}^{-1})^2,$
see Figure~\ref{fig:sequence}. 
For $n=1, 2, \dots,$ consider the braid $\beta_n \in B_{n+3}$ represented by the word 
$$W_n := a_{1,3}^{-1}\ \gamma_1\ \gamma_2 \dots \gamma_n.$$
Let $L_n$ denote the link type of the braid closure of $\beta_n$. 
\begin{figure}[h]
 \centering
\includegraphics*[width=10cm, bb=15 200 440 780]{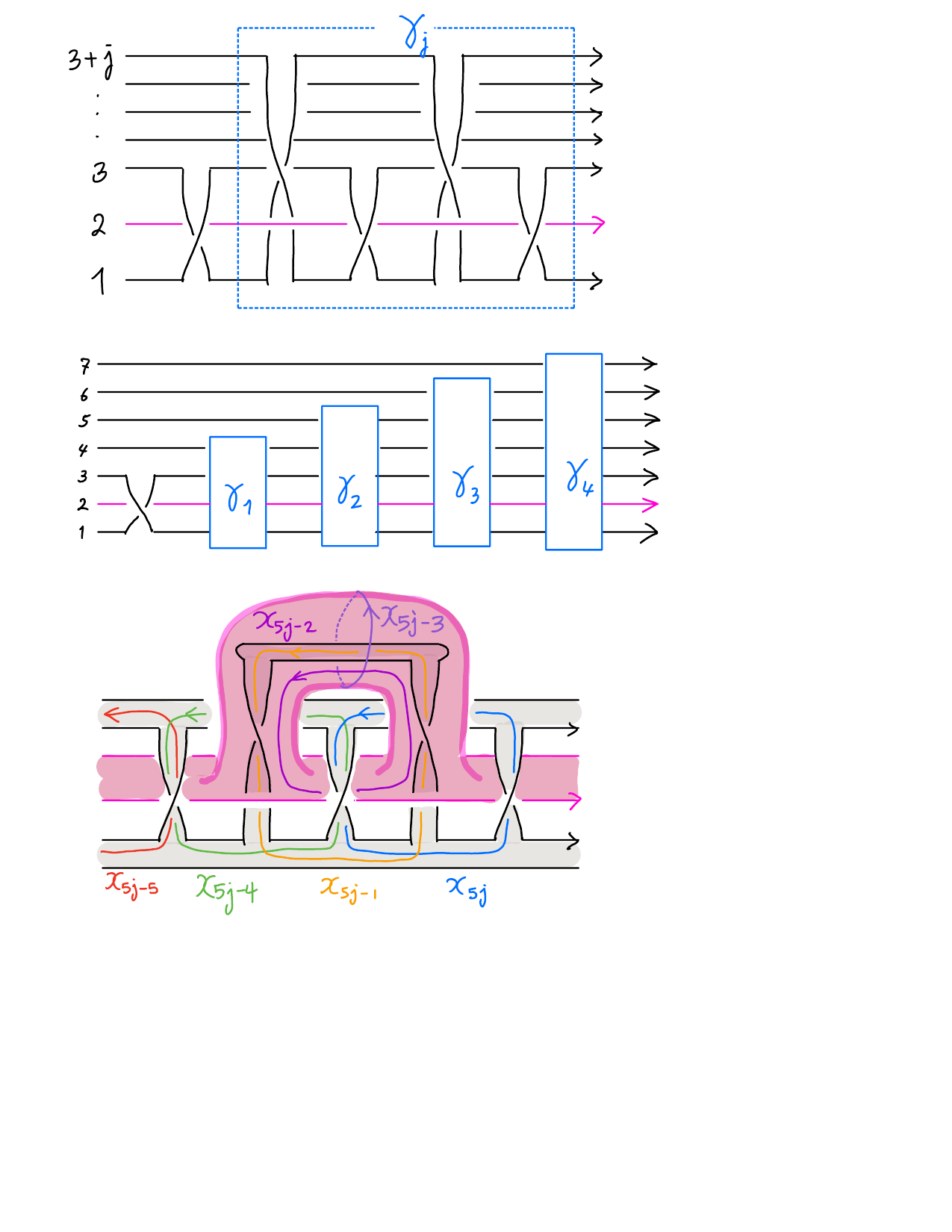}
 \caption{(Top) The word $\gamma_j$. 
 (Middle) The braid $\beta_4$.
 (Bottom) Part of the Seifert surface $\Sigma_n$ of $L_n$ involving $\gamma_j$.}
\label{fig:sequence}
\end{figure}

\begin{claim}\label{claim:braid index}
The  braid index of $L_n$ is $n+3$. 
Thus, the braid representative $\beta_n$ realizes the braid index.
\end{claim}

Note that $L_n$ contains an unknot component $U$ (colored pink in Figure~\ref{fig:sequence}) and the  component denoted $K_n = L_m \setminus U$.
It is enough to show that the braid index, $i(K_n)$, of $K_n$ is $n+2$. 
To this end let
$$ \gamma'_j := (\ a_{1,2+j}\ a_{1,2}^{-1}\ )^2$$
and consider the braid $\beta'_n\in B_{n+2}$ represented by the word
$$ W_n':= a_{1,2}^{-1}\ \gamma'_1\ \gamma'_2 \dots \gamma'_n.$$
Then the braid closure $\widehat{\beta_n'}$ yields $K_n$. 
Clearly $i(K_n) \leq n+2$. 

To study the lower bound of $i(K_n)$, we use the Morton-Franks-Williams inequality \cite{FW, Morton-seifert}. 
Consider the HOMFLY-PT polynomial $P(K)=P_K(v,z)$ of a link $K$ defined by the normalization $P({\rm unknot})=1$ and the skein relation 
$$ v^{-1}P(K_+) + vP(K_-) = zP(K_0).$$ 
Here $K_\epsilon$ is the link type of a diagram of $K$ whose particular crossing is locally changed according to the pictures in Figure~\ref{fig:HOMFLY}. 
Let $d^+(K)$ (resp. $d^-(K)$) be the largest (resp. smallest) power of $v$ in $P(K)$. 
The Morton-Franks-Williams inequality \cite{FW, Morton-seifert} states that
$$
\tfrac{1}{2}(d^+(K)- d^-(K))+1 \leq i(K).
$$

\begin{figure}[h]
 \centering
\includegraphics[width=10cm]{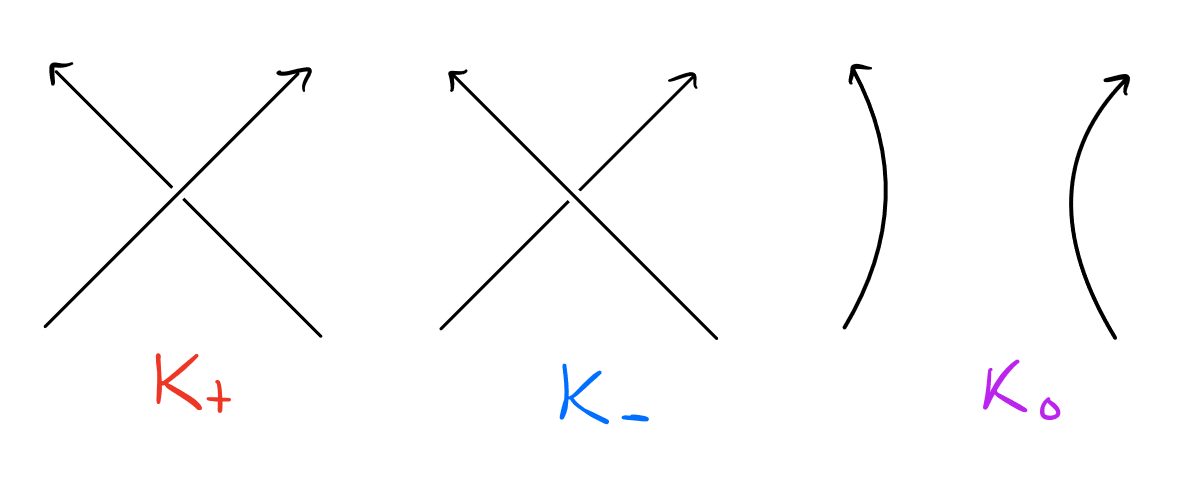}
 \caption{Local pictures of diagrams $K_+$, $K_-$ and $K_0$.}
\label{fig:HOMFLY}
\end{figure}
\begin{figure}[h]
 \centering
\includegraphics[width=10cm]{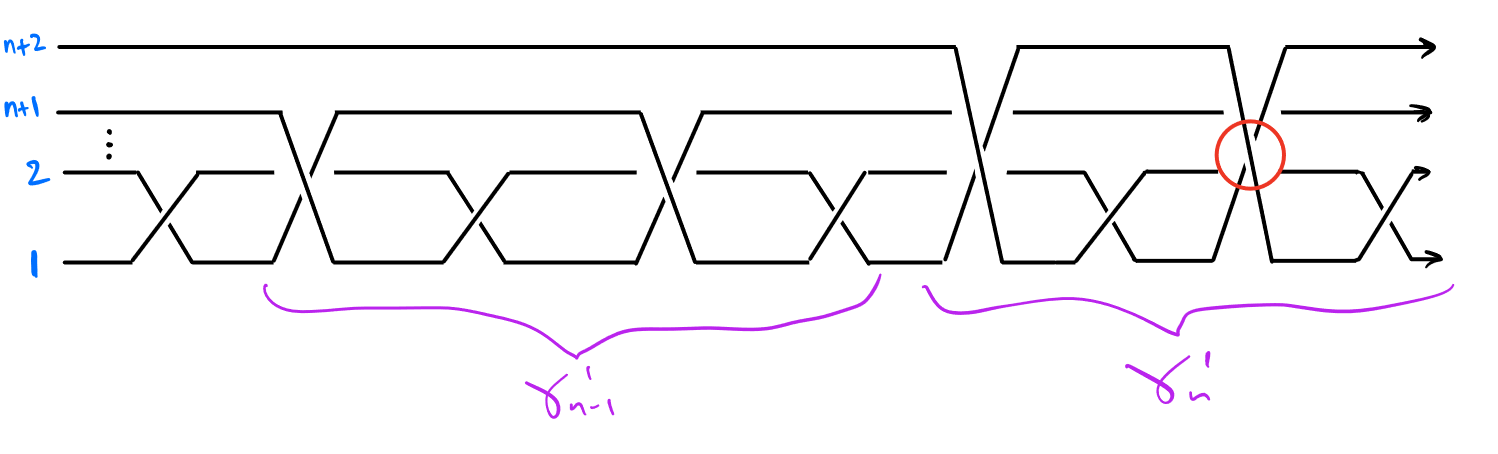}
 \caption{The braid $\beta_n'$. We set $K_+=K_n = \widehat{\beta_n'}$.}
\label{fig:HOMFLY-crossing}
\end{figure}

In order to compute the HOMFLY-PT polynomial of $K_n$, we focus on the positive crossing circled in Figure~\ref{fig:HOMFLY-crossing} and view $K_n$ as $K_+$. 
The resulting $K_-$ and $K_0$ are depicted in Figures~\ref{fig:HOMFLY-negative} and \ref{fig:HOMFLY-zero}, respectively.

\begin{figure}[h]
 \centering
\includegraphics[width=10cm]{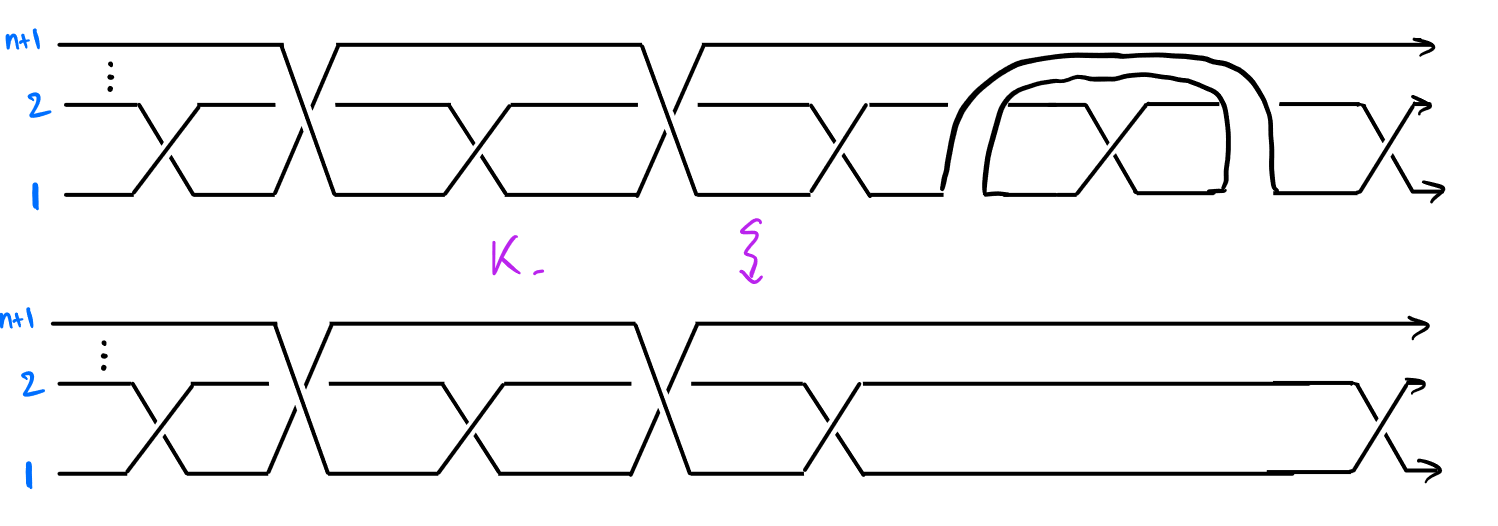}
\caption{The negative-resolution $K_-= \widehat{\beta_{n-1}' \sigma^{-1}}$.}
\label{fig:HOMFLY-negative}
\end{figure}

\begin{figure}[h]
\centering
\includegraphics[width=10cm]{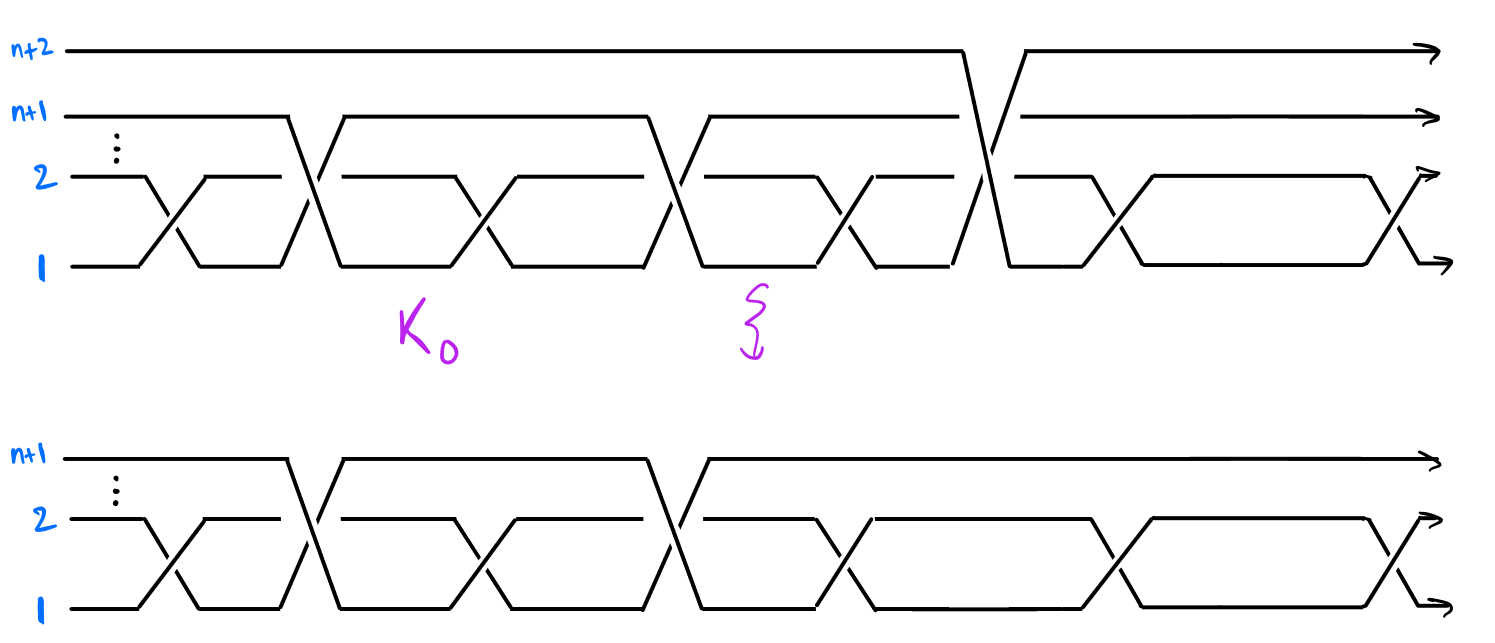}
\caption{The zero-resolution $K_0= \widehat{\beta_{n-1}' \sigma^{-2}}$.}
\label{fig:HOMFLY-zero}
\end{figure}

Let $\sigma= a_{1,2}$. Applying the skein relation we obtain the following:
$$ P(K_n) = v^2 P(\widehat{\beta_{n-1}' \sigma^{-1}}) + vz P(\widehat{\beta_{n-1}' \sigma^{-2}})$$
Recursively applying the skein relation we obtain a general formula:
$$ P(K_n)= v^n \sum_{i=1}^n {n\choose i} v^{n-i}z^i P(\widehat{\sigma^{-n-1-i}})$$
It is a straight-forward calculation to verify that $d^+(\widehat{\sigma^{-k}}) = -k+1$ and $d^-(\widehat{\sigma^{-k}}) =-k-1$. Plugging this in we get
\begin{eqnarray*}
d^+(K_n) &=& n + (n-0) + d^+(\widehat{\sigma^{-n-1}}) = 2n - n = n,\\
d^-(K_n) &=& n + (n-n) + d^-(\widehat{\sigma^{-2n-1}}) = n + (-2n-2)= - n-2.
\end{eqnarray*}
Thus we obtain the following lower bound of the braid index for $K_n$:
$$ \tfrac{1}{2}(d^+(K_n) - d^-(K_n)) + 1 = \tfrac{1}{2}(2n +2)+ 1 = n+2 \leq i(K_n)$$
This concludes Claim~\ref{claim:braid index}.

Consider a Seifert surface, $\Sigma_n$, of $L_n$ assembled by the pieces depicted in the bottom sketch in Figure \ref{fig:sequence}. It has $\chi(\Sigma_n)= -5n +2$. 

\begin{claim}\label{second claim}
$\chi(L_n)= -5n+2$.
\end{claim}

Pick an appropriate basis $\{x_1, \dots, x_n\}$ of $H_1(\Sigma_{5n})$ according to the bottom sketch in Figure~\ref{fig:sequence}. We then compute the associated Seifert matrix 
$$
A_n := \begin{bmatrix}
B & C & 0 &  0 & \dots & 0\\
0 & B & C &  0       &  \dots & 0\\
0 & 0 & B & C &\ddots & \vdots \\
0 & \vdots & 0 & \ddots & \ddots & 0 \\
\vdots & \vdots & \vdots & \ddots & B & C\\
0 & 0 & 0 & \dots & 0 & B
\end{bmatrix} $$ of size $5n \times 5n$, where
$$ B:=\begin{bmatrix}
1 & 0 & -1 & -1 & -1\\
0 & 0 & 0 & 1 & 0\\
-1 & 1 & 0 & 0 & 1 \\
0& 1& 0 & -1& 0\\
0& 0& 1& 1& 1
\end{bmatrix}
\quad \mbox{ and } \quad 
C:= 
\begin{bmatrix}
0&0&0&0&0\\
0&0&0&0&0\\
0&0&0&0&0\\
0&0&0&0&0\\
-1&0&0&0&0\\
\end{bmatrix}. 
$$

Recall the Alexander polynomial of $L_n$ is $\Delta_{L_n}(t)= \det(A_n - tA_n^T)$, where $A_n^T$ is the transpose of $A_n$. 
We claim that the degree-span of $\Delta_{L_n}(t)$ is $ 5n$, which implies that 
any Seifert surface $S$ of $L_n$ has 
$5n \leq \rk (H_1(S))$. 
Since our surface $\Sigma_n$ has $\rk(H_1(\Sigma_n))=5n$ it means that $\Sigma_n$ realizes the Euler characteristic $\chi(L_n)$. 
That is, 
$$ \chi(L_n) = \chi(\Sigma_n)= -5n + 2.$$

The lowest exponent occurring in $\Delta_{L_n}(t)$ is zero since 
$A_n$ is a block upper triangular matrix and
$
\Delta_{L_n}(0) = \det(A_n)= (\det(B))^n= 1^n=1.
$
On the other hand, the highest degree term of $\Delta_{L_n}(t)$ is exactly $
\det(-t A_n^T)=(-t)^{5n}\det(A_n)=(-t)^{5n}.
$
Thus the highest degree of $\Delta_{L_n}(t)$ is $5n$. 
This concludes Claim~\ref{second claim}.

Since the surface $\Sigma_n$ has $n$ tunnels we can apply positive tunnel-stabilization $n$ times. 

\begin{claim}\label{third claim}
After the $n$ positive tunnel-stabilizations of $W_n\in \beta_n\in B_{n+3}$ the resulting braid word, $\widetilde{W_n} \in \widetilde{\beta_n} \in B_{2n+3}$,
achieves the defect $\D(L_n)$ and the negative band number $\nb(L_n)$, 
which implies $\widetilde{W_n}$ also achieves $\chi(L_n)$ and $\SL(L_n)$. 
\end{claim}

The initial braid word $W_n = a_{1,3}^{-1}\ \gamma_1\ \gamma_2 \dots \gamma_n \in \beta_n$ contains $4$ negative bands in each $\gamma_j$ therefore $\nb(L_n)\leq \nb(\beta_n)\leq \nb(W_n)=4n+1$. 
According to Proposition~\ref{prop:key-obs} applying $n$ positive tunnel-stabilizations deletes $n$ negative bands which yields the following upper bound
$$\nb(L_n) \leq \nb(\widetilde{\beta_n}) \leq \nb(\widetilde{W_n})=4n+1- n = 3n+1.$$

Since $\beta_n$ achieves the braid index of $L_n$, the truth of the generalized Jones conjecture \cite{DynnikovPrasolov, Menasco-LaFountain, Keiko06} implies that $\beta_n$ realizes the maximal self-linking number; that is 
$$\SL (L_n) = \sel (\beta_n) = a(\beta_n) - n(\beta_n) = -(n+4),$$ 
where $a(\beta_n)$ denotes the algebraic lenth and $n(\beta_n)$ denotes the number of strands.  
Therefore
$$\D(L_n)= \tfrac{1}{2}(-\chi(L_n) -\SL(L_n)) = \tfrac{1}{2}(-(-5n+2)+(n+4))= 3n +1.$$
Since $\D(L_n) \leq \nb(L_n)$ by (\ref{eq:inequality}) we obtain 
$$\D(L_n) =\nb(L_n)=\nb(\widetilde{W_n})=3n+1.$$
This concludes Claim~\ref{third claim}, thus Theorem~\ref{prop:stabilization example}.
\end{proof}

With these examples in mind here is a refinement of Ito and Kawamuro's conjecture:

\begin{conjecture}
Suppose that a word $W$ realizes $\nb(K)$. Then $W$ also realizes $\chi(K)$ and $\SL(K)$. 
\end{conjecture}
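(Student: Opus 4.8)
The plan is to reduce the statement to a single bookkeeping identity for Bennequin surfaces, and thence to the Ito--Kawamuro equality $\D(K)=\nb(K)$. First I would record that for any $n$-braid word $W$ with $p$ positive and $q$ negative bands the associated Bennequin surface satisfies $\chi(F_W)=n-(p+q)$, while the self-linking number is $\sel(W)=(p-q)-n$. Subtracting these gives the key identity
$$\nb(W)=q=\tfrac12\bigl(-\chi(F_W)-\sel(W)\bigr),$$
so that the negative band count of $W$ is exactly the ``defect of the word'' $W$. Since $\sel(W)$ depends only on the underlying braid, this identity simultaneously ties $\nb(W)$ to both $\chi(F_W)$ and $\sel(W)$, which is what makes a statement about $\nb$ control the other two invariants at once.

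Next, suppose $W\in\beta\in K$ realizes $\nb(K)$, that is $\nb(W)=\nb(K)$. Along the chain (\ref{eq:long-Bennequin}) the two inequalities that matter are $\sel(W)\le\SL(K)$ and $\chi(F_W)\le\chi(K)$, and I would measure their slacks by setting
$$s:=\SL(K)-\sel(W)\ge 0,\qquad c:=\chi(K)-\chi(F_W)\ge 0.$$
Substituting $-\chi(F_W)=-\chi(K)+c$ and $\sel(W)=\SL(K)-s$ into the identity above yields
$$2\nb(K)=2\nb(W)=-\chi(F_W)-\sel(W)=\bigl(-\chi(K)-\SL(K)\bigr)+c+s=2\D(K)+c+s,$$
so that $c+s=2\bigl(\nb(K)-\D(K)\bigr)$. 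The desired conclusion --- that $W$ realizes both $\chi(K)$ and $\SL(K)$ --- is precisely $c=s=0$; since $c,s\ge 0$ this holds if and only if $\nb(K)=\D(K)$.

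This computation makes the logical content transparent: the reduction just performed shows that $\D(K)=\nb(K)$ implies the conjecture, and conversely, applying the conjecture to any word realizing $\nb(K)$ (one always exists) forces $c=s=0$ and hence $\nb(K)=\D(K)$. Thus the refinement is equivalent to the Ito--Kawamuro conjecture $\D(K)=\nb(K)$ of \cite{IK19}, and the only hard step is that equality. Concretely, by Theorem~\ref{lem:nb(beta)} a word realizing $\nb(K)$ is a globally shortest braid word for $K$, so the remaining task is to exhibit, for an arbitrary $K$, one shortest word whose Bennequin surface is genus-minimizing and whose self-linking is maximal. A plausible attack is to begin from a braid realizing $\chi(K)$ (which exists by Birman--Finkelstein \cite{Birman-Finkelstein}) and to apply controlled tunnel-stabilizations together with the reduction operation of Definition~\ref{def:Red}, in the spirit of the proof of Theorem~\ref{prop:stabilization example}, driving the negative band count down to $\nb(K)$ without disturbing $\chi$ or $\SL$. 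The main obstacle is exactly making this process loss-free in general, which is where I expect all the difficulty to concentrate.
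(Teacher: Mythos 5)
The statement you are addressing is presented in the paper as a \emph{conjecture}, and the authors offer no proof of it, so there is nothing of theirs to compare your argument against; the relevant question is only whether your proposal actually proves it, and it does not. Your bookkeeping is correct and worth keeping: for a word $W$ on $n$ strands with $p$ positive and $q$ negative bands one has $\chi(F_W)=n-(p+q)$ and $\sel(W)=(p-q)-n$, hence $\nb(W)=q=\tfrac{1}{2}\bigl(-\chi(F_W)-\sel(W)\bigr)$, and since $\sel(W)$ depends only on the underlying braid, introducing the slacks $c=\chi(K)-\chi(F_W)\ge 0$ and $s=\SL(K)-\sel(W)\ge 0$ for a word with $\nb(W)=\nb(K)$ gives $c+s=2\bigl(\nb(K)-\D(K)\bigr)$. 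This shows cleanly that the conjecture, for a fixed $K$, holds if and only if $\D(K)=\nb(K)$; in particular it is \emph{equivalent} to, not merely implied by, Conjecture 1 of \cite{IK19}. That is a sharper relationship than the paper's word ``refinement'' suggests, and it is the genuinely useful content of your write-up. (As a byproduct your identity also gives $\D(\beta)=\nb(\beta)$ for every braid $\beta$, which is consistent with, and slightly sharpens, the diamond of inequalities displayed before the conjecture.)

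The genuine gap is that the equality $\D(K)=\nb(K)$ is itself an open conjecture, so your argument terminates in an unproven statement rather than in a proof. The closing paragraph --- start from a braid realizing $\chi(K)$ via Birman--Finkelstein and apply tunnel-stabilizations and reductions ``loss-free'' until the negative band count reaches $\nb(K)$ --- is only a heuristic: neither the paper nor your proposal gives any mechanism guaranteeing that such a loss-free process exists, and Theorem~\ref{prop:stabilization example} itself shows that braid representatives that are extremal for one quantity (there, braid index) can be arbitrarily far from realizing $\nb(K)$, so the process would have to navigate non-minimal representatives in a way you have not controlled. As submitted, the proposal is a correct and illuminating equivalence, but not a proof of the conjecture, and you should present it as such.
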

\begin{remark}\label{final-remark}
Recall that each tunnel admits both positive and negative tunnel-stabilization. 
In Claim~\ref{third claim} 
we applied positive tunnel-stabilizations $n$ times to the Seifert surface $\Sigma_n$ which has $n$ tunnels. 
Instead, we can also apply negative tunnel-stabilizations $n$ times to $\Sigma_n$ and call the resulting braid $\widetilde{W_n}'$. 
With some computation, we see $\widetilde{W_n}'$ realizes $\chi(L_n)$, but 
it does not realize either $\nb(L_n)$ or $\SL(L_n)$. 

Therefore, for a link $L$ and a braid word $W \in L$, the condition $\chi(F_W)=\chi(L)$ does not imply $\nb(W)=\nb(L)$ in general.  
However, in case of SQP links (i.e., $\nb(L)=0$), it is still open whether $\chi(F_W)=\chi(L)$ implies $\nb(W)=0$. 
\end{remark}

\nocite{*}
\bibliographystyle{amsplain}
\bibliography{bib.bib}
\end{document}